\theoremstyle{plain}
\theoremstyle{definition}
\newtheorem{definition}{Definition}
\newtheorem{theorem}{Theorem}
\newtheorem{lemma}{Lemma}
\newtheorem{corollary}{Corollary}
\newtheorem{proposition}{Proposition}
\newtheorem{remark}{Remark}
\newtheorem*{ack}{Acknowledgements}
\newcommand{\foot}[1]{\marginpar{\tiny\raggedright #1}}
\newcommand{\az}[1]{\foot{AZ: #1}}
\begin{document}

\title[Statistical properties of Lorentz gases on aperiodic tilings, Part 1]
      {Statistical properties of Lorentz gases on aperiodic tilings, Part 1}
      \author{Rodrigo Trevi\~no}
      \address{University of Maryland, College Park}
      \email{rodrigo@umd.edu}
      \author{Agnieszka Zelerowicz}
      \address{University of Maryland, College Park}
      \email{azelerow@umd.edu}
      \date{\today}
      \begin{abstract}
        We consider the Lorentz gas model of category A (that is, with no corners and of finite horizon) on aperiodic repetitive tilings of $\mathbb{R}^2$ of finite local complexity. We show that the compact factor of the collision map has the K property, from which we derive mixing for pattern-equivariant functions as well as the planar ergodicity of the Lorentz gas flow.
      \end{abstract}
      \maketitle
      \section{Introduction and statement of results}
      The Lorentz gas was originally introduced as a model for the movement of electrons in crystals. Throughout most of the 20th century,
the term crystal referred to a solid with a periodic atomic structure.
Following Shechtman's discovery of quasicrystals, there is now consensus \cite{lifshitz:what} that crystal is any substrate whose atomic structure has long range order, which includes not only materials whose atomic structure is periodic but also quasicrystals. These are materials whose atomic structure is not modeled by any periodic structure but that nonetheless exhibits long range order.

      
      The study of Lorentz gases has a long history and rich theory and has been the motivation for the development of many tools in ergodic theory and mathematical physics. The list of surveys covering the history and development is long; here we just mention some: \cite{CM:book, szasz:challenges, golse:survey, dettmann:survey}. Beyond periodic media, statistical properties of the Lorentz gases for random configurations of scatterers have also been obtained (see e.g. \cite{GL:recurrenceTubes, AL:random}). One of the biggests gaps in our knowledge of Lorentz gases is the case of aperiodic media of low complexity, as stated in \cite{szasz:challenges, marklof:ICM}. The only rigorous work to date is the work of Marklof-Str\"ombergsson \cite{MS:quasi}, which derives limit theorems for the so-called Boltzman-grad limit for quasicrystals, that is, studies of the low density limit in aperiodic media with long range interactions. 

      This is the first paper in a program to study the statistical properties of Lorentz gases in general aperiodic repetitive media of low complexity which aims to close this gap. Our goal is to be broad in the sense of not assuming much beyond aperiodicity, repetitivity and finite local complexity. In particular we do not assume that the medium has long-range order (i.e. is not necessarily a quasicrystal). In this paper we further assume that the Lorentz gas 
belongs to category A in the classification of Chernov-Markarian \cite{CM:book}, meaning that it has finite horizon and no corners.
The assumption on finite horizon indicates that we treat cases far from the zero-density limit and therefore our setting is different from \cite{MS:quasi}. 
The novelty of our approach is the use of tiling spaces to obtain a compact model of the aperiodic Lorentz gas on the plane.
In the case of a periodic Lorentz gas the corresponding compact model is given by a Sinai billiard and is a uniformly hyperbolic diffeomorphism with singularities.  
Unlike Sinai billiards, 
the compact model we obtain by using tiling spaces is not defined on a manifold (our space locally is a product of a Euclidean ball and a Cantor set), hence cannot be thought of as a diffeomorphism. On the other hand, our model exhibits essential features common to partially hyperbolic systems.
This makes the problem qualitatively different from the periodic case.
      \subsection{Statement of results}
      Before stating our results, we describe the precise setting in which we work. A \textbf{tiling} $\mathcal{T}$ of $\mathbb{R}^d$ is a collection of compact, connected sets, called \textbf{tiles}, such that their union covers $\mathbb{R}^d$ and such that any two distinct tiles in the tiling may only intersect at their boundaries (see \S \ref{sec:background} for background on tilings). We refer to a finite union $\mathcal{P}$ of tiles of $\mathcal{T}$ as a \textbf{patch} of $\mathcal{T}$ (see Figure \ref{fig:HalfHexesPatch} for an example). A tiling is \textbf{repetitive} if given any patch $\mathcal{P}$ of $\mathcal{T}$ there exists an $R>0$ such that any ball of radius $R$ in $\mathbb{R}^d$ contains a copy of $\mathcal{P}$ in the portion of $\mathcal{T}$ inside this ball. It has \textbf{finite local complexity} if for any $R>0$ there is a finite list of patches of diameter at most $R$ that occur in the tiling.

      Given a repetitive, aperiodic tiling $\mathcal{T}$ of finite local complexity, there is a compact metric space $\Omega$ called the \textbf{tiling space of $\mathcal{T}$} which contains all tilings which are locally indistinguishable from translations of $\mathcal{T}$. Assuming that $\mathcal{T}$ is repetitive and has finite local complexity, it is one of the continuum of tilings in $\Omega$. The space $\Omega$ is not a manifold but has the local product structure of an open Euclidean ball and a Cantor set. There is a minimal action of $\mathbb{R}^d$ on this space, making $\Omega$ a foliated space, foliated by orbits of this action.
      
\begin{figure}[t]
  \centering
  \includegraphics[width = 5.5in]{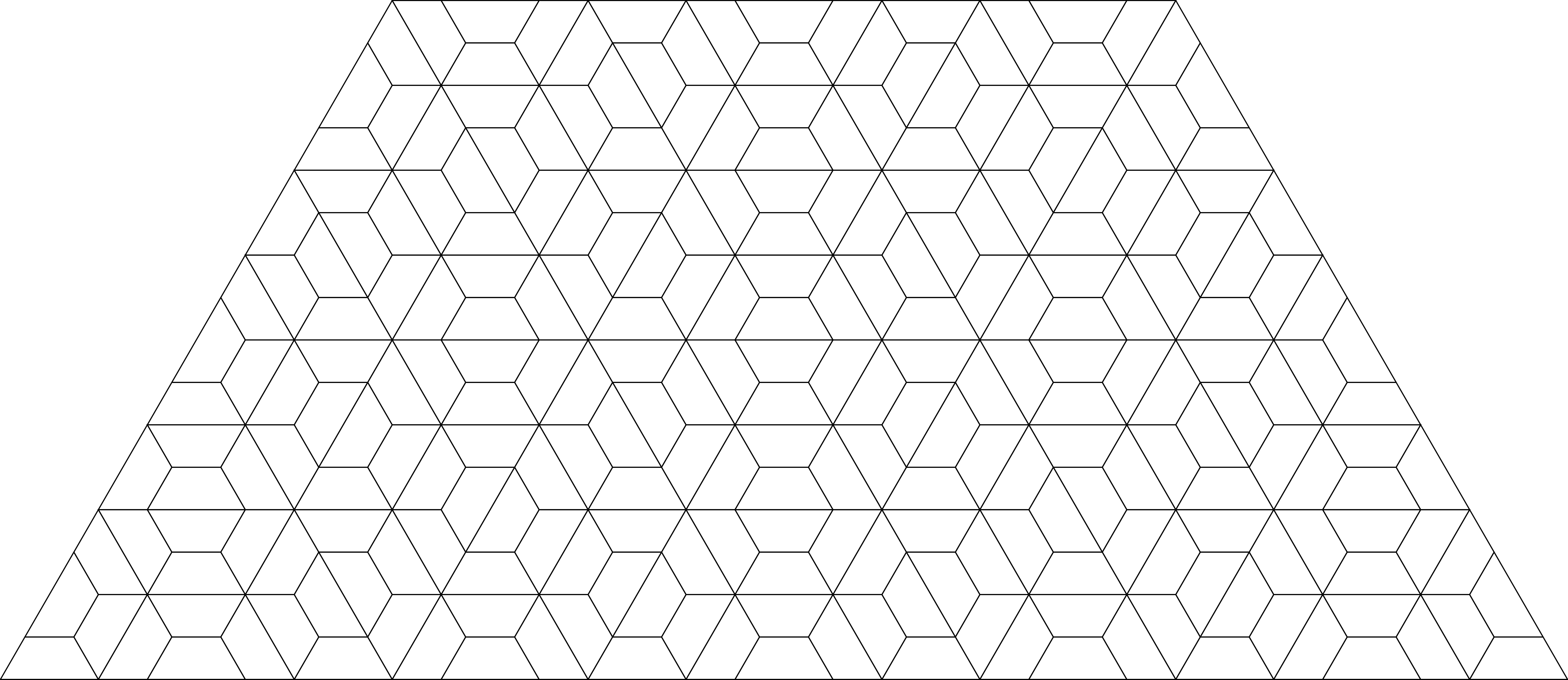}
  \caption{A patch of the half-hex aperiodic tiling.}
  \label{fig:HalfHexesPatch}
\end{figure}
      
If $\mathcal{T}$ is a tiling of $\mathbb{R}^d$, a function $g:\mathbb{R}^d\rightarrow \mathbb{R}$ is \textbf{$\mathcal{T}$-equivariant} (or pattern-equivariant) if there exists an $R>0$ such that if for some $x_1,x_2\in\mathbb{R}^d$, the largest patches $\mathcal{P}_1$, $\mathcal{P}_2$ contained in $B_R(x_1)$ and $B_R(x_2)$, respectively, are translation equivalent (that is, $\mathcal{P}_1$ and $\mathcal{P}_2$ are translated copies of each other), then $g(x_1) = g(x_2)$. In other words, the value of $g(x)$ depends only on the local pattern around $x$. 
More generally, if $X$ is any set and $\mathcal{T}$ is a tiling of $\mathbb{R}^d$, a function $g:\mathbb{R}^d\times X\rightarrow \mathbb{R}$ is said to be \textbf{$\mathcal{T}$-equivariant} if 
for any $z\in X$ the function $g_z(x):=g(x,z)$ is $\mathcal{T}$-equivariant.

Pattern equivariant functions are in a sense the most natural class of functions to consider because they are closely related to functions on the tiling space: a function on $\mathbb{R}^d$ is $\mathcal{T}$ equivariant if and only if there is a transversally locally constant (see \S \ref{subsec:fun} for more details) function $h:\Omega\rightarrow \mathbb{R}$ such that $g(v) = h\circ \varphi_v(\mathcal{T})$, where $\varphi$ is the translation action on the tiling space $\Omega$.

     For a collection $\mathcal{S}\subset \mathbb{R}^d$ of topological balls with $C^3$ boundaries, 
called \textbf{scatterers}, the Lorentz gas is the system defined for initial conditions in $(\mathbb{R}^d\setminus \overline{\mathcal{S}})\times S^{d-1}_+$ as the free flight of a particle in $\mathbb{R}^d\setminus \overline{\mathcal{S}}$ in the direction determined by a point in $S^{d-1}_+$ and bouncing elastically off $\partial \mathcal{S}$. This system has \textbf{finite horizon} if there is a global constant $M>0$ such that the time between any two collisions with $\partial \mathcal{S}$ is bounded above by $M$;
it has \textbf{no corners} if any two scatterers are separated;
 it is \textbf{dispersive} if every connected component of $\overline{\mathcal{S}}$ (that is, a specific scatterer) is strictly convex.
We refer to a collection of dispersive scatterers (and to the corresponding Lorentz gas) of finite horizon and with no corners as being of \textbf{category A}.
     
Given a tiling $\mathcal{T}$ of $\mathbb{R}^d$, a collection of scatters $\mathcal{S}$ is \textbf{$\mathcal{T}$-equivariant} if there exists an $R>0$ such that if $x_1,x_2\in\mathbb{R}^d$ are the centers of mass of two scatterers $S_1,S_2\subset \mathcal{S}$ and the largest patches $\mathcal{P}_1$, $\mathcal{P}_2$ contained in $B_R(x_1)$ and $B_R(x_2)$, respectively, are translation equivalent, then the scatterers $S_1$ and $S_2$ are translation equivalent. In other words, the geometry of a scatterer and relative position of scatterers surrounding it in a $\mathcal{T}$-equivariant collection of scatterers depends only on the local pattern around it. An example of this would be the collection of scaterrers centered at the vertex of every tile in a tiling. Another example is depicted in Figure \ref{fig:HalfHexes}.
Given a $\mathcal{T}$-equivariant collection of scatterers $\mathcal{S}\subset\mathbb{R}^d$, a function $g:\partial \mathcal{S}\rightarrow \mathbb{R}$ is $\mathcal{T}$-equivariant if it is the restriction to $\partial \mathcal{S}$ of a $\mathcal{T}$-equivariant function on $\mathbb{R}^d$.
     
\begin{figure}[t]
  \centering
  \includegraphics[width = 6.5in]{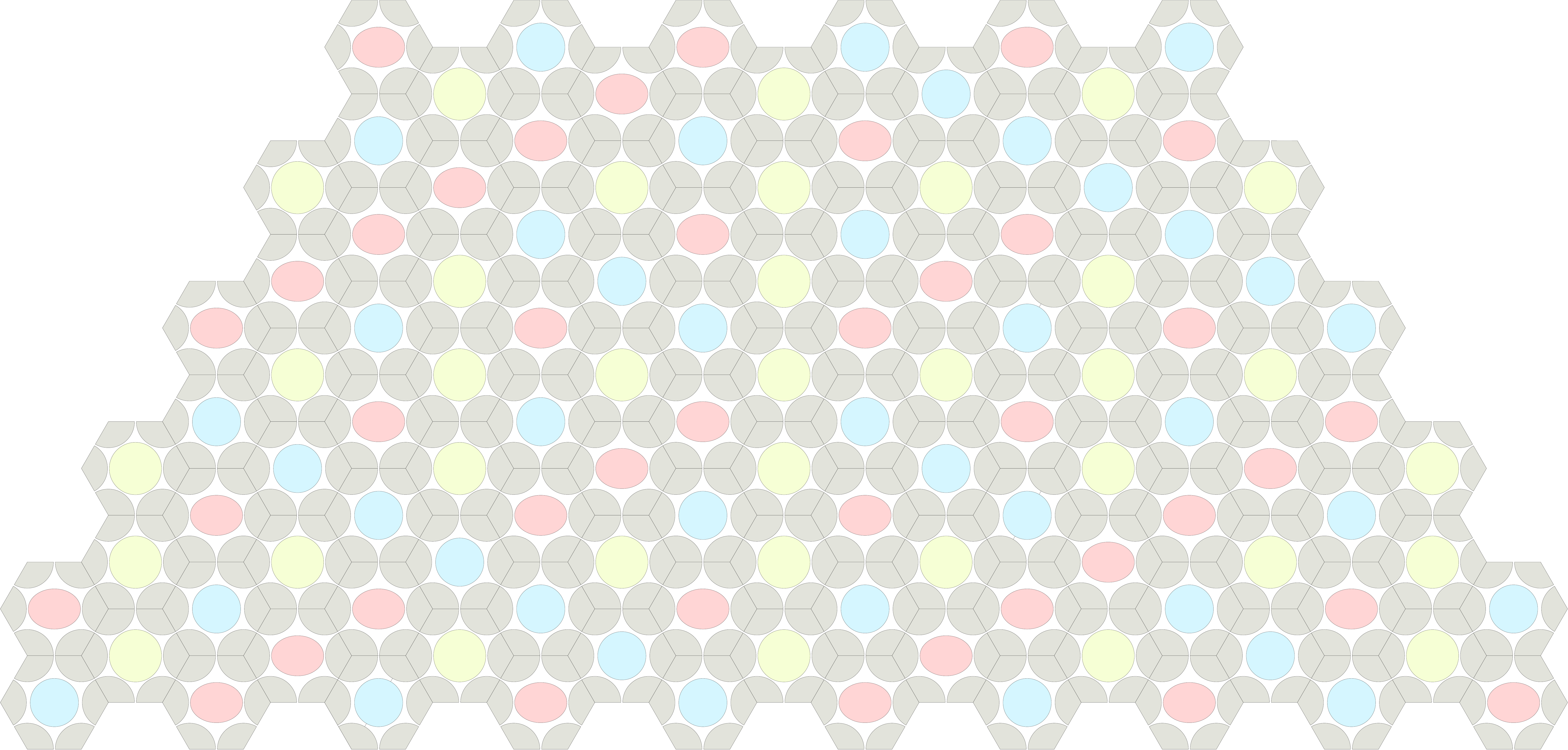}
  \caption{An aperiodic Lorentz gas configuration. The scatterers depend only on local configurations of the half-hex tiling in Figure \ref{fig:HalfHexesPatch}.}
  \label{fig:HalfHexes}
\end{figure}

Tilings as coverings of $\mathbb{R}^d$ have their discrete counterpart called \textbf{Delone multisets}. Roughly speaking, if one thinks of a tiling as a set of tiles, one can think of the corresponding Delone multiset as a set of centers of masses of those tiles.
We give a precise definition of a Delone multiset and related notions in Section \ref{subsec:delone}. Even though we state our main results in the language of tilings, it is sometimes convenient to think of the corresponding Delone multisets instead.
In particular, given a Delone multiset one can easily define the collection of scatterers by taking each point in a Delone multiset as a center of mass of a scatterer (see Section \ref{subsec:delonescat}). 
     
      Given a repetitive, aperiodic tiling $\mathcal{T}$ of finite local complexity and a $\mathcal{T}$-equivariant collection of scatterers $\mathcal{S}$, we can take $\partial\mathcal{S}\times S^{d-1}_+$ as a Poincar\'e section for the Lorentz gas. This set can be given a metric topology which naturally identifies it as a subset of $\mathcal{D} = \partial\Sigma\times S^{d-1}_+\subset \Omega\times S^{d-1}_+$. The set $\mathcal{D}$ is then the Poincar\'e section to the continuum of Lorentz gases defined in all tilings in $\Omega$ as long as we use the same local dependence on scatterers which defined $\mathcal{S}$. The set $\mathcal{D}$ is a compact metric space where a global Poincar\'e map $f:\mathcal{D}\rightarrow \mathcal{D}$ is defined. Given any $\mathbb{R}^d$-invariant measure on $\Omega$, there is a naturally derived measure $\bar{\mu}$ on $\mathcal{D}$ which is $f$-invariant. Our first result is for the planar Lorentz gas on aperiodic tilings (see Theorem \ref{thm:Kproperty} in \S \ref{sec:K}).

 \begin{theorem}
        \label{thm:K}
        Let $\mathcal{T}$ be a repetitive, aperiodic tiling of $\mathbb{R}^2$ of finite local complexity, $\mathcal{S}$ be a $\mathcal{T}$-equivariant collection of dispersive scatterers with $C^3$ boundary
defining a Lorentz gas of category A,
 and $\mu$ an $\mathbb{R}^2$-invariant ergodic probability measure on $\Omega_\mathcal{T}$. Then the map $f : \mathcal{D} \to \mathcal{D}$ has the K property with respect to $\bar{\mu}$.
      \end{theorem}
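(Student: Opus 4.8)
The plan is to adapt the Sinai--Chernov theory of dispersing billiards \cite{CM:book} to the laminated compact model $(\mathcal D,f,\bar\mu)$. Recall that $\mathcal D$ is locally a product of a Euclidean disc and a Cantor set and that $f$ preserves the lamination: a leaf is the billiard Poincar\'e section of a single table $\mathbb R^2\setminus\mathcal S(T)$ with $T\in\Omega$, the restriction of $f$ to that leaf is the collision map of the corresponding noncompact aperiodic dispersing billiard, and the transverse Cantor direction is neutral. All hyperbolic structure therefore lives in the leafwise directions, and the transverse structure is controlled only through the hypothesis that $\mu$ is $\mathbb R^2$-ergodic. The first step is to set up uniform leafwise hyperbolicity. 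Because $\mathcal S$ is $\mathcal T$-equivariant and $\mathcal T$ has finite local complexity, only finitely many local scatterer configurations occur, so the curvature of $\partial\mathcal S$ is bounded above and below, and by the category A hypothesis the free path is bounded above (finite horizon) and below (no corners). Consequently the invariant stable and unstable cone fields, the expansion estimates for unstable curves, and the distortion bounds can all be constructed with constants uniform over $\mathcal D$, yielding for $\bar\mu$-a.e.\ $x$ one-dimensional leafwise stable and unstable manifolds $W^s(x),W^u(x)$ of uniformly positive conditional length and nonzero leafwise Lyapunov exponents.

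The second step is the analogue of the Sinai--Chernov Fundamental Theorem. The leafwise stable/unstable holonomies are absolutely continuous as in the classical case; the transverse Cantor coordinate is simply carried along, since the transverse measure induced by $\mu$ is translation-invariant and the holonomies act within leaves, so the transverse Jacobians are trivial and the local measure estimates reduce to leafwise ones. Category A is again decisive in controlling the singularity set of $f$: with no corners and finite horizon the only singularities are grazing collisions, so this set is codimension one, is assembled from finitely many local pieces by finite local complexity, has at most linearly growing complexity as in the finite-horizon case, and is automatically aligned with the stable/unstable cones. Running the Hopf chain argument leafwise then gives local ergodicity: $\bar\mu$-a.e.\ point has a neighborhood that lies, modulo $\bar\mu$, in a single ergodic component of $f$ restricted to its leaf; chaining this along the connected leaf yields that $f$ restricted to $\bar\mu$-a.e.\ leaf is ergodic for the leaf measure.

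The third step assembles the global conclusion. Ergodicity of $f$ with respect to $\bar\mu$ follows at once: any $f$-invariant set is, modulo $\bar\mu$, saturated by leaves by the leafwise ergodicity just obtained, and a leaf-saturated measurable subset of $\mathcal D$ is null or conull precisely because $\mu$ is $\mathbb R^2$-ergodic. For the K property one upgrades this argument to the Pinsker $\sigma$-algebra $\Pi(f)$: as for any measure-preserving system carrying a hyperbolic structure, $\Pi(f)$ is, modulo $\bar\mu$, saturated both by leafwise stable manifolds and by leafwise unstable manifolds, so by the leafwise local ergodicity and absolute continuity of the second step it is trivial on $\bar\mu$-a.e.\ leaf; hence $\Pi(f)$ consists of leaf-saturated sets, which by the $\mathbb R^2$-ergodicity of $\mu$ are null or conull. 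Therefore $\Pi(f)$ is trivial and $f$ has the K property with respect to $\bar\mu$.

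I expect the main obstacle to be the Fundamental Theorem step in the laminated setting: one must verify that the density-point and integration arguments underlying the Sinai--Chernov machinery survive when the phase space is a foliated space rather than a manifold, that the disintegrations of $\bar\mu$ along leafwise stable and unstable manifolds behave exactly as in the classical case (which is where translation-invariance of the transverse measure is used), and that local ergodic components can be chained along a noncompact aperiodic leaf with all estimates kept uniform by finite local complexity. The $\mathbb R^2$-ergodicity of $\mu$ is needed only at the end, but it is exactly what eliminates the potential zero-entropy factor consisting of leaf-saturated sets, which is the feature that makes the problem genuinely different from the periodic case and that would otherwise obstruct the K property.
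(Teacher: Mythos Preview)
Your overall architecture matches the paper's: uniform leafwise hyperbolicity and a leafwise Fundamental Theorem (the paper's \S\ref{sec:womanifolds}), then the Pinsker $\sigma$-algebra is saturated by leafwise stable and unstable manifolds, hence leaf-saturated, hence trivial by $\mathbb R^2$-ergodicity of $\mu$. There is, however, one genuine gap.

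A leaf of $\mathcal D$ --- the collision space $\partial\mathcal S_{\mathcal T}\times[-\pi/2,\pi/2]$ of a single tiling --- is \emph{not} connected: it is a countable disjoint union of components $\partial S\times[-\pi/2,\pi/2]$, one per scatterer. So ``chaining along the connected leaf'' is not literally available. Stable/unstable saturation plus the Hopf-chain argument (the paper's Theorem~\ref{thm:local}) only shows that each \emph{connected component} lies in a single Pinsker atom (Proposition~\ref{prop:step1}). For \emph{ergodic components} the passage to full leaves is then easy, since ergodic components are $f$-invariant sets and $f$ moves between scatterers. For the Pinsker partition this simple chaining fails: one has only $f(\Pi_f(x))=\Pi_f(f(x))$, so from $f(\mathcal L)\cap\mathcal L'\neq\varnothing$ one gets $\mathcal L'\subset f(\Pi_f(\mathcal L))$, which need not equal $\Pi_f(\mathcal L)$. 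The paper supplies the missing step via a three-scatterer lemma (Lemma~\ref{lem:3piece}, taken from \cite[Lemma~6.23]{CM:book}): for any component $\mathcal L$ there exist components $\mathcal L_1,\mathcal L_2$ with $f(\mathcal L)\cap\mathcal L_i$, $f(\mathcal L_1)\cap\mathcal L_2$, and $f(\mathcal L_1)\cap\mathcal L$ all of positive measure, and a short combinatorial manipulation then forces $\Pi_f(\mathcal L)=\Pi_f(f(\mathcal L))$ (Proposition~\ref{prop:invariance}). Only after this can one chain across the disconnected leaf and conclude that Pinsker atoms are leaf-saturated. Your closing paragraph flags ``chaining along a noncompact aperiodic leaf'' as a potential obstacle; this is precisely its content, and it is not handled by the argument in your Step~3.
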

      \begin{remark}
        Theorem \ref{thm:K} brings an entire new class of examples of K systems to the theory of dynamical systems. We do not know under which conditions these systems are Bernoulli. We are currently pursuing this question. 
      \end{remark}
      We briefly outline the proof of this theorem. 
We consider measurable partitions of $\mathcal{D}$ into local stable and unstable manifolds respectively.
The celebrated fundamental theorem of Sinai 
for dispersive planar billiards holds in our case and provides a good control over the sizes of local stable and unstable manifolds. 
Following arguments identical to the proof of local ergodicity for periodic Sinai billiards, we obtain that Lebesgue-almost every two points on the collision space of a single scatterer can be connected by a stable-untable path.
It follows that 
$\bar{\mu}$-almost every connected component (a single scatterer along with all of its incoming/outgoing collisions) is fully contained (up to a subset of Lebesgue measure zero) in a single element of the Pinsker partition. 
Using the (minimal) foliated structure of the tiling space and the dynamics of $f$, it will follow that a dense union of connected components is contained in an element of the Pinsker partition. Finally, using again the structure of the tiling space and the ergodicity of $\mu$ (with respect to the translation $\mathbb{R}^2$-action) we will show that the Pinsker partition is trivial.
            
      From this mixing result on the compact system $(f, \mathcal{D},\bar{\mu})$, we obtain a mixing result for planar Lorentz gases of category A.
That is for a  $\mathcal{T}$-equivariant collection of scatterers $\mathcal{S} \subset \mathbb{R}^2$,  $\theta\in (-\pi/2,\pi/2)$, and $n\in\mathbb{N}$
we denote by
    $F^n_\theta:\partial\mathcal{S}\rightarrow \partial\mathcal{S}$ the map given by the first $n$ colisions of the planar Lorentz gas with initial angle $\theta\in (-\pi/2,\pi/2)$. 
The map $F^n_\theta$ is not defined on all of $\partial\mathcal{S}\times (-\pi/2,\pi/2)$ due to discontinuities, but this is a set of Lebesgue measure zero for all $n\in\mathbb{N}$.

 Let $g:\partial \mathcal{S}\times (-\pi/2,\pi/2)\rightarrow \mathbb{R}$ be a locally integrable, $\mathcal{T}$-equivariant function. 
We define the \textbf{planar average} of $g$ as the following limit 
      $$\beta(g):= \lim_{T\rightarrow \infty}\frac{1}{\mbox{Vol}(B_T)}\int_{-\pi/2}^{\pi/2}\int_{B_T(z)\cap \partial \mathcal{S}} g\, d\rho\, \cos\theta\, d\theta,$$
where $\rho$ denotes the arc-length and $\theta \in  (-\pi/2,\pi/2)$ is the angle. 
By Lemma \ref{lem:bump}, for every locally integrable, $\mathcal{T}$-equivariant function $g$ the above limit exists and does not depend on $z$.
We denote the set of $\mathcal{T}$-equivariant, locally integrable functions $g:\partial \mathcal{S}\times (-\pi/2,\pi/2)\rightarrow \mathbb{R}$ by $\Delta_{\mathcal{T}}(\mathcal{S})$.\\

   Let $\Omega$ be the tiling space of an aperiodic, repetitive tiling $\mathcal{T}'$ of $\mathbb{R}^2$ of finite local complexity, and let $\mu$ be 
   an invariant ergodic probability measure for the $\mathbb{R}^2$ action. Let $\partial\Sigma\subset \Omega$ be a transversally locally constant set which defines for every $\mathcal{T}\in\Omega$ a $\mathcal{T}$-equivariant collection $\mathcal{S}_\mathcal{T}\subset \mathbb{R}^2$ of dispersive scatterers of Category A with $C^3$ boundary.

   \begin{theorem}[Planar averaged mixing; see Theorem \ref{thm:generalMixing} in \S\ref{sec:mixing}]
     \label{thm:mixing1}
     For $\mu$-almost  every $\mathcal{T}\in\Omega$, for any two locally integrable, $\mathcal{T}$-equivariant functions $g_1,g_2:\partial \mathcal{S}\times (-\pi/2,\pi/2)\rightarrow \mathbb{R}$ there exist transversally locally constant functions $h_1,h_2:\mathcal{D}\rightarrow \mathbb{R}$ such that for any $n\in\mathbb{N}$ and all $T$ large enough (depending on $n$), we have that for any $x\in\mathbb{R}^2$
     \begin{equation}
       \label{eqn:ApproxleafMixing1}
       \int_{-\pi/2}^{\pi/2} \int_{B_T(x)\cap \partial\mathcal{S}_\mathcal{T}} g_1\circ F^n_\theta\cdot g_2 \, d\rho\, \cos\theta\, d\theta = \mathrm{Vol}(B_T)\langle h_1\circ f^n,h_2\rangle_{L^2_{\bar{\mu}}(\mathcal{D})} +  o_n(T^2),
     \end{equation}
     from which we obtain the convergence
     \begin{equation}
       \label{eqn:leafMixing1}
       \lim_{n\rightarrow \infty} \lim_{T\rightarrow \infty}\frac{1}{\mathrm{Vol}(B_T)}\int_{-\pi/2}^{\pi/2} \int_{B_T(x)\cap \partial\mathcal{S}_\mathcal{T}} g_1\circ F_\theta^n\cdot g_2\, d\rho\, \cos\theta\, d\theta = \beta(g_1)\beta(g_2).
     \end{equation}
   \end{theorem}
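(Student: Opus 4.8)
The plan is to transfer the statement to the compact model $(\mathcal{D},f,\bar\mu)$, where Theorem \ref{thm:K} is available, and to read the planar integrals as ergodic averages governed by Lemma \ref{lem:bump}. Concretely, one fixes a developing map $p_\mathcal{T}\colon\partial\mathcal{S}_\mathcal{T}\times(-\pi/2,\pi/2)\to\mathcal{D}$ sending a collision datum $(x,\theta)$ to the tiling $\mathcal{T}$ re-centred so that the scatterer through $x$ occupies the marked position, together with the suitably recorded angle; by construction $p_\mathcal{T}$ intertwines the planar collision dynamics with the global collision map, $p_\mathcal{T}\circ F^n_\theta=f^n\circ p_\mathcal{T}$, on the full-measure set where $F^n_\theta$ is defined, precisely because $\mathcal{S}_\mathcal{T}$ is $\mathcal{T}$-equivariant, so the next collision in $\mathbb{R}^2$ and the action of $f$ on $\mathcal{D}$ see the same local configuration of scatterers. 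Under this identification a locally integrable $\mathcal{T}$-equivariant $g$ is exactly $h\circ p_\mathcal{T}$ for a transversally locally constant $h\in L^1_{\bar\mu}(\mathcal{D})$: the pointwise equivariance condition forces transversal local constancy, and integrability along the orbit direction corresponds to $L^1$ with respect to the billiard measure on a single scatterer. Truncating, $g^{(k)}:=\max(-k,\min(k,g))$ is again $\mathcal{T}$-equivariant and bounded with $g^{(k)}\to g$ in $L^1_{\mathrm{loc}}$, and applying Lemma \ref{lem:bump} to $|g^{(k)}-g|$ controls the normalised planar averages $\mathrm{Vol}(B_T)^{-1}\int\int_{B_T\cap\partial\mathcal{S}}|g^{(k)}-g|\,d\rho\cos\theta\,d\theta$, uniformly in $T$, by $\beta(|g^{(k)}-g|)=\|h^{(k)}-h\|_{L^1_{\bar\mu}}\to0$. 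Hence it suffices to prove the theorem for bounded $\mathcal{T}$-equivariant $g_1,g_2$, i.e.\ bounded transversally locally constant $h_1,h_2$, and to obtain the general case of \eqref{eqn:leafMixing1} by passing to the limit, with \eqref{eqn:ApproxleafMixing1} read for the bounded approximants, where the $L^2_{\bar\mu}$-pairing is unambiguous.

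Next, fix $n\in\mathbb{N}$. The set $\mathcal{R}_n\subset\mathcal{D}$ of discontinuities of $f^n$ is a finite union of curves, hence $\bar\mu$-null, and off $\mathcal{R}_n$ the function $h_1\circ f^n$ is transversally locally constant with some larger locality radius $R_n$, since the local pattern around $f^n(p)$ is determined by a bounded patch around $p$ together with the dynamics, which itself depends only on that patch. Therefore $\Psi_n:=(h_1\circ f^n)\cdot h_2$ is bounded and transversally locally constant off a null set, and $(g_1\circ F^n_\theta)\cdot g_2=\Psi_n\circ p_\mathcal{T}$ is a bounded, locally integrable, $\mathcal{T}$-equivariant function on $\partial\mathcal{S}_\mathcal{T}\times(-\pi/2,\pi/2)$, defined off a Lebesgue-null set. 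Applying Lemma \ref{lem:bump} to it, the planar average $\beta\big((g_1\circ F^n_\theta)g_2\big)$ exists, is independent of the centre, and — using that its proof identifies $\beta(\cdot)$ as the corresponding $\bar\mu$-integral via the pointwise (Wiener) ergodic theorem for the $\mathbb{R}^2$-action along Euclidean balls — equals $\int_\mathcal{D}\Psi_n\,d\bar\mu=\langle h_1\circ f^n,h_2\rangle_{L^2_{\bar\mu}(\mathcal{D})}$ for $\mu$-almost every $\mathcal{T}$. Unwinding the definition of $\beta$ gives \eqref{eqn:ApproxleafMixing1}: the error is $\mathrm{Vol}(B_T)$ times the discrepancy between the finite-$T$ average and its limit, which tends to $0$ but at a rate governed by $R_n$ and $\mathcal{R}_n$, hence is $o_n(T^2)$ and requires $T$ large depending on $n$.

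Finally, divide \eqref{eqn:ApproxleafMixing1} by $\mathrm{Vol}(B_T)$ and let $T\to\infty$; this kills the $o_n(T^2)$ term and yields, for each fixed $n$, that the normalised planar double integral converges to $\langle h_1\circ f^n,h_2\rangle_{L^2_{\bar\mu}}$. Now let $n\to\infty$: by Theorem \ref{thm:K} the map $f$ has the K property with respect to $\bar\mu$, hence is mixing, so $\langle h_1\circ f^n,h_2\rangle_{L^2_{\bar\mu}}\to\big(\int_\mathcal{D}h_1\,d\bar\mu\big)\big(\int_\mathcal{D}h_2\,d\bar\mu\big)$, and by Lemma \ref{lem:bump} in the case of a single function the factors are $\int_\mathcal{D}h_i\,d\bar\mu=\beta(g_i)$; this is \eqref{eqn:leafMixing1}. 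The almost-everywhere statement in $\mathcal{T}$ survives because only a countable intersection is taken: over $n\in\mathbb{N}$ and over a countable family of bounded transversally locally constant functions dense in $L^1_{\bar\mu}(\mathcal{D})$, from which arbitrary $g_1,g_2$ are recovered by the truncation-and-approximation step above (the relevant seminorm on the planar side being precisely $\|\cdot\|_{L^1_{\bar\mu}}$ on the tiling-space side, again by Lemma \ref{lem:bump}).

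I expect the main obstacle to be the structural step of the second paragraph: establishing $p_\mathcal{T}\circ F^n_\theta=f^n\circ p_\mathcal{T}$ on a full-measure set and, above all, controlling how the transversal-locality radius $R_n$ of $h_1\circ f^n$ and the singularity set $\mathcal{R}_n$ of $f^n$ grow with $n$. This is what forces the two limits in \eqref{eqn:leafMixing1} to be iterated rather than joint, and what makes ``$T$ large depending on $n$'' unavoidable. Once that control is in place — so that Lemma \ref{lem:bump} genuinely applies to $(g_1\circ F^n_\theta)\cdot g_2$ for each fixed $n$ — the remaining dynamical input is just mixing, which is immediate from Theorem \ref{thm:K}, and the passage from bounded to locally integrable $g_i$ is routine given Lemma \ref{lem:bump}.
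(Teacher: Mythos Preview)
Your proposal is correct and follows essentially the same approach as the paper: lift $g_i$ to transversally locally constant $h_i$ via Lemma~\ref{lem:bump}, observe that $(g_1\circ F^n_\theta)\cdot g_2$ is itself $\mathcal{T}$-equivariant with associated function $(h_1\circ f^n)\cdot h_2$ on $\mathcal{D}$, apply Lemma~\ref{lem:bump} again to identify its planar average with $\langle h_1\circ f^n,h_2\rangle_{L^2_{\bar\mu}(\mathcal{D})}$, and then invoke the K property for mixing. The paper's argument is terser---it neither truncates to bounded observables nor spells out the countable-intersection bookkeeping for the $\mu$-a.e.\ statement---but the core route is identical, and your added care addresses integrability issues the paper leaves implicit.
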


   If the action of $\mathbb{R}^2$ on $\Omega$ is uniquely ergodic, the above result can be strengthen to obtain \eqref{eqn:ApproxleafMixing1} and \eqref{eqn:leafMixing1} for every, and not just $\mu$-almost every $\mathcal{T}\in\Omega$. 
   Unique ergodicity  of the $\mathbb{R}^2$- action on the tiling space $\Omega_{\mathcal{T}'}$ generated by $\mathcal{T}'$  is equivalent to the tiling $\mathcal{T}'$ having \textbf{uniform patch frequency}. That is, the asymptotic frequency (\ref{eqn:frequency}) of a given patch in $\mathcal{T}$
being the same for every tiling $\mathcal{T}  \in \Omega_{\mathcal{T}'}$.

\begin{corollary}
   Let $\Omega$ be the tiling space of an aperiodic, repetitive tiling $\mathcal{T}'$ of $\mathbb{R}^2$ of finite local complexity and uniform patch frequency, and let $\mu$ be the unique measure invariant under the $\mathbb{R}^2$ action. Let $\partial\Sigma\subset \Omega$ be a transversally locally constant set which defines for every $\mathcal{T}\in\Omega$ a $\mathcal{T}$-equivariant collection $\mathcal{S}_\mathcal{T}\subset \mathbb{R}^2$ of dispersive scatterers of category A with $C^3$ boundary.
        
        For \textbf{every} $\mathcal{T}\in\Omega$, for any two locally integrable, $\mathcal{T}$-equivariant functions $g_1,g_2:\mathbb{R}^2\times (-\pi/2,\pi/2)\rightarrow \mathbb{R}$ there exist transversally locally constant functions $h_1,h_2:\mathcal{D}\rightarrow \mathbb{R}$ such that for any $n\in\mathbb{N}$ and all $T$ large enough (depending on $n$), we have that for any $x\in\mathbb{R}^2$
        \begin{equation}
          \int_{-\pi/2}^{\pi/2} \int_{B_T(x)\cap \partial\mathcal{S}_\mathcal{T}} g_1\circ F^n_\theta\cdot g_2 \, d\rho\, \cos\theta\, d\theta = \mathrm{Vol}(B_T)\langle h_1\circ f^n,h_2\rangle_{L^2_{\bar{\mu}}(\mathcal{D})} +  o_n(T^2),
        \end{equation}
        from which we obtain the convergence
        \begin{equation}
          \lim_{n\rightarrow \infty} \lim_{T\rightarrow \infty}\frac{1}{\mathrm{Vol}(B_T)}\int_{-\pi/2}^{\pi/2} \int_{B_T(x)\cap \partial\mathcal{S}_\mathcal{T}} g_1\circ F_\theta^n\cdot g_2\, dt\, \cos\theta\, d\theta = \beta(g_1)\beta(g_2).
        \end{equation}
\end{corollary}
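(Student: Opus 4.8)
The plan is to revisit the proof of Theorem~\ref{thm:mixing1} and check that the hypothesis ``$\mu$-almost every $\mathcal{T}$'' is used there in exactly one place, namely a pointwise ergodic theorem, which the uniform patch frequency hypothesis upgrades to ``every $\mathcal{T}$''. Recall the mechanism of that proof: fixing $n$, the finite horizon assumption forces the first $n$ collisions from a scatterer to stay within a ball of radius $\approx nM+R$ about its center (with $M$ the horizon constant), so that for $T$ large depending on $n$ the integrand $g_1\circ F^n_\theta\cdot g_2$, restricted to the part of $B_T(x)\cap\partial\mathcal{S}_\mathcal{T}$ away from $\partial B_T(x)$, is $\mathcal{T}$-equivariant and corresponds to the transversally locally constant function $(h_1\circ f^n)\cdot h_2$ on $\mathcal{D}$. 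By finite local complexity there are finitely many scatterer ``types'' at radius $nM+R$; if $\mathcal{N}_P(T,x)$ denotes the number of scatterers of type $P$ centered in $B_T(x)$ and $I_P$ the (type-determined, hence $\mathcal{T}$- and $x$-independent) integral of the integrand over a single such scatterer, then the left-hand side of \eqref{eqn:ApproxleafMixing1} equals $\sum_P \mathcal{N}_P(T,x)\,I_P$ up to a remainder which, because only $O(T)$ scatterers meet $\partial B_T(x)$ and each contributes a bounded amount, is $o_n(T^2)$; crucially this whole reduction is purely geometric and combinatorial, so the $o_n(T^2)$ bound is uniform in $x\in\mathbb{R}^2$ and in $\mathcal{T}\in\Omega$, with no measure-theoretic input. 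The only measure-theoretic step in the proof of Theorem~\ref{thm:mixing1} is the assertion $\mathcal{N}_P(T,x)/\mathrm{Vol}(B_T)\to\mathrm{freq}(P)$, a frequency equal to the $\mu$-measure of the cylinder picking out type $P$, which there holds for $\mu$-almost every $\mathcal{T}$.

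Now I would invoke the hypothesis of the corollary. By the equivalence recalled just before the statement, $\mathcal{T}'$ having uniform patch frequency is the same as unique ergodicity of the $\mathbb{R}^2$-action on $\Omega$, and it is equivalent to the statement that for \emph{every} patch $P$ the ratio $\mathcal{N}_P(T,x)/\mathrm{Vol}(B_T)$ converges, as $T\to\infty$, to a limit $\mathrm{freq}(P)$ independent of $x$ and of the choice of $\mathcal{T}\in\Omega$, equal to the $\mu$-measure of the corresponding cylinder. Feeding this into the decomposition of the previous paragraph — dividing by $\mathrm{Vol}(B_T)$ and letting $T\to\infty$ — gives, for every $\mathcal{T}\in\Omega$ and every $x\in\mathbb{R}^2$, that the normalised left-hand side of \eqref{eqn:ApproxleafMixing1} tends to $\sum_P\mathrm{freq}(P)\,I_P$, and the identification $\sum_P\mathrm{freq}(P)\,I_P=\int_{\mathcal{D}}(h_1\circ f^n)\cdot h_2\,d\bar\mu=\langle h_1\circ f^n,h_2\rangle_{L^2_{\bar\mu}(\mathcal{D})}$ — expressing $\bar\mu$ through the transverse patch-frequency measure — is exactly as in the proof of Theorem~\ref{thm:mixing1}. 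This proves \eqref{eqn:ApproxleafMixing1} for every $\mathcal{T}$; specialising to $g_1\equiv1$ shows the planar averages of Lemma~\ref{lem:bump} are independent of base point and of $\mathcal{T}$ and satisfy $\beta(g_i)=\int_{\mathcal{D}}h_i\,d\bar\mu$.

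Finally, to get \eqref{eqn:leafMixing1} I would let $n\to\infty$ in \eqref{eqn:ApproxleafMixing1}: since $h_1,h_2\in L^2_{\bar\mu}(\mathcal{D})$ and, by Theorem~\ref{thm:K}, $(f,\mathcal{D},\bar\mu)$ has the K property and is therefore mixing, $\langle h_1\circ f^n,h_2\rangle_{L^2_{\bar\mu}(\mathcal{D})}\to\int_{\mathcal{D}}h_1\,d\bar\mu\int_{\mathcal{D}}h_2\,d\bar\mu=\beta(g_1)\beta(g_2)$. I do not expect any serious obstacle: everything of substance is already contained in Theorem~\ref{thm:K} and in the proof of Theorem~\ref{thm:mixing1}, and the one new ingredient is the standard fact that unique ergodicity (equivalently, uniform patch frequency) makes the patch-frequency convergence hold at every point of $\Omega$ rather than almost everywhere. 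The only point requiring care is to confirm, in the proof of Theorem~\ref{thm:mixing1}, that the reduction of the integral to $\sum_P\mathcal{N}_P(T,x)\,I_P$ plus an $o_n(T^2)$ error is genuinely uniform in $\mathcal{T}$ and $x$ — which it is, being a consequence of finite horizon and finite local complexity alone — so that replacing the pointwise ergodic theorem by unique ergodicity really does deliver the conclusion for every tiling.
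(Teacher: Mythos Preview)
Your proposal is correct, and its essential idea---that the only measure-theoretic input in the proof of Theorem~\ref{thm:mixing1} is a pointwise ergodic theorem, which unique ergodicity (uniform patch frequency) upgrades from ``$\mu$-almost every $\mathcal{T}$'' to ``every $\mathcal{T}$''---is exactly how the paper treats this Corollary. In fact the paper gives no separate proof: it simply remarks, at the start of \S\ref{sec:mixing}, that under unique ergodicity every tiling is $\mu$-generic, so all results of that section hold for every $\mathcal{T}$.

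Where your write-up differs slightly is in the description of the reduction step. You express the integral over $B_T(x)\cap\partial\mathcal{S}_\mathcal{T}$ as a finite sum $\sum_P \mathcal{N}_P(T,x)\,I_P$ over scatterer-neighborhood types, and then invoke convergence of patch counts $\mathcal{N}_P(T,x)/\mathrm{Vol}(B_T)\to\mathrm{freq}(P)$. The paper's proof of Theorem~\ref{thm:generalMixing} instead routes everything through Lemma~\ref{lem:bump}: it extends a $\mathcal{T}$-equivariant function on $\partial\mathcal{S}$ to a $\mathcal{T}$-equivariant function on an $\varepsilon$-neighborhood via a bump-function construction, so that the boundary integral becomes a genuine volume integral (up to $\mathcal{O}(T)$ boundary error), and then applies Corollary~\ref{cor:relateintegrals} directly. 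Both mechanisms are standard and equivalent in effect; yours is more combinatorial, the paper's more analytic, and the bump-function approach has the mild advantage of reducing everything to a single invocation of the ergodic theorem for the $\mathbb{R}^2$-action rather than finitely many patch-frequency limits. Either way, the passage from ``almost every'' to ``every'' under unique ergodicity is the whole content of the Corollary, and you have identified it correctly.
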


The estimate on the error term in Theorem \ref{thm:mixing1} can be sharpened in the case of substitution tilings (also called self-similar tilings) which we introduce in Section \ref{sec:background} (see Definition \ref{def:substitution}). Namely,  let $\Omega$ be the tiling space of a self-similar tiling $\mathcal{T}'$ of $\mathbb{R}^2$ of finite local complexity, and let $\mu$ be the unique measure invariant under the $\mathbb{R}^2$ action. Let $\partial\Sigma\subset \Omega$ be a transversally locally constant set which defines for every $\mathcal{T}\in\Omega$ a $\mathcal{T}$-equivariant collection $\mathcal{S}_\mathcal{T}\subset \mathbb{R}^2$ of dispersive scatterers of category A with $C^3$ boundaries.
We have the following.

\begin{theorem}[See Theorem \ref{thm:SSmixing} in \S \ref{sec:mixing}]
  \label{thm:mixing2}
  For any $n$ there exist $d^+\geq 1$ functions $C_i^n:\Delta_\mathcal{T}(\mathcal{S})\times \Delta_\mathcal{T}(\mathcal{S})\rightarrow \mathbb{R}$ (where $d^+$ depends on the spectrum of the substitution matrix) such that for any $\mathcal{T}\in\Omega$, for $g_1,g_2\in \Delta_\mathcal{T}(\mathcal{S}_\mathcal{T})$ there exist transversally locally constant functions $h_1,h_2:\mathcal{D}\rightarrow \mathbb{R}$ such that for any $n\in\mathbb{N}$ and all $T$ large enough (depending on $n$), we have that
  \begin{equation}
    \label{eqn:SSapproxMixing1}
    \begin{split}
      &\int_{-\pi/2}^{\pi/2} \int_{B_T\cap \partial\mathcal{S}_\mathcal{T}} g_1\circ F^n_\theta\cdot g_2 \, d\rho\, \cos\theta\, d\theta\\
      &\hspace{.5in}= \sum_{i=1}^{d^+}L_i(T)T^{2\frac{\log |\lambda_i|}{\log\lambda_1}} C_i^n(g_1,g_2) +  \mathcal{O}_n(T),\\
      &\hspace{.5in}= \mathrm{Vol}(B_T)\langle h_1\circ f^n,h_2\rangle_{L^2_{\bar{\mu}}(\mathcal{D})} + \sum_{i=2}^{d^+} L_i(T)T^{2\frac{\log |\lambda_i|}{\log\lambda_1}} C_i^n(g_1,g_2) +  \mathcal{O}_n(T),
    \end{split}
  \end{equation}
  where $L_i(T) /\log(T)^{s-1}\in \mathcal{O}(1)$ if the size of the Jordan block associated to the $i^{th}$ eigenvalue of the substitution matrix is $s$. Moreover, the planar averaged mixing (\ref{eqn:leafMixing1}) also holds.
\end{theorem}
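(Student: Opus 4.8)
The plan is to bootstrap the general planar-averaged mixing statement (Theorem \ref{thm:mixing1}) by replacing the soft ergodic averaging with the precise asymptotics available for self-similar tilings. Recall that for a substitution tiling the renormalization (inflation–substitution) map $\Phi$ acts on $\Omega$, and deformation/cohomology theory (Sadun, Bellissard–Benedetti–Gambaudo, Clark–Sadun) gives a decomposition of the relevant space of transversally locally constant functions, or equivalently of the group of asymptotic patch frequencies, into generalized eigenspaces of the substitution matrix $M$ with eigenvalues $\lambda_1 > |\lambda_2| \geq \cdots \geq |\lambda_{d^+}|$, where $\lambda_1$ is the Perron–Frobenius eigenvalue (the square of the linear inflation constant). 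The first step is therefore to record, from \ref{lem:bump} and its proof, that the ball integral $\int_{B_T(x)\cap\partial\mathcal{S}}\psi\,d\rho\,\cos\theta\,d\theta$ of a pattern-equivariant integrand $\psi$ admits, in the self-similar case, the refined expansion $\sum_{i=1}^{d^+} L_i(T)\,T^{2\log|\lambda_i|/\log\lambda_1}\,c_i(\psi) + \mathcal{O}(T)$, with $L_i(T)/\log(T)^{s-1}$ bounded when the $i$-th Jordan block has size $s$; the leading term ($i=1$) is $\mathrm{Vol}(B_T)\beta(\psi)$. This is the standard Perron–Frobenius-with-Jordan-blocks asymptotic for counting patches in inflated regions, transported to boundary arc-length integrals via the Delone-multiset/scatterer correspondence of \S\ref{subsec:delonescat}.

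Second, I would apply this expansion to the specific integrand $\psi = g_1\circ F^n_\theta \cdot g_2$. The key point here is that $F^n_\theta$, being the first $n$ collisions of the Lorentz gas with fixed entry angle $\theta$, has finite range of dependence: because the horizon is finite (category A), there is $R_n>0$ so that the trajectory of $n$ collisions starting at a point $z\in\partial\mathcal S$ is determined by the patch of $\mathcal T$ in $B_{R_n}(z)$; hence $g_1\circ F^n_\theta \cdot g_2$ is again $\mathcal T$-equivariant (with a larger equivariance radius depending on $n$), so the expansion of the first step applies. This is exactly the ``$T$ large enough (depending on $n$)'' clause. Reading off the coefficients gives the functions $C_i^n(g_1,g_2) := c_i(g_1\circ F^n_\theta\cdot g_2)$ for $i=1,\dots,d^+$, each linear in nothing but bilinear in $(g_1,g_2)$ by construction, and the leading coefficient $C_1^n$ equals $\beta(g_1\circ F^n_\theta\cdot g_2)$. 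Identifying this leading coefficient with $\langle h_1\circ f^n, h_2\rangle_{L^2_{\bar\mu}(\mathcal D)}$ is precisely what Theorem \ref{thm:mixing1} already supplies (its $o_n(T^2)$ error is here resolved into the genuinely lower-order sum $\sum_{i\geq 2} L_i(T)T^{2\log|\lambda_i|/\log\lambda_1}C_i^n + \mathcal O_n(T)$, since $|\lambda_i|<\lambda_1$ forces each such term to be $o(T^2)$), giving the two displayed equalities in \eqref{eqn:SSapproxMixing1}.

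Third, the ``moreover'' claim is immediate: dividing \eqref{eqn:SSapproxMixing1} by $\mathrm{Vol}(B_T)\asymp T^2$ and letting $T\to\infty$ kills every term except the leading one (using $L_i(T)=o(T^\epsilon)$ and $|\lambda_i|<\lambda_1$), leaving $\langle h_1\circ f^n, h_2\rangle_{\bar\mu}$; then $n\to\infty$ and the K property of $f$ from Theorem \ref{thm:K} yield $\langle h_1\rangle_{\bar\mu}\langle h_2\rangle_{\bar\mu} = \beta(g_1)\beta(g_2)$, where the last equality uses that $\langle h_j\rangle_{\bar\mu}=\beta(g_j)$, which is the $n=0$ (or rather the defining) normalization already built into Theorem \ref{thm:mixing1}. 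I expect the main obstacle to be the first step: making the Jordan-block refinement of the ball-counting asymptotic fully rigorous in the generality of an arbitrary self-similar FLC tiling — in particular controlling the boundary-effect error term as genuinely $\mathcal{O}(T)$ (not merely $o(T^2)$) uniformly in $\mathcal T\in\Omega$, and pinning down the precise form of the polylogarithmic factors $L_i(T)$ from the size of the Jordan blocks. This is where one must be careful about whether the substitution forces borders, about the choice of a fundamental domain for the inflation, and about the fact that arc-length on $\partial\mathcal S_{\mathcal T}$ scales with the linear inflation constant $\sqrt{\lambda_1}$ rather than $\lambda_1$, which is the source of the exponents $2\log|\lambda_i|/\log\lambda_1$.
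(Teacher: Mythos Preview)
Your approach is essentially the same as the paper's: recognize that $g_1\circ F^n_\theta\cdot g_2$ is itself $\mathcal{T}$-equivariant (with radius depending on $n$, by finite horizon), apply the deviation-of-ergodic-integrals asymptotics for self-similar tilings to this integrand, and identify the leading term with $\langle h_1\circ f^n,h_2\rangle_{L^2_{\bar\mu}}$ via Theorem~\ref{thm:mixing1}; the ``moreover'' clause then follows from the K property exactly as you say. The paper's own proof of Theorem~\ref{thm:SSmixing} is a single sentence that cites \cite{bufetov-solomyak:tilings} for the refined asymptotic and says the rest goes ``the same way that (\ref{eqn:thetaAvg}) was obtained.''

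One substantive caveat on your first step: you invoke cohomology/deformation theory (Sadun, Bellissard--Benedetti--Gambaudo, Clark--Sadun) for the Jordan-block expansion, but the paper deliberately does \emph{not} go this route---see Remark~\ref{rem:1}(ii), which warns that the cohomological deviation results require regularity that fails here, since $g_1\circ F^n_\theta\cdot g_2$ inherits discontinuities from the singularity set of the collision map. The paper instead relies on Bufetov--Solomyak \cite{bufetov-solomyak:tilings}, whose asymptotics apply to merely locally integrable pattern-equivariant functions and thus cover the present integrand; with that substitution your argument goes through and matches the paper.
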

\begin{remark}
  \label{rem:1}
  Some remarks:
  \begin{enumerate}
  \item These results hold independent of whether the tiling models a quasicrystal, that is, regardless of whether the tiling $\mathcal{T}$ has long range order, thus applying to systems which do not come from cut-and-project constructions. In particular, any non-Pisot substitution tiling does not have long range order \cite{solomyak:SS} but enjoys an asymptotic expansion as in (\ref{eqn:SSapproxMixing1}).
  \item The class of substitution tilings is not the only class for which detailed asymptotics as in (\ref{eqn:SSapproxMixing1}) can be given. There is a recently-developed theory of (globally) random substitution tilings \cite{ST:random, T:TTT} which produce tilings by the random composition of different substitution rules, resulting in repetitive, aperiodic tilings of finite local complexity (and which includes the self-similar case). The detailed expansion in (\ref{eqn:SSapproxMixing1}) comes from rates of deviations of ergodic integrals for the $\mathbb{R}^2$ action on the tiling space for self-similar tilings \cite{bufetov-solomyak:tilings}, which are generalized in \cite{T:TTT} for random substitution tilings\footnote{There are other works which study deviations of ergodic integrals. However, these make use of cohomology of the tiling space, which cannot be used here as the functions which we integrate have discontinuities.}. However, the necessary background needed to introduce the construction of these tilings falls outside the scope of this paper.
  \item We remark that our mixing results give new results for \emph{periodic} Lorentz gases in that the scatterer configuration can be periodic but the observables are not. As an example, suppose that all the scatterers in the example in Figure \ref{fig:HalfHexes} are modified to be spheres of the same radius (large enough to have finite horizon). This would give a periodic scatterer configuration. However, using observables which are $\mathcal{T}$-equivariant with respect to the half-hex tiling in Figure \ref{fig:HalfHexesPatch} and which satisfy the conditions of our theorems, we also obtain planar mixing for this Lorentz gas with periodic scatterer configuration.
  \item The class of pattern-equivariant, locally integrable functions is the largest class we know for which the detailed expansion (\ref{eqn:SSapproxMixing1}) holds. However, it is not the largest class for which the expansion (\ref{eqn:ApproxleafMixing1}) holds. More specifically, the class of pattern-equivariant functions considered here agrees with the class of \textbf{strongly} pattern equivariant functions considered in \cite{kellendonk:PEF}. The mixing equations (\ref{eqn:ApproxleafMixing1}) and (\ref{eqn:leafMixing1}) also hold for \textbf{weakly} pattern equivariant functions (see again \cite{kellendonk:PEF}), and for certain limits of them. We focus on (strongly) pattern-equivariant functions here due to their evident relation to the aperiodic tiling.
  \item The averages computed using balls $B_T$ do not really need to be done with balls. In fact, any good sequence of averaging sets, such as cubes, F{\o}lner, or Van Hove sequences work just as well. This type of mixing is similar to the \emph{global-global mixing} of Lenci \cite{lenci:infMixing}.
  \item We do not know whether the Lorentz gases which we consider are \emph{recurrent}, so we do not know whether they belong to the class of aperiodic Lorentz gases studied by Lenci in \cite{lenci:recurrent}. Although we study the Lorentz gas via a compact factor with an invariant probability measure for which Poincar\'e recurrence holds, this does not imply the recurrence of the system on $\mathbb{R}^2$.
  \end{enumerate}
\end{remark}
The result on the K property implies the ergodicity of the billiard map $f: \mathcal{D} \to \mathcal{D}$, which in turn gives a result on ergodicity of the Lorentz gas flow $\bar{\phi}_t$ defined on the tiling space $\Omega_{\mathcal{T}}$. 
As such, for any $\mathcal{T}'\in\Omega_{\mathcal{T}}$ we can consider the corresponding Lorentz gas flow $\phi_t^{\mathcal{T}'}$ defined on the plane by the scatterer configuration $\mathcal{S}'$ associated to $\mathcal{T}'$.  

For any $z\in\partial\mathcal{S}'$ and $\theta\in(-\pi/2,\pi/2)$, let $\tau(z,\theta)\in(0,M]$ denote the free flight time of the Lorentz gas starting at $z$ with direction $\theta$. 
We denote by
$$  \bar{\tau}:=  \int_{\mathcal{D}}\tau(\mathcal{T}', \theta)\,d\bar{\mu}  $$
the mean free flight time on $\mathcal{D}$. For a function $f:\mathbb{R}^2\times (-\pi/2,\pi/2) \rightarrow \mathbb{R}$ define
\begin{equation}
  \label{eqn:segmentNorm}
  \|f\|_{(1)}: = \sup_{\theta\in \left(-\frac{\pi}{2},\frac{\pi}{2}\right)}\sup \left\{\frac{\int_{\gamma_\theta} |f(\cdot, \theta)|}{\mbox{length}(\gamma_\theta)}:\gamma_\theta\mbox{ is a finite length geodesic in $\mathbb{R}^2$ in direction $\theta$}\right\}
\end{equation}
and for $\mathcal{T}\in\Omega$, the space
\begin{equation}
  \label{eqn:goodSpace}
  \mathfrak{G}_\mathcal{T}:= \left\{f: f\mbox{ is a locally integrable, }\mathcal{T}\mbox{-equivariant on $\mathbb{R}^2\times (-\pi/2,\pi/2)$ with }\|f\|_{(1)}<\infty \right\}.
\end{equation}
For any  $g\in\mathfrak{G}_{\mathcal{T}'}$ the quantity
\begin{equation}
  \label{eqn:segmentAvg}
  \bar{G}(z,\theta) = \int_0^{\tau(z,\theta)}g\circ \phi^{\mathcal{T}'}_t(z,\theta)\,dt,
\end{equation}
is well defined, and it is a $\mathcal{T}'$-equivariant function on $\partial\mathcal{S}'\times (-\pi/2,\pi/2)$. We prove the following in Section \S \ref{sec:ergFlow}.
\begin{theorem}[Ergodicity of the Lorentz gas flow]
  \label{thm:erg}
  Let $\mathcal{T}$ be a repetitive, aperiodic tiling of $\mathbb{R}^2$ of finite local complexity, $\mathcal{S}$ be a $\mathcal{T}$-equivariant collection of scatterers of category A with $C^3$ boundary and $\mu$ an $\mathbb{R}^2$-invariant ergodic probability measure on $\Omega_\mathcal{T}$.    
  Let $h:\Omega\times (-\pi/2,\pi/2)\rightarrow \mathbb{R}$ be a $L^1$ transversally locally constant function which defines for every $\mathcal{T}'\in\Omega$ a $ g_{\mathcal{T'}}\in\mathfrak{G}_{\mathcal{T}'}$. For $\bar{\mu}$-almost every $(\mathcal{T}',\theta)\in\partial \Sigma\times [-\pi/2,\pi/2]$
  and $T>0$ we have that
  $$\frac{1}{T}\int_0^Tg_{\mathcal{T}'}\circ \phi^{\mathcal{T}'}_t(0,\theta)\, dt = \bar{\tau}^{-1} \frac{1}{\mathrm{Vol}(B_T)}\int_{-\pi/2}^{\pi/2}\int_{B_T\cap \partial\mathcal{S}'}\bar{G}_{\mathcal{T}'}\, d\rho \,  \cos\theta\, d\theta + o(T),$$
  as well as
  \begin{equation}
    \label{eqn:ergodicity}
    \lim_{T\rightarrow \infty}\frac{1}{T}\int_0^Tg_{\mathcal{T}'}\circ \phi^{\mathcal{T}'}_t(0,\theta)\, dt =    \bar{\tau}^{-1}    \beta(\bar{G}_{\mathcal{T}'})  .
  \end{equation}
\end{theorem}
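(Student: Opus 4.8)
The plan is to deduce ergodicity of the Lorentz gas flow from the K property (hence ergodicity) of the compact collision map $f:\mathcal{D}\to\mathcal{D}$ established in Theorem~\ref{thm:K}, using a standard suspension/Kac argument adapted to the pattern-equivariant setting, and then to convert the temporal average along a single orbit into a planar spatial average via the planar averaged mixing machinery of Theorem~\ref{thm:mixing1} (or rather its $n=0$ analogue, i.e.\ the equidistribution statement underlying Lemma~\ref{lem:bump}). First I would observe that the flow $\bar\phi_t$ on $\partial\Sigma\times S^{1}_+$ (equipped with $\bar\mu$ weighted by $\cos\theta$) is the suspension of $f$ under the roof function $\tau$; since $f$ is ergodic and $\tau$ is bounded above by $M$ and bounded below away from $0$ (no corners, finite horizon), the suspension flow is ergodic with invariant probability measure $\bar\tau^{-1}\,d\bar\mu\,dt$ on the suspension space. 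For the $L^1$ transversally locally constant observable $h$ defining $g_{\mathcal{T}'}\in\mathfrak{G}_{\mathcal{T}'}$, the function $\bar G(z,\theta)=\int_0^{\tau(z,\theta)}g_{\mathcal{T}'}\circ\phi_t^{\mathcal{T}'}(z,\theta)\,dt$ is exactly the induced observable on the base $\mathcal{D}$; the hypothesis $\|g_{\mathcal{T}'}\|_{(1)}<\infty$ together with $\tau\le M$ guarantees $\bar G\in L^1_{\bar\mu}$.

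Next I would apply the Birkhoff ergodic theorem for the suspension flow: for $\bar\mu$-a.e.\ $(\mathcal{T}',\theta)$,
\begin{equation*}
  \lim_{T\to\infty}\frac{1}{T}\int_0^T g_{\mathcal{T}'}\circ\phi^{\mathcal{T}'}_t(0,\theta)\,dt
  = \bar\tau^{-1}\int_{\mathcal{D}}\bar G\,d\bar\mu .
\end{equation*}
The remaining task is to identify the right-hand side with $\bar\tau^{-1}\beta(\bar G_{\mathcal{T}'})$, i.e.\ to show that the abstract space average $\int_{\mathcal{D}}\bar G\,d\bar\mu$ equals the planar average $\beta(\bar G_{\mathcal{T}'})$ of the pattern-equivariant function $\bar G_{\mathcal{T}'}$ on $\partial\mathcal{S}'$. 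This is precisely the content of the correspondence between transversally locally constant functions on $\mathcal{D}$ and pattern-equivariant functions on the plane, combined with the unique-ergodicity-type averaging statement in Lemma~\ref{lem:bump}: the ergodic theorem for the $\mathbb{R}^2$-action on $(\Omega,\mu)$, transported to $\mathcal{D}$, gives that the Birkhoff-type spatial averages over balls $B_T(x)\cap\partial\mathcal{S}'$ of a pattern-equivariant integrable function converge to the integral over $\mathcal{D}$ of the associated function on the Poincar\'e section. To get the non-asymptotic (error-term) form, I would run the same comparison quantitatively: $\bar G$ is transversally locally constant, so on each Cantor-transversal plaque the planar integral over $B_T(x)$ counts return visits of the base point to the finitely many transversal clopen sets, and the deviation of this count from $\mathrm{Vol}(B_T)$ times the $\bar\mu$-measure is $o(T^2)$ by the same argument used for \eqref{eqn:ApproxleafMixing1}; dividing by $T=\bar\tau^{-1}\mathrm{Vol}(B_T)/(\text{normalization})$ produces the $o(T)$ error displayed.

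I expect the main obstacle to be the careful bookkeeping in translating between the three pictures—the suspension flow on the plane $\phi^{\mathcal{T}'}_t$, the suspension over the compact factor $\bar\phi_t$, and the base dynamics $f$—and in particular justifying the passage from "for $\bar\mu$-a.e.\ point" (where Birkhoff is automatic) to the stated conclusion at the specific base point $0\in\partial\mathcal{S}'$ with the displayed $o(T)$ rate: one must show that the averaged quantity $\frac{1}{\mathrm{Vol}(B_T)}\int_{-\pi/2}^{\pi/2}\int_{B_T\cap\partial\mathcal{S}'}\bar G_{\mathcal{T}'}\,d\rho\,\cos\theta\,d\theta$ already captures the orbit average up to $o(T)$, which is where the pattern-equivariance of $\bar G$ (finite range of dependence) and the minimality of the $\mathbb{R}^2$-action enter decisively. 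A secondary technical point is verifying the lower bound $\tau\ge\tau_{\min}>0$ and the integrability $\bar G\in L^1_{\bar\mu}$ uniformly, which follow from category A (no corners gives the positive lower bound on free flight, finite horizon gives the upper bound $M$) together with $\|g_{\mathcal{T}'}\|_{(1)}<\infty$; these are routine once stated, so the real work is the equidistribution identification above.
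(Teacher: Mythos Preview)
Your proposal is correct and follows essentially the same approach as the paper: the paper decomposes the time integral $\int_0^T g_{\mathcal{T}'}\circ\phi_t^{\mathcal{T}'}\,dt$ into the Birkhoff sum $\sum_{i=0}^{k(T)-1}\bar H(f^i(\mathcal{T}',\theta))$ (up to a bounded error controlled by $M\|g_{\mathcal{T}'}\|_{(1)}$), applies Birkhoff for $f$ to both $\bar H$ and $\tau$, and identifies $\bar\mu(\bar H)=\beta(\bar G_{\mathcal{T}'})$ via the relation $\bar G_{\mathcal{T}'}(z,\theta)=\bar H(\varphi_z(\mathcal{T}'),\theta)$ together with Lemma~\ref{lem:bump}. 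Your suspension-flow formulation is just the packaged version of this same computation, and your identification $\int_{\mathcal{D}}\bar G\,d\bar\mu=\beta(\bar G_{\mathcal{T}'})$ is exactly the paper's $\bar\mu(\bar H)=\beta(\bar G_{\mathcal{T}'})$ step.
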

We note that the last four items in Remark \ref{rem:1} also apply to this ergodicity theorem.
      
This paper is organized as follows. Section \ref{sec:background} covers the necessary background material from the field of aperiodic tilings and aperiodic point (Delone) sets. In  Section \ref{sec:gases} we make rigorous the definition of aperiodic scatterer configuration under which we work, and we express it both as defined by an aperiodic tiling or by an aperiodic Delone multiset. We define the aperiodic Lorentz gas and its compact model, which generalizes the Sinai billiard, and identify the canonical invariant measure for this system. Section \ref{sec:womanifolds} collects basic facts about stable and unstable manifolds for our compact system, most of which follow from the theory of Sinai billiards. In Section \ref{sec:K} we prove the K property for our compact factor. Sections \ref{sec:mixing} and \ref{sec:ergFlow} prove the mixing (Theorems \ref{thm:mixing1} and \ref{thm:mixing2}) and ergodicity results (Theorem \ref{thm:erg}), respectively.
      \begin{ack}
        We thank F. Rodriguez-Hertz for fruitful discussions on K-systems, as well as D. Dolgopyat for enlightening discussions on Lorentz gases. We thank the tornadoes for all the good times.
      \end{ack}
      \section{Tilings and tiling spaces}
      \label{sec:background}
      Here we cover the necessary backgroung on tilings. For a more in-depth treatment, see \cite{robinson:survey, sadun:book,BG:book}.
      
      A \textbf{tiling} $\mathcal{T}$ of $\mathbb{R}^d$ is a covering of $\mathbb{R}^d$ by compact sets $t$, called \textbf{tiles}, such that any two tiles of $\mathcal{T}$ may only intersect along their boundaries. The translation $\varphi_v(\mathcal{T})$ of a tiling $\mathcal{T}$ is the tiling obtained by translating each of the tiles $t$ of $\mathcal{T}$ by the vector $- v \in\mathbb{R}^d$. If there exist compact sets $t_1,\dots, t_M$ such that every tile $t\in\mathcal{T}$ is translation-equivalent to some $t_i$, then we call the collection $\{t_1,\dots, t_M\}$ the \textbf{prototiles} of $\mathcal{T}$. A union $\mathcal{P}\subset \mathcal{T}$ of a finite number of tiles of $\mathcal{T}$ is called a \textbf{patch} of $\mathcal{T}$. For any bounded set $A\subset\mathbb{R}^d$ define the patch
$$\mathcal{O}^-_\mathcal{T}(A) = \mbox{ largest patch of $\mathcal{T}$ completely contained in $A$}.$$

For two tiles $t,t'\in\mathcal{T}$, let $t\sim t'$ denote translation-equivalence, that is, if there exists a $v \in\mathbb{R}^d$ such that $\varphi_v (t) = t'$, then one writes $t\sim t'$.

  The types of tilings which will be considered here have the following properties. They are:
  \begin{description}
  \item[Aperiodic] If $\mathcal{T}$ is a tiling and $\varphi_{v}(\mathcal{T}) = \mathcal{T}$, then $v = 0$.
  \item[Repetitive] For any patch $\mathcal{P}\subset \mathcal{T}$ there exists an $R>0$ such that every ball of radius at least $R$ around any point in $\mathbb{R}^d$ contains a patch which is translation-equivalent to $\mathcal{P}$.
  \item[Finite Local Complexity] For every $R>0$ there is a finite collection of patches
$\{\mathcal{P}_i^R\}$
 such that any patch found inside a ball of radius $R$ is translation equivalent to some $\mathcal{P}_j^R$.
  \end{description}


  
  There is a metric on the set $\{\varphi_v(\mathcal{T})\}_{v\in\mathbb{R}^d}$ of translates of $\mathcal{T}$. First, define
  $$\bar{d}(\mathcal{T},\varphi_v(\mathcal{T})) = \inf\left\{ \varepsilon>0: \mathcal{O}^-_\mathcal{T}(B_{1/\varepsilon}(0)) = \mathcal{O}^-_{\varphi_{v+w}\mathcal{T}}(B_{1/\varepsilon}(0))\mbox{ for some }\|w\|\leq \varepsilon\right\},$$
  and from this define
  \begin{equation}
    \label{eqn:metric}
    d(\mathcal{T},\varphi_v(\mathcal{T})) = \min\left\{1,\bar{d}(\mathcal{T},\varphi_v(\mathcal{T}))\right\}.
  \end{equation}
  That (\ref{eqn:metric}) is metric is a standard fact in the tilings literature. Denote by
  $$\Omega_\mathcal{T} := \overline{\{\varphi_v(\mathcal{T}):v\in\mathbb{R}^d\}}$$
  the metric completion of the set of all translates of $\mathcal{T}$ with respect to the metric (\ref{eqn:metric}). This set is called the \textbf{tiling space} of $\mathcal{T}$. Note that if $\mathcal{T}$ is repetitive and has finite local complexity then we have that $\Omega_{\mathcal{T}'} = \Omega_{\mathcal{T}}$ for any other $\mathcal{T}'\in\Omega_\mathcal{T}$. By chosing a marked point in the interior of each prototile $t_i$ it can be assumed that every tile in $\mathcal{T}$ has a marking which is translation equivalent to the marking on prototiles.
The \textbf{canonical transversal} of $\Omega_\mathcal{T}$ is the set
  \begin{equation}
    \label{eqn:transversal}
    \mho_\mathcal{T} := \{\mathcal{T}'\in\Omega_\mathcal{T}:\mbox{ the marked point in the tile containing the origin is the origin}\}.
  \end{equation}
  Let $\mathcal{P}\subset \mathcal{T}$ be a patch and $t\in\mathcal{P}$ a tile in the patch. Then
  \begin{equation}
    \label{eqn:transCyl}
    \mathcal{C}_{\mathcal{P},t} = \{\mathcal{T}'\in\Omega_\mathcal{T}: \mbox{ $\mathcal{P}$ is a patch in $\mathcal{T}'$ and the distinguished point in the tile $t$ is the origin} \}
  \end{equation}
  is the $(\mathcal{P},t)$-cylinder set, also called a \textbf{transversal cylinder set}. For $\varepsilon>0$ the $(\mathcal{P},t,\varepsilon)$-set is defined by
  \begin{equation}
    \label{eqn:Cyls}
    \mathcal{C}_{\mathcal{P},t,\varepsilon} := \bigcup_{\|v\|<\varepsilon}\left\{ \varphi_v(\mathcal{T'}):\mathcal{T}'\in  \mathcal{C}_{\mathcal{P},t}\right\}\subset \Omega_\mathcal{T}.
  \end{equation}
  The tiling space is compact if $\mathcal{T}$ has finite local complexity. Moreover, if $\mathcal{T}$ is a repetitive tiling of finite local complexity then the topology of $\Omega_\mathcal{T}$ is generated by cylinder sets of the form $\mathcal{C}_{\mathcal{P},t,\varepsilon}$ for arbitrarily small $\varepsilon>0$. This gives $\Omega_\mathcal{T}$ the local product structure of $B_\varepsilon\times \mathcal{C} $, where $\mathcal{C}$ is a Cantor set and $B_\varepsilon\subset \mathbb{R}^d$ is an open ball.

  Most of the well-known tilings, such as the Penrose tiling, can be constructed from a substitution rule, giving the tiling a self-similar hierarchical structure. This is done as follows.
  \begin{definition}\label{def:substitution}
    Given a set of prototiles $t_1,\dots, t_M$ each viewed as a subset of $\mathbb{R}^d$, an \textbf{expansion and substitution} rule is a way of expressing expanded copies of the prototiles as unions of copies of the prototiles. In other words, such a rule determines a $\lambda>1$ (the expansion) such that for any $1\leq i\leq M$,
    $$\lambda\cdot t_i = \bigcup_{j=1}^M\bigcup_{\ell=1}^{r(i,j)}t_j+v_{i,j,\ell},$$
    where $v_{i,j,\ell}\in\mathbb{R}^d$ are vectors and $r(i,j)$ is the number of tiles of type $j$ found in the expanded copy $\lambda t_i$ under this substitution rule. 
  \end{definition}
The matrix $\mathcal{R}$ with $\mathcal{R}_{ij} = r(i,j)$ is called the \textbf{substitution matrix} for the expansion and substitution. A substitution is \textbf{primitive} if the substitution matrix is a primitive matrix (i.e. $\mathcal{R}^n$ has all positive entries for all $n$ large enough).\\
  
  For a tiling $\mathcal{T}$, a patch $\mathcal{P}\subset \mathcal{T}$ and Borel set $A\subset \mathbb{R}^d$, let $L_{\mathcal{T}}(\mathcal{P},A) = \# \mbox{ of copies of $\mathcal{P}$ in $A$}$. The \textbf{asymptotic frequency} of a patch $\mathcal{P}$ in $\mathcal{T}$ is the quantity
  \begin{equation}
    \label{eqn:frequency}
    \mathrm{freq}_\mathcal{T}(\mathcal{P}):= \lim_{T\rightarrow\infty}\frac{L_{\mathcal{T}}(\mathcal{P},B_T)}{{\mathrm{Vol}(B_T)}}.
  \end{equation}

  There is a natural action of $\mathbb{R}^d$ on $\Omega_\mathcal{T}$ by translation and this action is minimal if $\mathcal{T}$ is repetitive.
Throughout the paper we denote by $\mu$ an invariant Borel probability measure for the $\mathbb{R}^d$-action. By the local product structure 
of $\Omega_\mathcal{T}$, the measure $\mu$ has a natural product structure $ \mbox{Leb}\times \nu $, where $\nu$ is a transverse measure to the action of $\mathbb{R}^d$ and invariant under holonomy. In this paper we will be concerned with the cases when
$\mu$ is ergodic.
Tiling spaces coming from primitive substitution rules, for example, have a uniquely ergodic action \cite{solomyak:SS}. 
In fact, a large class of tiling spaces constructed from random substitutions also give rise to uniquely ergodic systems. As such, the assumption that $\mu$ is an ergodic measure is not restrictive.
  
  Since open sets along the totally disconnected transversal $\mho_\mathcal{T}$ are given by cylinder sets of patches, the measure $\nu$ satisfies $\nu(\mathcal{C}_{\mathcal{P},t}) = \mbox{freq}_\mathcal{T}(\mathcal{P})$, where $\mbox{freq}_\mathcal{T}(\mathcal{P})$ is the asymptotic frequency of translation copies of the patch $\mathcal{P}$ in the tiling $\mathcal{T}$. If there is a unique invariant measure for the $\mathbb{R}^d$ action (i.e. the action is uniquely ergodic), then the patch frequencies \cite{solomyak:SS} are uniform, so even though frequencies are defined with respect to a tiling, in the uniquely ergodic case there is no dependence on the choice of a tiling $\mathcal{T}\in\Omega$, in which case we may write $\nu(\mathcal{C}_{\mathcal{P},t}) = \mbox{freq}(\mathcal{P})$.

\subsubsection{Approximants and inverse limits}
\label{subsec:approximants}
Tiling spaces are inverse limits. That is, there is a family of topological spaces $\{\Gamma_k\}_{k\in\mathbb{N}}$ (called the \textbf{approximants}) and maps $\gamma_k = \Gamma_k\rightarrow \Gamma_{k-1}$ such that there is a homeomoprhism
\begin{equation}
  \label{eqn:inverseLim}
  \Omega_\mathcal{T}\cong \lim_{\leftarrow} (\Gamma_k,\gamma_k) := \left\{\bar{z} = (z_1,z_2,\dots)\in\prod_{k\in\mathbb{N}}\Gamma_k:z_{k-1} = \gamma_k(z_k) \right\}.
\end{equation}
We denote by $\varpi_k:\Omega_\mathcal{T}\rightarrow \Gamma_k$ the canonical projections to the approximants defined by the inverse limit, and by $\pi_{\mathcal{T},k}:\mathbb{R}^d \rightarrow \Gamma_k$ the map $\pi_{\mathcal{T},k}(\tau) = \varpi_k\circ\varphi_\tau(\mathcal{T})$. If $\mathcal{T}$ has finite local complexity, each $\Gamma_k$ can be taken to be a compact space. See \cite[Chapter 2]{sadun:book} for a full discussion of tiling spaces as inverse limits. In \S \ref{subsec:apriodicScat} we will use the maps $\pi_{\mathcal{T},0}$ to define aperiodic scatterer configurations.

\begin{wrapfigure}{l}{0.25\linewidth}
  \centering
  \includegraphics[width = 1.75in]{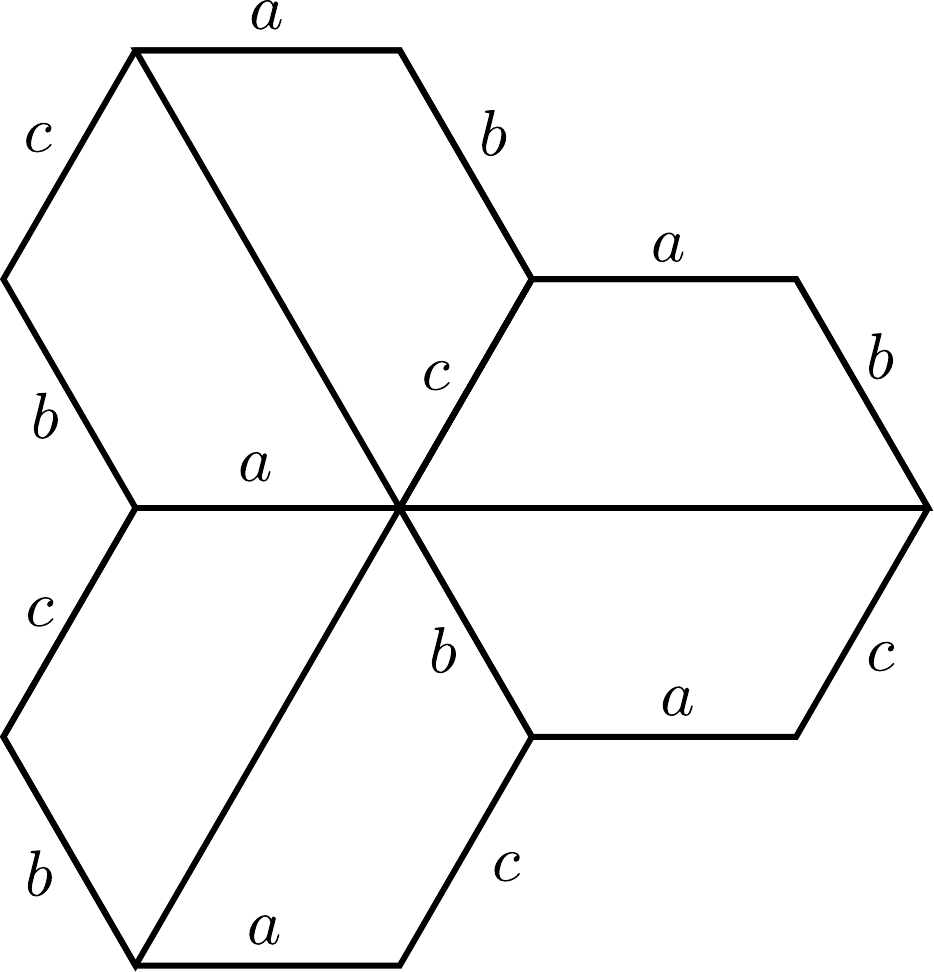}
  \caption{{\tiny Approximant for the half-hex tiling.}}
  \label{fig:HalfHexesApprox}
\end{wrapfigure}

To make this point of view more palatable, we remark that this point of view is most elegant when a tiling is constructed from a substitution rule, as first explained by Anderson and Putnam \cite{AP}. Figure \ref{fig:HalfHexesApprox}, for example, shows the approximant $\Gamma_k$ for the half-hex tiling in Figure \ref{fig:HalfHexesPatch} (edges labeled with the same symbol are identified). Because the tiling is obtained from a substitution rule (and thus a self-similarity), all approximants $\Gamma_k$ are the same. The maps $\gamma_k$ are given by substitution rule, that is, they subdivide each face of $\Gamma_k$ according to the substitution rule.
\subsection{Delone sets and multisets}\label{subsec:delone}
A \textbf{Delone set} $\Lambda\subset \mathbb{R}^d$ is a countable subset satisfying the two conditions
  \begin{description}
  \item[Uniform discreteness] There exists an $r>0$ such that for any distinct $x, y \in \Lambda$ we have that $|x-y|>r$.
  \item[Relative density] There is an $R>0$ such that $B_R(x)\cap \Lambda \neq \varnothing$ for any $x\in\mathbb{R}^d$.
  \end{description}
  A \textbf{Delone multiset} is a Delone set of the form $\Lambda = \bigcup_i^k\Lambda_i$, where each $\Lambda_i$ is a Delone set and the sets $\Lambda_i$ are mutually disjoint. We can think of a Delone multiset as a decorated Delone set. That is each point $x\in\Lambda$ in a Delone multiset comes ``colored'' by an index, so $x\in \Lambda$ has color $i$ if $x\in\Lambda_i$.

  Given a Delone multiset $\Lambda$, a \textbf{cluster} $\mathcal{P}\subset \Lambda$ is a finite subset of $\Lambda$. It is \textbf{repetitive} if for every cluster $\mathcal{P}\subset \Lambda$ there exists an $R>0$ such that $B_R(x)\cap \Lambda$ contains a translated copy of $\mathcal{P}$ for any $x\in\mathbb{R}^d$. A Delone multiset has \textbf{finite local complexity} if for every $R>0$ there exists a finite collection of clusters $\{\mathcal{P}_i^R\}$ such that 
any cluster found in a ball of radius $R$
is translation equivalent to some $\mathcal{P}_j^R$. 
  
  Delone sets and tilings represent two points of view on the same type of object. Given a tiling $\mathcal{T}$, one can construct a Delone multiset by taking the center of mass of each tile and coloring them by tile type. Conversely, from a Delone multiset one can consider the Voronoi tessellation given by this multiset, giving a tiling of $\mathbb{R}^d$. Properties such as repetitivity, aperiodicity and finite local complexity are preserved by these types of operations.

  Delone multisets are also translated by $\mathbb{R}^d$ and one can give the set of translates of $\Lambda$ a metric analogous to the metric (\ref{eqn:metric}) used for the set of translations of a tiling. The metric completion of this set is the \textbf{pattern space} $\Omega_\Lambda$ of $\Lambda$. Like a tiling space, a pattern space for a repetitive, aperiodic Delone multiset of finite local complexity is a foliated metric space, where the leaves correspond to distinct $\mathbb{R}^d$ orbits. 

  The \textbf{canonical transversal} $\mho_\Lambda\subset \Omega_\Lambda$ of a pattern space $\Omega_\Lambda$ is the subset
  $$\mho_\Lambda := \left\{\Lambda ' \in\Omega_\Lambda: \bar{0}\in\Lambda ' \right\}.$$
  If $\Lambda$ is aperiodic, repetitive and of finite local complexity, then $\mho_\Lambda\subset \Omega_\Lambda$ is a Cantor set. Given a cluster $\mathcal{P}\subset \Lambda$ with $\bar{0}\in\mathcal{P}$, the \textbf{transversal cylinder set} $\mathcal{C}_{\mathcal{P}}$ is defined in direct analogy to (\ref{eqn:transCyl}). By shifting such sets by small translations, in analogy to (\ref{eqn:Cyls}), one obtains cylinder sets $\mathcal{C}_{\mathcal{P},\varepsilon}$ which generate the topology of the pattern space $\Omega_\Lambda$. Just as with tiling spaces, under the assumptions of aperiodicity, repetitivity and finite local complexity, the pattern space has local product structure of a Euclidean ball and a Cantor set.

  Given a cluster $\mathcal{P}\subset \Lambda$, the frequency of $\mathcal{P}$ in $\Lambda$ is computed as in (\ref{eqn:frequency}). As in tiling spaces, this defines a family of transverse measures $\nu$ which is invariant under the holonomy of the $\mathbb{R}^d$ action. Furthermore, the measure which is locally the product $ \mathrm{Leb}\times\nu$ is invariant under the $\mathbb{R}^d$ action.
\subsection{Functions}
\label{subsec:fun}
Let $\mathcal{T}$ be a repetitive, aperiodic tiling of $\mathbb{R}^d$ of finite local complexity, and $\Lambda$ a Delone multiset with the same properties.
\begin{definition}
For any set $X$, a function $g:\mathbb{R}^d\times X\rightarrow \mathbb{R}$ is \textbf{$\mathcal{T}$-equivariant} (or \textbf{pattern-equivariant}) if there exists an $R>0$ such that if for any $x,y\in\mathbb{R}^d$ we have that $\mathcal{O}^-(B_R(x))$
(see \S \ref{sec:background} for definition) is translation equivalent to $\mathcal{O}^-(B_R(y))$, then $g(x,z) = g(y,z)$ for any $z\in X$. A $\Lambda$-equivariant function $g:\mathbb{R}^d\times X\rightarrow \mathbb{R}$ is similarly defined for Delone multisets.
\end{definition}
In short, these are functions which depend only on the local configuration of the tiling or Delone multiset used for reference. 
\begin{definition}
  Let $\Omega$ be the space associated to either $\mathcal{T}$ or $\Lambda$ and $X$ any set. A function $h:\Omega\times X\rightarrow \mathbb{R}$ is \textbf{transversally locally constant} if there exists an $R>0$ such that for any two $p,p'\in\Omega$ we have that $B_R(\bar{0})\cap p = B_R(\bar{0})\cap p' $ implies $h(p,z) = h(p',z)$ for any $z\in X$.
\end{definition}
In other words, transversally locally constant functions are locally constant along transversals. These are the functions with the highest ``regularity'' in the direction of the (totally disconnected) transversal. Pattern-equivariant functions and transversally locally constant functions are related in a nice way (see \cite[Proposition 22]{KP:RS}).
\begin{proposition}\label{prop:relatefunctions}
  For $  p \in\Omega$ a function $g:\mathbb{R}^d\times X\rightarrow \mathbb{R}$ $p$-equivariant if and only if there is a transversally locally constant function $h:\Omega\times X\rightarrow \mathbb{R}$ such that
  $$g(x,z) = h(\varphi_{x}(p),z)$$
  for all $x\in\mathbb{R}^d$ and $z\in X$.
\end{proposition}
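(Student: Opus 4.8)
The plan is to prove both implications by exploiting the tautological relationship between local patterns of $p$ around a point $x\in\mathbb{R}^d$ and the point $\varphi_x(p)\in\Omega$ together with its local behaviour along the transversal direction. The key technical point is that two tilings (or Delone multisets) $q,q'\in\Omega$ agree on a ball $B_R(\bar 0)$ around the origin if and only if the patch $\mathcal{O}^-_q(B_R(\bar 0))$ is translation equivalent to $\mathcal{O}^-_{q'}(B_R(\bar 0))$ \emph{as a subset positioned in the ball} (i.e. with no translation), which by the definition of the tiling metric \eqref{eqn:metric} is, up to adjusting $R$ by a bounded amount, the same as saying $d(q,q')$ is small. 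I will suppress the auxiliary set $X$ and the point $z\in X$ throughout, since $z$ is merely a passive parameter: one fixes $z$, runs the argument, and observes that the radius $R$ produced can be chosen uniformly in $z$ (indeed the definitions of $\mathcal{T}$-equivariance and transverse local constancy both ask for a single $R$ valid for all $z$).

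First I would prove the ``if'' direction. Suppose $h:\Omega\rightarrow\mathbb{R}$ is transversally locally constant with radius $R$, so that $B_R(\bar 0)\cap q = B_R(\bar 0)\cap q'$ forces $h(q)=h(q')$, and set $g(x)=h(\varphi_x(p))$. Take $x,y\in\mathbb{R}^d$ with $\mathcal{O}^-_p(B_R(x))$ translation equivalent to $\mathcal{O}^-_p(B_R(y))$; more precisely, since $\mathcal{O}^-_p(B_R(x))-x$ and $\mathcal{O}^-_p(B_R(y))-y$ are the patches of $\varphi_x(p)$ and $\varphi_y(p)$ inside $B_R(\bar 0)$, translation equivalence of the original patches (witnessed by the vector $y-x$) says exactly that $\varphi_x(p)$ and $\varphi_y(p)$ have the same patch inside $B_R(\bar 0)$, hence agree on $B_R(\bar 0)$. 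Transverse local constancy then gives $h(\varphi_x(p))=h(\varphi_y(p))$, i.e. $g(x)=g(y)$; thus $g$ is $p$-equivariant (with the same $R$, after the harmless bookkeeping comparing ``agree on $B_R$'' with ``$\mathcal{O}^-$ of $B_R$ are translation equivalent'').

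For the ``only if'' direction, suppose $g$ is $p$-equivariant with radius $R$. I want to define $h$ on $\Omega$ by $h(q):=g(v)$ whenever $q=\varphi_{v}(p)$ lies on the orbit of $p$, and then check this extends consistently to all of $\Omega=\overline{\{\varphi_v(p):v\in\mathbb{R}^d\}}$. Well-definedness on the orbit: if $\varphi_v(p)=\varphi_w(p)$ then by aperiodicity $v=w$, so there is nothing to check (if $p$ were periodic one would instead invoke $p$-equivariance to see $g(v)=g(w)$, but aperiodicity is assumed). The substantive step is to show this map is transversally locally constant on the orbit — hence uniformly continuous along transversals — and therefore extends continuously to the closure. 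Concretely: if $q=\varphi_v(p)$ and $q'=\varphi_w(p)$ agree on $B_R(\bar 0)$, then as above $\mathcal{O}^-_p(B_R(v))$ and $\mathcal{O}^-_p(B_R(w))$ are translation equivalent, so $p$-equivariance gives $g(v)=g(w)$, i.e. $h(q)=h(q')$; this is precisely transverse local constancy with radius $R$ on the orbit. To pass to the closure, note that the orbit is dense and that the cylinder sets $\mathcal{C}_{\mathcal{P},t,\varepsilon}$ (resp.\ $\mathcal{C}_{\mathcal{P},\varepsilon}$) generate the topology: on the set of points of $\Omega$ whose pattern inside $B_R(\bar 0)$ equals a fixed patch $\mathcal{P}$, the function $h$ is constrained to a single value already determined on the (dense, by minimality) orbit portion of that set, so $h$ extends by that constant value, and the extension is transversally locally constant with the same $R$. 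Finally $g(x)=h(\varphi_x(p))$ holds by construction on the orbit, which is all of what is asserted.

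The main obstacle I anticipate is purely one of careful bookkeeping rather than genuine difficulty: matching up the two notions ``$q$ and $q'$ agree on $B_R(\bar 0)$'' and ``$\mathcal{O}^-_q(B_R(\bar 0))$ is translation equivalent to $\mathcal{O}^-_{q'}(B_R(\bar 0))$'' — the first is what transverse local constancy is phrased with, the second (after translating by the base point) is what $p$-equivariance compares — and ensuring that the radius $R$ used for $g$, the radius used for $h$, and the small translation parameter $\varepsilon$ in the cylinder sets can all be reconciled with only bounded loss (this uses finite local complexity, so that ``same pattern in $B_R(\bar 0)$'' is a clopen condition cutting $\Omega$ into finitely many pieces). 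One must also be mildly careful that the extension of $h$ from the dense orbit to $\Omega$ is forced, not merely possible: this is where repetitivity/minimality of the $\mathbb{R}^d$-action enters, guaranteeing that every point of $\Omega$ is a limit of orbit points whose $B_R(\bar 0)$-patterns are eventually constant, so the candidate value of $h$ at a limit point is uniquely pinned down. None of this requires estimates; it is a matter of threading the definitions, and I would present it at the level of detail above, citing \cite[Proposition 22]{KP:RS} for the statement as recorded.
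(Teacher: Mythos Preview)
The paper does not supply its own proof of this proposition; it merely states the result and refers the reader to \cite[Proposition 22]{KP:RS}. So there is no in-paper argument to compare against.

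Your proposal is the standard argument and is essentially correct. One point worth tightening: in the ``if'' direction you assert that translation equivalence of $\mathcal{O}^-_p(B_R(x))$ and $\mathcal{O}^-_p(B_R(y))$ is ``witnessed by the vector $y-x$'', but the paper's definition of $p$-equivariance only asks that the patches be translation equivalent by \emph{some} vector, not necessarily $y-x$. To make the implication go through cleanly you should take the equivariance radius for $g$ to be $R' = R + D$ with $D$ the maximal tile diameter; then any vector realising the translation equivalence of the $R'$-patches must carry $x$ to within a tile of $y$, and the resulting agreement of $\varphi_x(p)$ and $\varphi_y(p)$ on $B_R(\bar 0)$ follows. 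You already flag exactly this bookkeeping issue in your final paragraph, so this is a matter of making the adjustment explicit rather than a gap.
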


As a direct consequence of Proposition \ref{prop:relatefunctions} we obtain the following.
\begin{corollary}\label{cor:relateintegrals}
If $\mu$ is an $\mathbb{R}^d$-invariant ergodic probability measure on $\Omega$ and $\kappa$ is any measure on $X$, then for $\mu$- almost every $p\in \Omega$ and for every $p$-equivariant function $g:\mathbb{R}^d\times X\rightarrow \mathbb{R}$ we have that

$$  \lim_{T \to \infty}  \frac{1}{\mbox{Vol}(B_T)}  \int_X   \int_{B_T}   g(x,z) d\mbox{Vol}(x) d\kappa(z)    =  \int_X   \int_{\Omega}  h(p',z)  d\mu(p')  d\kappa(z),     $$

where  $h:\Omega\times X\rightarrow \mathbb{R}$  is a transversally locally constant function satisfying   $g(x,z) = h(\varphi_{x}(p),z)$
  for all $x\in\mathbb{R}^d$ and $z\in X$.

\end{corollary}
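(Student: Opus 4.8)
The plan is to recognize the inner integral over $B_T$ as an ergodic average for the $\mathbb{R}^d$-action on $(\Omega,\mu)$ and then invoke the pointwise ergodic theorem, the only subtlety being that a single conull set of base points $p$ must work for all $p$-equivariant $g$ at once. First I would fix a $p$-equivariant $g$ of equivariance radius $R$, which we may take to be a positive integer. By Proposition \ref{prop:relatefunctions} there is a transversally locally constant $h:\Omega\times X\to\mathbb{R}$ with $g(x,z)=h(\varphi_x(p),z)$. Concretely, finite local complexity guarantees that there are only finitely many translation-classes $\mathcal{P}_1,\dots,\mathcal{P}_{N(R)}$ of patches of diameter $\le 2R$ occurring in $\mathcal{T}$; the sets $W_i:=\{q\in\Omega:\mathcal{O}^-_q(B_R(\bar{0}))\sim\mathcal{P}_i\}$ are Borel and partition $\Omega$, and $h(q,z)=\sum_{i=1}^{N(R)}c_i(z)\,\mathbf{1}_{W_i}(q)$, where $c_i(z)$ is the common value of $g(\cdot,z)$ at points whose $R$-patch in $p$ is of type $\mathcal{P}_i$. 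The crucial point is that the sets $W_i$ depend only on $R$, not on $z$ or on $g$. Thus $\int_\Omega h(q,z)\,d\mu(q)=\sum_i c_i(z)\mu(W_i)$, and the inner integral equals $\int_{B_T}g(x,z)\,d\mathrm{Vol}(x)=\sum_i c_i(z)\int_{B_T}\mathbf{1}_{W_i}(\varphi_x(p))\,d\mathrm{Vol}(x)$, a finite combination of ergodic averages of indicators along the orbit of $p$.

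Next I would apply the pointwise (Wiener) ergodic theorem for the $\mathbb{R}^d$-action along the F{\o}lner (Van Hove) sequence $\{B_T\}_{T>0}$: since $\mu$ is ergodic and each $\mathbf{1}_{W_i}\in L^\infty(\mu)\subset L^1(\mu)$, there is a conull set on which $\tfrac{1}{\mathrm{Vol}(B_T)}\int_{B_T}\mathbf{1}_{W_i}(\varphi_x(q))\,d\mathrm{Vol}(x)\to\mu(W_i)$. For each integer $R$ this is only finitely many conditions, so intersecting over $i=1,\dots,N(R)$ and then over all $R\in\mathbb{N}$ produces a \emph{single} conull set $\Omega_0\subset\Omega$. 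For every $p\in\Omega_0$, every $p$-equivariant $g$, and every $z\in X$ we then obtain $\tfrac{1}{\mathrm{Vol}(B_T)}\int_{B_T}g(x,z)\,d\mathrm{Vol}(x)\to\int_\Omega h(q,z)\,d\mu(q)$, with the exceptional set fixed once and for all.

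It remains to integrate over $(X,\kappa)$ and commute the limit with $\int_X\!\cdot\,d\kappa$. Using Fubini to write $\tfrac{1}{\mathrm{Vol}(B_T)}\int_X\int_{B_T}g\,d\mathrm{Vol}\,d\kappa=\int_X\big(\sum_i c_i(z)\,\tfrac{1}{\mathrm{Vol}(B_T)}\int_{B_T}\mathbf{1}_{W_i}(\varphi_x(p))\,d\mathrm{Vol}(x)\big)\,d\kappa(z)$, the bracketed quantity is bounded uniformly in $T$ by $\sum_i|c_i(z)|=\sup_x|g(x,z)|$, which is $\kappa$-integrable precisely under the hypothesis that makes the left-hand side of the statement finite. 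Dominated convergence then passes the limit inside the integral over $X$ and yields $\int_X\int_\Omega h(q,z)\,d\mu(q)\,d\kappa(z)$, as claimed.

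The main obstacle is exactly this order of quantifiers — one conull set of base tilings must serve all $p$-equivariant $g$ simultaneously — and it is overcome by finite local complexity, which collapses, for each fixed radius, the relevant family of transversally locally constant functions to finite linear combinations of indicators of a finite list of patch sets, so that only countably many exceptional sets from the ergodic theorem need to be discarded. The only other point requiring attention is the dominated-convergence step, which forces a mild integrability assumption on $g$ relative to $\kappa$ (automatic in our applications, where $\kappa$ is finite and $g$ is bounded).
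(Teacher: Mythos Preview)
Your argument is correct and is exactly the route the paper has in mind: the corollary is stated without proof, merely as ``a direct consequence of Proposition~\ref{prop:relatefunctions}'' (combined implicitly with the pointwise ergodic theorem for the $\mathbb{R}^d$-action), and you have supplied precisely those details. In particular, your use of finite local complexity to reduce to countably many indicator functions and thereby secure a single conull set for all $p$-equivariant $g$ addresses a quantifier issue the paper leaves tacit. One small slip: $\sum_i|c_i(z)|$ is not equal to $\sup_x|g(x,z)|$ but to something larger; since the $W_i$ partition $\Omega$ the ergodic averages of the indicators sum to $1$, so the correct uniform bound is $\max_i|c_i(z)|=\sup_x|g(x,z)|$, which is what you need for dominated convergence anyway.
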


There is a notion of \textbf{weakly} pattern equivariant functions with a similar characterization, although they will not be featured here. A function $g:\mathbb{R}^d\times X\rightarrow \mathbb{R}$ is weakly $p$-equivariant if there exists a transversally continuous function $h:\Omega\times X\rightarrow \mathbb{R}$ such that $g(x,z) = h(\varphi_{x}(p),z)$ for all $x\in\mathbb{R}^d$ and $z\in X$.

\section{Lorentz gases}
\label{sec:gases}
Let $\mathcal{S}\subset \mathbb{R}^d$ be a collection of open and convex topological disks with $C^3$ boundaries, called the \textbf{scatterers}. The Lorentz gas is the system describing the behavior of a particle moving along geodesics in $\mathbb{R}^d\backslash \bar{\mathcal{S}}$ and having elastic collisions at $\partial \mathcal{S}$. 
In this paper we restrict our attention to scatterer configurations of \textbf{Category A}, that is such that the time between any two collisions with $\partial \mathcal{S}$ is bounded above by some $M=M(\mathcal{S})>0$ (finite horizon) and that the closures of each two scatterers are disjoint (no corners).

 
\subsection{Aperiodic scatterer configurations}
\label{subsec:apriodicScat}
  \begin{definition}
    Let $\mathcal{S}\subset \mathbb{R}^d$ be a collection of scatterers. We call $\mathcal{S}$ an \textbf{aperiodic configuration} if there exists no nontrivial $v \in\mathbb{R}^d$ such that $\varphi_v (\mathcal{S}) = \mathcal{S}$.
  \end{definition}
  
  In this paper we consider two ways of constructing aperiodic scatterer configurations, one through aperiodic tilings and the other through aperiodic Delone multisets. In this subsection we describe both constructions and point out relations between them.
  \subsubsection{$\mathcal{T}$-equivariant scatterers}
  Throughout this section, $\mathcal{T}$ is a repetitive, aperiodic tiling of $\mathbb{R}^d$ of finite local complexity.
  \begin{definition}
    A collection of \textbf{$\mathcal{T}$-equivariant scatterers} is a scatterer configuration $\mathcal{S}$ which depends on the local configuration of $\mathcal{T}$. More precisely, there exists an $R>0$ such that if $S,S'\subset\mathcal{S}$ are connected components (scatterers) and $x,y$ are their respective centers of mass, then if $\mathcal{O}_\mathcal{T}^-(B_R(x)) = \mathcal{O}_\mathcal{T}^-(B_R(y)) + x-y$ then $S = \varphi_{-(x-y)}(S')$, that is, the scatterer around $x$ is translation equivalent to that around $y$.
  \end{definition}  
  If $\mathcal{T}$ is a hierarchical tiling, then by the inverse limit construction in \S \ref{subsec:approximants}, a collection $\mathcal{S}$ of scatterers is $\mathcal{T}$-equivariant, if there exists a $k\geq 0$ and an open set $\mathcal{S}_k\subset \Gamma_k $ such 
that $\mathcal{S} = \pi_{\mathcal{T},k}^{-1}(\mathcal{S}_{k})$. Any such set $\mathcal{S}_k$ which lifts to the scatterers of an aperiodic Lorentz gas is called the \textbf{set of model scatterers}. Figure \ref{fig:HalfHexesApprox} shows the model scatterers on the approximant $\Gamma_1$ which gives the pattern equivariant scatterers in Figure \ref{fig:HalfHexes}.

\begin{wrapfigure}{r}{0.25\linewidth}
  \centering
  \includegraphics[width = 1.75in]{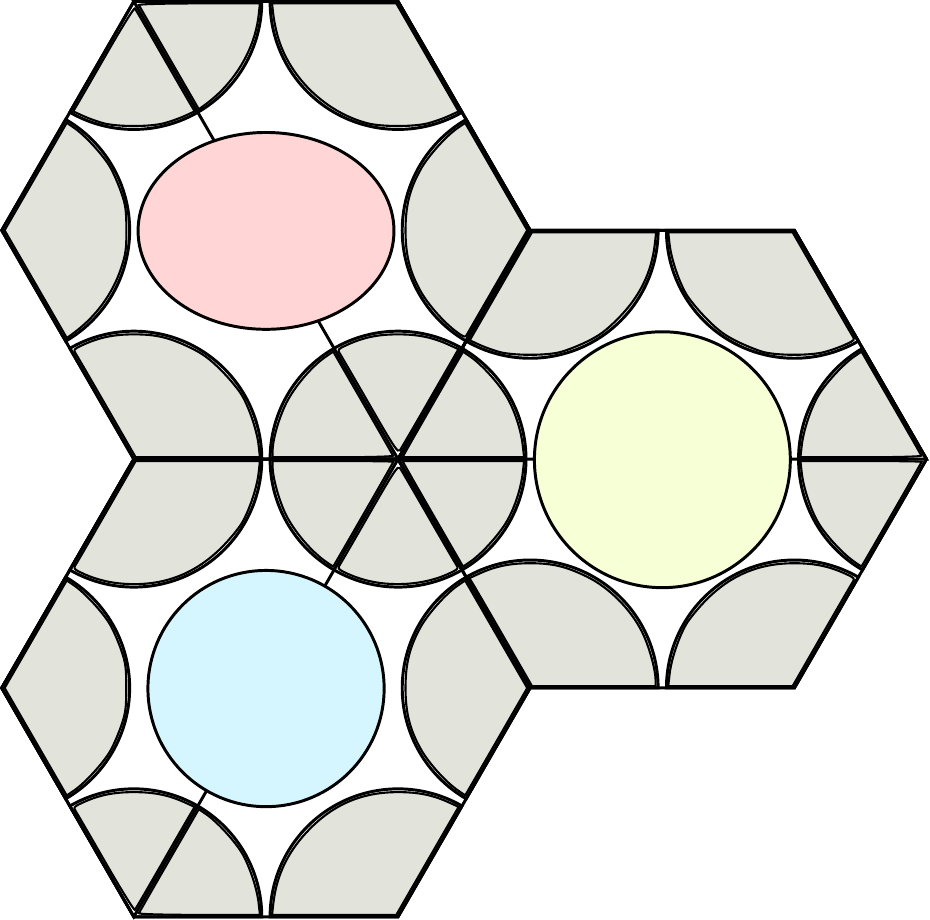}
  \caption{{\tiny Approximant and model scatterers.}}
  \label{fig:HalfHexesApprox2}
\end{wrapfigure}

\begin{lemma}
  \label{lem:level0}
  Without loss of generality we can assume that $k = 0$ for the set of model scatterers of a set of pattern-equivariant scatterers.
\end{lemma}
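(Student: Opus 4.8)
The plan is to replace a set of model scatterers living on a high approximant $\Gamma_k$ by an equivalent one living on $\Gamma_0$, using the fact that the projections $\pi_{\mathcal{T},k}$ and $\pi_{\mathcal{T},0}$ are related by the structure maps $\gamma_1,\dots,\gamma_k$. Recall that $\pi_{\mathcal{T},0} = \gamma_1\circ\cdots\circ\gamma_k\circ\pi_{\mathcal{T},k}$, so if $\mathcal{S} = \pi_{\mathcal{T},k}^{-1}(\mathcal{S}_k)$ for some open $\mathcal{S}_k\subset\Gamma_k$, the naive move would be to push $\mathcal{S}_k$ forward along $\gamma:=\gamma_1\circ\cdots\circ\gamma_k$ and set $\mathcal{S}_0 = \gamma(\mathcal{S}_k)$. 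This does not literally work because $\gamma$ is not injective (it subdivides cells), so $\gamma(\mathcal{S}_k)$ need not pull back to exactly $\mathcal{S}$. The fix is to observe that $\Gamma_0$ is, up to homeomorphism, the quotient of $\Omega_\mathcal{T}\times\mathbb{R}^d$ by the coarsest of the relations $\sim_\ell$, and that $\pi_{\mathcal{T},0}$ retains all information needed to reconstruct, in a bounded neighborhood, the patch $\mathcal{O}^-_\mathcal{T}(B_R(x))$ for whatever $R$ controls the $\mathcal{T}$-equivariance of $\mathcal{S}$.

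Concretely, here is the order of steps I would carry out. First, by the definition of $\mathcal{T}$-equivariance there is an $R>0$ such that the scatterer at $x$ is determined by $\mathcal{O}^-_\mathcal{T}(B_R(x))$; by finite local complexity there are only finitely many such patches. Second, I would note that $\Gamma_0$ already separates these finitely many local patches: two points $x,y$ project to the same point of $\Gamma_0$ only if the tiles containing them (together with their decorations/markings) agree, and by refining the marking on the prototiles if necessary — or by passing to a ``collared'' prototile set in the sense of Anderson–Putnam, which only changes $\Gamma_0$ by a homeomorphism and does not affect the tiling space — we may assume the level-$0$ approximant already records the patch $\mathcal{O}^-_\mathcal{T}(B_R(x))$. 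This is the standard device: enlarging the prototile alphabet to ``collared tiles'' makes $\Gamma_0$ as fine as any $\Gamma_k$ at the cost of finitely many prototiles. Third, with this choice, the map sending a point of $\Gamma_0$ to the scatterer it determines is well defined, and its ``graph'' region $\mathcal{S}_0\subset\Gamma_0$ is open (the scatterers have $C^3$ boundary and vary in a locally trivial way over the finitely many patch types), and by construction $\pi_{\mathcal{T},0}^{-1}(\mathcal{S}_0) = \mathcal{S}$.

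The main obstacle, and the only genuinely non-routine point, is justifying that one may pass to collared prototiles without loss of generality: one must check that (a) the collared tiling generates the same tiling space $\Omega_\mathcal{T}$ (this is Anderson–Putnam, and holds here because we assumed finite local complexity), (b) a collection of scatterers that is $\mathcal{T}$-equivariant with the original prototiles is still $\mathcal{T}$-equivariant with the collared ones (immediate, since the collared patches are finer), and (c) the new $\Gamma_0$ is still a reasonable approximant, i.e. the maps $\pi_{\mathcal{T},0}$ and $\varpi_0$ behave as in \S\ref{subsec:approximants}. Once that bookkeeping is in place the rest is formal. I would also remark that the statement should be read as ``up to replacing the prototile set by a collared one,'' which is the usual convention and costs nothing in the arguments that follow, since all later constructions (the Poincaré section $\mathcal{D}$, the measure $\bar\mu$, the pattern-equivariant function classes) depend only on $\Omega_\mathcal{T}$ and not on the particular presentation of its prototiles.
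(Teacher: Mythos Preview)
Your argument via Anderson--Putnam collaring is correct, but the paper takes a much shorter and different route. Rather than refining the prototile alphabet so that the original $\Gamma_0$ becomes fine enough to record the scatterers, the paper simply \emph{re-indexes the inverse limit}: it replaces $\Omega_\mathcal{T} = \varprojlim(\Gamma_0,\Gamma_1,\dots)$ by the homeomorphic space $\Omega' = \varprojlim(\Gamma_k,\Gamma_{k+1},\dots)$ obtained by truncating the first $k$ coordinates, via the map $\Phi(\bar z) = (z_k,z_{k+1},\dots)$ with inverse determined by $\gamma_1\circ\cdots\circ\gamma_k$. In $\Omega'$ the zeroth approximant is literally the old $\Gamma_k$, so $\mathcal{S}_k$ now sits in level $0$ with no further work. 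Your approach buys an explicit description of the new prototile set and keeps the original indexing of the inverse system, at the cost of invoking the collaring machinery and checking the compatibility points (a)--(c) you list; the paper's approach is a one-line bookkeeping trick but silently changes which tiling space (up to homeomorphism and scaling) one is working in. Both are standard moves in the tilings literature and either suffices for the downstream constructions.
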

\begin{proof}
  Suppose that $\mathcal{S}_k\subset \Gamma_k$ is the set which defines the $\mathcal{T}$-equivariant collection of scatterers. Let $\Omega_\mathcal{T}$ be the tiling space of $\mathcal{T}$ defined as an inverse limit (\ref{eqn:inverseLim}). There is another tiling
space $\Omega '$ and a homeomorphism of tiling spaces $\Phi:\Omega_\mathcal{T}\rightarrow \Omega'$ as follows. For $\bar{z} = (z_0,z_1,z_2,\dots)\in\Omega_\mathcal{T}$ (in inverse limit coordinates), let $\Phi(z) = (z_k,z_{k+1},\dots)$, where $k$ is the index of the approximant where the set of model scatterers is defined. The map $\Phi$ has inverse $\Phi^{-1} = \gamma_1\circ \cdots\circ \gamma_k$. Thus, if the set of model scatterers is defined by the $k^{th}$ approximant in $\Omega_\mathcal{T}$, it is defined, up to scaling, by the $0^{th}$ approximant in $\Omega'$. 
\end{proof}
Given this observation, from now on we will assume that our set of scatterers is defined by a set $\mathcal{S}_0\subset \Gamma_0$ in the zeroth approximant.
\begin{remark}
  \label{lem:careful}
If $\mathcal{T}$ is a repetitive, aperiodic tiling of finite local complexity and $\mathcal{S}$ is a $\mathcal{T}$ equivariant collection of scatterers, then $\mathcal{S}$ is not necessarily aperiodic. This can be observed from Figures \ref{fig:HalfHexes} and \ref{fig:HalfHexesApprox2}. There is an underlying hexagonal (i.e. periodic) grid in the half hex tiling. If we were to pick the model scatterers to have the same constant curvature in Figure \ref{fig:HalfHexesApprox2}, they would lift to a periodic configuration in $\mathbb{R}^2$ (see Figure \ref{fig:HalfHexes}).
\end{remark}

\subsubsection{$\Lambda$-equivariant scatterers}\label{subsec:delonescat}

Let $\mathcal{S}\subset \mathbb{R}^d$ be a scatterer configuration, and $\mathfrak{C}(\mathcal{S})$ the set of connected components of $\mathcal{S}$. Let $\overline{\mathfrak{C}(\mathcal{S})} = \mathfrak{C}(\mathcal{S})/\sim$, where $S \sim S'$ if and only if $S = \varphi_v(S')$ for some $v\in\mathbb{R}^d$. The set $\overline{\mathfrak{C}(\mathcal{S})}$ records the geometrically different types of scatterers found in the collection $\mathcal{S}$. For each $c\in \overline{\mathfrak{C}(\mathcal{S})}$ we set
$$\Lambda_c^\mathcal{S} = \bigcup_{S \in c}\{\mbox{center of mass of }S\}\hspace{.4in}\mbox{ and define } \hspace{.4in} \Lambda_\mathcal{S} = \bigcup_{c\in \overline{\mathfrak{C}(\mathcal{S})}} \Lambda_c^\mathcal{S},$$
which may or may not be a Delone multiset.
\begin{definition}
Let $\Lambda = \bigcup_{i\in I} \Lambda_i$ be a Delone multiset. A scatterer configuration $\mathcal{S}$ is $\Lambda$-equivariant if there are $|I|$ distinct $C^3$ convex (topological) $d-1$ spheres $\{S_i\}$
such that $\Lambda_i = \Lambda_{i}^\mathcal{S}$ and $\Lambda_\mathcal{S} = \Lambda$. 
\end{definition}

\begin{definition}
We say that a configuration of scatterers $\mathcal{S}$ is repetitive, has finite local complexity, and uniform patch frequency, respectively, if and only if $\Lambda_\mathcal{S}$ forms a Delone multiset with those properties.
\end{definition}
\begin{lemma}
     If $\mathcal{S}$ is a repetitive, aperiodic scatterer configuration with finite local complexity and uniform patch frequency then there is a repetitive, aperiodic tiling $\mathcal{T}$ of finite local complexity and uniform patch frequency such that $\mathcal{S}$ is a $\mathcal{T}$-equivariant scatterer configuration.
\end{lemma}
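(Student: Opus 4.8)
The plan is to build the tiling $\mathcal{T}$ directly from the combinatorial data carried by the Delone multiset $\Lambda_\mathcal{S}$, so that $\mathcal{S}$ becomes equivariant with respect to it essentially by construction. First I would pass from $\mathcal{S}$ to the associated Delone multiset $\Lambda := \Lambda_\mathcal{S} = \bigcup_{c \in \overline{\mathfrak{C}(\mathcal{S})}} \Lambda_c^\mathcal{S}$, which by hypothesis is repetitive, aperiodic, of finite local complexity, and of uniform patch frequency. Then I would take the Voronoi tessellation $\mathcal{T}_0$ of $\mathbb{R}^d$ determined by the underlying point set $\bigcup_c \Lambda_c^\mathcal{S}$, decorating each Voronoi cell by the color $c$ of the generating point. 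As recorded in \S\ref{subsec:delone}, the Voronoi construction transports repetitivity, aperiodicity, finite local complexity (and, one checks, uniform patch frequency, since Voronoi cells depend only on a bounded cluster of $\Lambda$ by relative density plus finite local complexity) from the Delone multiset to the resulting tiling. So $\mathcal{T} := \mathcal{T}_0$ is a repetitive, aperiodic tiling of finite local complexity and uniform patch frequency.

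It remains to verify that $\mathcal{S}$ is $\mathcal{T}$-equivariant, i.e.\ that there is an $R>0$ such that whenever $S, S' \subset \mathcal{S}$ are scatterers with centers of mass $x,y$ and $\mathcal{O}^-_\mathcal{T}(B_R(x)) = \mathcal{O}^-_\mathcal{T}(B_R(y)) + (x-y)$, then $S = \varphi_{-(x-y)}(S')$. The key point is that the passage $\Lambda \mapsto \mathcal{T}$ is \emph{locally invertible}: the color-decorated Voronoi patch $\mathcal{O}^-_\mathcal{T}(B_R(x))$ determines, for $R$ larger than twice the Delone constant $R_\Lambda$ of $\Lambda$, exactly which colored points of $\Lambda$ lie in a somewhat smaller ball $B_{R/2}(x)$ — the generators are recovered as the marked points (or centers) of the cells, and their colors are the cell decorations. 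Hence an agreement of Voronoi patches of radius $R$ around $x$ and $y$ forces an agreement of the colored clusters $B_{R/2}(x)\cap\Lambda$ and $B_{R/2}(y)\cap\Lambda + (x-y)$. Now $x,y$ being centers of mass of scatterers means $x \in \Lambda_{c}$, $y \in \Lambda_{c'}$ for colors $c, c'$, and agreement of the colored clusters (which include the central points with their colors) forces $c = c'$; since in a $\Lambda$-equivariant configuration the scatterer sitting over a point of $\Lambda_c$ is the fixed sphere $S_c$ (up to the translation carrying that point to the origin), we get $S = S_c + x = \varphi_{-(x-y)}(S_{c'} + y) = \varphi_{-(x-y)}(S')$. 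Choosing $R$ above any such threshold (and also above the diameters needed so that $\mathcal{O}^-_\mathcal{T}(B_R(\cdot))$ actually contains the relevant cells) completes the verification.

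The main obstacle I anticipate is the \emph{local invertibility} step — making precise, with an explicit $R$, that a finite Voronoi patch recovers the colored Delone cluster that generated it. This is intuitively clear but requires care: one must use uniform discreteness and relative density of $\Lambda$ to bound how far away a generator of a given cell can be, and finite local complexity to ensure only finitely many local pictures occur, so that a single $R$ works globally. A secondary technical point is confirming that the Voronoi construction preserves uniform patch frequency; this follows because Voronoi-patch statistics in $B_T$ are, up to a bounded boundary correction, determined by colored-cluster statistics of $\Lambda$ in a slightly larger ball, and the latter have a limit independent of the chosen multiset in $\Omega_\Lambda$ by hypothesis. One should also address the edge case flagged in Remark \ref{lem:careful}: $\mathcal{S}$ itself need not be aperiodic in general, but here aperiodicity of $\mathcal{S}$ is part of the hypothesis, and it is compatible with (indeed forces, via the faithful local correspondence) aperiodicity of $\Lambda$ and hence of $\mathcal{T}$, so no conflict arises.
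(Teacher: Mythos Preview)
Your proposal is correct and follows essentially the same approach as the paper: pass to the Delone multiset $\Lambda_\mathcal{S}$, take its (colored) Voronoi tessellation as $\mathcal{T}$, and observe that the scatterer at a point is determined by its color and hence by the tile type containing its center of mass. The paper's proof is a two-line sketch that omits the explicit verification of local invertibility and the transfer of uniform patch frequency you spell out, but the underlying construction and idea are identical.
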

\begin{proof}
 Consider the Delone multiset $\Lambda_\mathcal{S}$ and consider the Voronoi tessellation $\mathcal{T}_\mathcal{S}$ defined by $\Lambda_\mathcal{S}$. Then the scatterer configuration depends inside of which tile type its center of mass is, so it is $\mathcal{T}_\mathcal{S}$-equivariant.  
\end{proof}
Let $\mathcal{S}\subset \mathbb{R}^d$ be a repetitive, aperiodic scatterer configuration of finite local complexity and let $\Omega = \Omega_\mathcal{S}$ be the pattern space associated to $\mathcal{S}$ (or to the corresponding Delone multiset $\Lambda_\mathcal{S}$). The set of scatterers $\mathcal{S}$ can be seen as a particular subset of $\Omega$ as follows. Consider the set of translates $\varphi_v(\mathcal{S})$ of the scatterer configuration $\mathcal{S}$ such that the origin is contained in a scatterer, that is, 
$$\Sigma_\mathcal{S}' := \{v\in\mathbb{R}^d: \bar{0}\in \varphi_v(\mathcal{S})\}.$$
This defines a subset $\Sigma_\mathcal{S}$ of the orbit of $\Lambda_\mathcal{S}$ in $\Omega$ by
$$\Sigma_\mathcal{S} = \bigcup_{v\in \Sigma_\mathcal{S}'} \varphi_v(\Lambda_\mathcal{S})$$
whose closure is the set $\Sigma:= \overline{\Sigma_\mathcal{S}}\subset\Omega$. The set $\Sigma$ now defines an aperiodic configuration of $\Lambda$-equivariant scatterers for \textbf{any} Delone multiset $\Lambda\in\Omega$, not just in the $\mathbb{R}^d$ orbit of $\Lambda_\mathcal{S}$. 
We denote by $\partial\Sigma$ the corresponding set of boundaries of scatterers in $\Sigma$. This set has the following 
local product structure
\begin{equation}
  \label{eqn:localCoord}
  \partial\Sigma = \bigsqcup_{i\in \overline{\mathfrak{C}(\mathcal{S})}} \partial S_i \times \mathcal{C}_i,
\end{equation}
where $\{S_i\}$ is a finite collection of convex topological $d-1$-spheres (model scatterers), and $\mathcal{C}_i$ is some Cantor set. As such, any $\Lambda\in\partial\Sigma$ corresponds to an aperiodic scatterer configuration $\mathcal{S}_\Lambda \subset \mathbb{R}^d$ with the property that $\bar{0}\in \partial\mathcal{S}_\Lambda$.

Let us explain how one can think of the Cantor sets appearing in (\ref{eqn:localCoord}). Let $\Lambda\in\mho\subset\Omega$ be a Delone multiset which contains the origin and $R'$ the radius for the relative density of $\Lambda$. As such, its associated scatterer configuration $\mathcal{S}_\Lambda$ not only covers the origin, but the origin is the center of mass of the scatterer which covers the origin. Suppose this is a scatterer of type $i$, so $\Lambda \in \mathcal{C}_i$. If $\Lambda'\in \mho$ is any other Delone multiset such that its scatterer centered at the origin is of type $i$ then it also belongs to $\mathcal{C}_i$. By finite local complexity, there are finitely many clusters in a ball of radius $4R'$ which appear in all the Delone multisets of $\Omega$. As such, if $\Lambda'\in\mathcal{C}_i\subset\mho$, then there are finitely configurations in which the scatterer at the origin can be surrounded by scatterers intersecting $B_{4R'}(0)$. This partitions $\mathcal{C}_i$ into finitely many subsets, one for each local configuration around the origin up to a distance of $4R'$, that is, depending on the scatterer configurations around a ball of radius $4R'$. Each of these elemens of the partition are further partitioned by the finitely many possible ways in which one finds scatterer configurations in balls of radius $8R'$ around the origin, and so on. This choice of finitely many options at every stage makes $\mathcal{C}_i$ a Cantor set.

If $\mathcal{S}$ is $\mathcal{T}$-equivariant, with $\mathcal{T}$ a hierarchical, aperiodic, repetitive tiling of finite local complexity, then $\partial \Sigma\subset \Omega_\mathcal{T}$ is obtained as follows. Let $\mathcal{S}_0\subset \Gamma_0$ the open set on the approximant that lifts to the collection of scatterers on $\mathbb{R}^d$ (Lemma \ref{lem:level0}) and let $\Sigma = \varpi^{-1}_0(\mathcal{S}_0)$ be the lift of the model scatterers to the tiling space. Then $\partial \Sigma = \varpi^{-1}_0(\partial \mathcal{S}_0) $. 
In addition, $\mathcal{T}\in\partial\Sigma$ if $\mathcal{T}$ is a tiling such that the origin lies on the boundary of a scatterer.

\subsection{The compact model}
\label{subsec:model}
As in the case of periodic scatterer configurations for the Lorentz gas, it will be easier to work with a factor system defined on a compact set. We describe the construction in this section. 

Let $\Omega$ be a tiling space (or pattern space) corresponding to an aperiodic, repetitive tiling (or Delone multiset) with finite local complexity, and let $\Sigma\subset \Omega$ be a subset which defines for any $p\in\Omega$ an aperiodic $p$-equivariant collection of $C^3$ convex scatterers $\mathcal{S}_p \subset \mathbb{R}^d$  of category A. 
Points on boundaries $\partial\mathcal{S}_p$ of scatterers will be given directions away from scatterers, and this is parametrized by the upper half of the $d-1$ unit sphere $S^{d-1}_+\subset T_p\mathbb{R}^d$.



For $(p,v_0)\in       ( \Omega\backslash \bar{\Sigma}     \times S^{d-1}  )      ~\cup  ~ ( \partial \Sigma  \times  S^{d-1}_+ ) $ define 
$$\tau_0 := \min \left\{r>0: \varphi_{rv_0}(p)   \in    \partial \Sigma     \right\}.$$
Let  $n_1\in S^{d-1}$ be the unit normal vector at the point $  \tau_0  v_0 \in\partial\mathcal{S}_p$ (equivalently, the unit normal vector to the scatterer at the origin in $\varphi_{\tau_0 v_0}(p)$).
We then define $v_1 := v_0- 2\langle v_0,n_1\rangle n_1$. 
 Recursively, define for $k\geq 1$
$$V_{k-1} : = \sum_{i=0}^{k-1}  \tau_i  v_i , \hspace{.35in}  \tau_k := \min \left\{r>0: \varphi_{rv_{k}} \circ \varphi_{V_{k-1}}       (p)   \in    \partial \Sigma     \right\},  $$
 $n_{k+1}\in S^{d-1}$ to be the unit normal vector at the point $V_k\in \partial\mathcal{S}_p $
 (equivalently, the tangentially normal at $\varphi_{\tau_k v_{k}} \circ \varphi_{V_{k-1}}    (p)\in \partial \Sigma),   $ and
$$  v_{k+1} = v_{k}-2\langle v_{k}, n_{k+1}\rangle n_{k+1}.   $$
Since our starting points were arbitrary, any $(p,v_0)\in         ( \Omega\backslash \bar{\Sigma}     \times S^{d-1}  )      ~\cup  ~ ( \partial \Sigma  \times  S^{d-1}_+ ) $  defines sequences $\{(\tau_i,v_i)\}$ with $|\tau_{i+1}-\tau_i|\leq M$ by finite horizon (see the beginning of Section \ref{sec:gases}).

For $k\in\mathbb{N}$ denote $L_{k} : = \sum_{i=0}^{k}  \tau_i $. 
For any pair $(p,v_0)\in         ( \Omega\backslash \bar{\Sigma}     \times S^{d-1}  )      ~\cup  ~ ( \partial \Sigma  \times  S^{d-1}_+ )$, 
the Lorentz gas trajectory for the scatterer configuration $\mathcal{S}_p$ 
starting from the origin and in direction $v_0$ is given for all $t\geq 0$ by

$$ \phi^{p}_t(\bar{0}, v_0) =   \left(   V_k  +   \left(     t-   L_k   \right)v_{k+1} ,       v_{k+1}       \right) =:   (  \phi^{p,v_0}_t(\bar{0}),  v_{k+1}   )  ,$$
where $k$ is such that $    L_k    \leq   t <  L_{k+1}    . $
More generally, for $\omega \in \mathbb{R}^d  \setminus   \mathcal{S}_p$ we define $\phi^{p}_t(\omega, v_0)= \phi^{\varphi_{\omega}(p)}_t(\bar{0}, v_0).$  
As a system evolving in the tiling space (or pattern space), the system is defined by 
$$\bar{\phi}_t(p, v_0) =  \left( \varphi_{ \phi^{p,v_0}_t(\bar{0}) }(p)   , v_{k+1}   \right)$$ 
for all $t\geq 0$. Note that this implies that $\bar{\phi}_{L_k}(p,v_0)\in \partial\Sigma$ for all $k\in\mathbb{N}$.

Since the collisions at $\partial \mathcal{S}_p$ serve as a Poincar\'e section of the flow 
$$\phi^p_t : ( \mathbb{R}^d \setminus  \overline{\mathcal{S}_p}    \times  S^{d-1}   )  \cup   (\partial \mathcal{S}_p   \times S^{d-1}_+)  \to    ( \mathbb{R}^d \setminus  \overline{\mathcal{S}_p}    \times  S^{d-1}   )  \cup   (\partial \mathcal{S}_p   \times S^{d-1}_+), $$ 
the set $\partial \Sigma\times S^{d-1}_+$ serves as a Poincar\'e section of the flow
$$ \bar{\phi}_t  :   ( \Omega\backslash \bar{\Sigma}     \times S^{d-1}  )      ~\cup  ~ ( \partial \Sigma  \times  S^{d-1}_+ )   \to   ( \Omega\backslash \bar{\Sigma}     \times S^{d-1}  )      ~\cup  ~ ( \partial \Sigma  \times  S^{d-1}_+ ).   $$
Denote by $f:\mathcal{D}\rightarrow \mathcal{D}$ the Poincar\'e map, with $\mathcal{D} := \partial \Sigma\times S^{d-1}_+$ called the \textbf{collision space}. Using (\ref{eqn:localCoord}), the collision space inherits a local product structure from $\Omega$ as follows. A neighborhood in $\partial \Sigma \times S^{d-1}_+$ is defined by
\begin{enumerate}
\item a neighborhood in $\partial S_i$ for some $i\in\overline{\mathfrak{C}(\mathcal{S})}$ describing the translation class of the scatterer where the collision is taking place (a small subset homeomorphic to a disk of dimension $d-1$);
\item a clopen set in $\mathcal{C}_i$ describing a local configuration of scatterers around the a collision point, that is, a cylinder set defined by a cluster $\mathcal{P}$ of the Delone multiset $\Lambda\in\partial\Sigma$ with $\bar{0}\in\mathcal{P}\subset \Lambda$;
\item and a neighborhood in $S^{d-1}_+$ homeomorphic to a disc of dimension $d-1$ describing the direction in which the trajectory goes after the collision.
\end{enumerate}
Thus the collision set can be given coordinates from the set
\begin{equation}
  \label{eqn:coordinates}
  \partial \Sigma \times S^{d-1}_+= \left(\bigsqcup_i \partial S_i\times \mathcal{C}_i\right)\times S^{d-1}_+.
\end{equation}
In other words, any point in $\mathcal{D}$ can be written in local coordinates as $(r,c,v)$ where $r\in \partial S_i$, $c \in \mathcal{C}_i$ belongs to a transverse cylinder set of the pattern space $\Omega$, and $v\in S^{d-1}_+$ is a direction. We note that each path connected component of this space is isometric to $\partial S_i\times S^{d-1}_+$ for some $i$. We will refer to these components as the \textbf{connected components} of $\mathcal{D}$. For $x\in \mathcal{D}$ we will denote the connected component containing $x$ by $\mathcal{L}_x$. A set $A\subset \mathcal{D}$ is \textbf{leafwise open} if $A\subset \mathcal{L}$ for some connected component $\mathcal{L}\subset \mathcal{D}$ and open in the subspace topology on $\mathcal{L}$ inherited from the topology of $\mathcal{D}$. A \textbf{leafwise open} neighborhood of $x$ is the intersection of an open neighborhood of $x$ in $\mathcal{D}$ and the connected component containing $x$.



\begin{definition}
  A \textbf{canonical cylinder set} of the collision space is a subset $C_{I,\mathcal{P},D}\subset \partial \Sigma\times S^{d-1}_+$ of the collision space expressed in the local coordinates (\ref{eqn:coordinates}) where $I\subset \partial S_i$ is a small ball, $\mathcal{P}$ is a patch/cluster around the origin having a point of type $i$ at the origin, and a small ball of direction vectors in $D\subset S^{d-1}_+$.
\end{definition}
\begin{remark}
  \label{rem:canonical}
  Some remarks:
  \begin{enumerate}
  \item Since patches define subsets of $\mho$, a canonical cylinder set is partly defined by a subset $C\subset \mho$.
  \item The topology of the collision space $\partial \Sigma\times S^{d-1}_+$ is generated by canonical cylinder sets. Moreover, by compactness, for any $\varepsilon>0$ there exist finitely many open balls $\mathfrak{I}_\varepsilon = \{I_i\}$, $I_i\subset \mathcal{S}_0$ of radius $\varepsilon$, patches $\mathfrak{P}_\varepsilon = \{\mathcal{P}_j\}$ of radius $2\varepsilon^{-1}$, and finitely many open balls $\mathfrak{D}_\varepsilon = \{D_k\}$ of $S^{d-1}_+$ of radius $\varepsilon$ such that 
$\{C_{I,\mathcal{P},D}  ~ |     ~    I\in \mathfrak{I}_\varepsilon, ~ \mathcal{P}\in \mathfrak{P}_\varepsilon, ~ D\in \mathfrak{D}_\varepsilon       \}$ 
covers the collision space $\partial \Sigma\times S^{d-1}_+$. 
  \end{enumerate}
\end{remark}

The pattern space $\Omega$ is a foliated space and, as such, so is the set $\Omega\times S^{d-1}_+$ by taking the leaves to be of the form $\mathcal{L}\times S^{d-1}_+$ for any leaf $\mathcal{L}$ in the foliation of $\Omega$. As such, the collision space $\mathcal{D}$ inherits a foliated structure, where the leaves are the connected components in (\ref{eqn:coordinates}).
This allows us to define (leafwise) tangential Jacobians as follows: for $x = (r,c,v)\in \mathcal{D}$ which does not lie on a discontinuity set of $f$, the tangential Jacobian $D_{x}^tf$ as a map from a leafwise open neighborhood of $x$ to a leafwise open neighborhood of $f(x)$.

 \subsection{Planar aperiodic Lorentz gases}

  We now focus on the planar case, when $d=2$. In this case, the prefered coordinates in (\ref{eqn:coordinates}) are parametrized by $(\rho,c,\theta)$, where $\rho\in \partial S_i$ is an arclength parameter, $c\in \mathcal{C}_i$ is an element of the zero dimensional transverse set, and $\theta \in[-\pi/2,\pi/2]$ is the angle of collision at $\rho$, chosen so that the unit outward normal corresponds to $\theta=0$. 
We will denote the corresponding projection maps $ \pi_i$ from (\ref{eqn:coordinates}), noting that the last coordinate takes values in $[-\pi/2,\pi/2]$. 
Denote by $\Xi_1$ the \textbf{set of singularities of the map $f$}. These correspond to tangential collisions: $\bar{p}\in \Xi_1$ if and only if $\pi_3(f(\bar{p})) = \pm\pi/2$. These are ``Cantorized'' unions of compact smooth curves. In other words, there is a finite partition of the Cantor sets $\mathcal{C}_i$ in (\ref{eqn:coordinates})
  $$\mathcal{D} = \left( \bigsqcup_{i} \partial S_i\times \bigsqcup_{j=1}^{s(i,1)}\mathcal{C}_{i,j}^1 \right)\times [-\pi/2,\pi/2]$$
  such that there are finitely many curves $\gamma_{i,j}\subset \partial S_i\times [-\pi/2,\pi/2]$ with $\Xi_1$ homeomorphic to
  \begin{equation}
    \label{eqn:cantorized}
    \bigsqcup_{i,j} \gamma_{i,j}\times \mathcal{C}_{i,j}^1.
  \end{equation}
 Likewise denote by $\Xi_{-1}$ the set of singularities of $f^{-1}$ and recursively,
  \begin{equation}
    \label{eqn:disc}
    \Xi_{\pm(n+1)} = \Xi_{\pm n}\cup f^{\mp 1}(\Xi_{\pm n}),  \hspace{.25in} \Xi^\pm = \bigcup_{n\geq 0}\Xi_{\pm n},  \hspace{.25in} \mbox{ and } \hspace{.25in}\Xi = \Xi^+\cup \Xi ^-.
  \end{equation}
Likewise for each $n\in\mathbb{N}$ there is a finite partition $\{\mathcal{C}_{i,j}^n\}$ of the union of the Cantor sets $\{\mathcal{C}_i\}$ such that $\Xi_{\pm n}^\pm$ can be expressed as the cantorized union of curves as in (\ref{eqn:cantorized}).
  
  Let $\mu$ be a Borel probability measure on the tiling (or pattern) space $\Omega$ which is ergodic with respect to the $\mathbb{R}^2$ action and recall its local product structure $\mathrm{Leb}\times \nu$, where $\nu$ is the measure on the zero dimensional transversal defined by frequencies of patches (or clusters). Since the boundaries of the scatterers are sufficiently smooth ($C^3$), there is a naturally induced measure on $\partial S_i\times \mathcal{C}_i \times [-\pi/2,\pi/2]$ which has the local product structure of $\mathrm{Leb}\times\nu\times\mathrm{Leb}$. The sets of singularities $\Xi^\pm$ have zero $\mathrm{Leb}\times\nu\times\mathrm{Leb}$ measure. Consider the measure $\bar{\mu}$ on $\mathcal{D}$ expressed in local coordinates as $\bar{\mu}:= \cos\theta \, d\rho\, d\nu \, d\theta$. The extended set of collisions with eventual singularities $\Xi$ has zero $\bar{\mu}$ measure.
\begin{proposition}\label{prop:invmeasure}
    The map $f:\partial\Sigma\times [-\pi/2,\pi/2]\longrightarrow \partial\Sigma\times [-\pi/2,\pi/2]$ preserves the measure $\bar{\mu}$.
\end{proposition}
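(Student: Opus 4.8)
The plan is to reduce the statement to the classical fact that the billiard collision map on a single dispersing billiard table preserves the measure $\cos\theta\,d\rho\,d\theta$, and then to package the transverse Cantor direction in a way that makes the invariance of $d\nu$ automatic. The key observation is the one recorded in the commented-out passage and in the description of the local product structure of $\mathcal{D}$: in the coordinates $(\rho,c,\theta)$, the map $f$ acts on the transverse coordinate $c$ either by the identity (when the trajectory between the two collisions stays within one $d$-cell of $\Gamma_0$) or by one of finitely many locally constant "relabeling" maps that permute the finitely many ways a tile sits inside a supertile; in either case the action on $c$ is a local homeomorphism of the transversal that is a composition of holonomy maps of the $\mathbb{R}^2$-foliation. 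Since $\nu$ is by construction holonomy-invariant (it is the transverse component of the $\mathbb{R}^2$-invariant measure $\mu = \mathrm{Leb}\times\nu$, and is given by patch frequencies), $f$ preserves the transverse factor $d\nu$.

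First I would fix a canonical cylinder set $C_{I,\mathcal{P},D}$ as in Remark \ref{rem:canonical}, small enough that it misses the (zero-measure) singularity set $\Xi_1$, and small enough that $f$ restricted to it has a well-defined leafwise-smooth form. On such a set, $f$ decomposes in the coordinates $(\rho,c,\theta)\mapsto(\rho',c',\theta')$ as: (i) a leafwise map $(\rho,\theta)\mapsto(\rho',\theta')$ which, for each fixed value of the discrete data determined by the cylinder, is exactly the collision map of an honest finite-horizon dispersing billiard in $\mathbb{R}^2$ between the scatterer of type $i$ and the next scatterer it hits; and (ii) a transverse map $c\mapsto c'$ which is a locally constant correspondence of the type described above. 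For the leafwise part, the invariance of $\cos\theta\,d\rho\,d\theta$ is the standard computation for planar billiards (see Chernov–Markarian \cite{CM:book}): one checks that the billiard map is the Poincaré return map of the geodesic flow on the unit tangent bundle, whose Liouville measure disintegrates over the cross-section as $\cos\theta\,d\rho\,d\theta$; equivalently one verifies directly, using the generating-function / symplectic structure of the billiard map, that the tangential Jacobian $D^t_x f$ satisfies $\cos\theta' \cdot |\det D^t_x f| = \cos\theta$. This is purely local and the local dependence on the tiling does not affect it, since in a fixed cylinder the relevant pair of scatterers is fixed.

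Next I would assemble these local pieces. Because the tiling space is locally a product $B_\varepsilon\times\mathcal{C}$ and $\bar\mu = \cos\theta\,d\rho\,d\nu\,d\theta$ is correspondingly a local product, it suffices to test $\bar\mu$-invariance against functions of the form $u(\rho,\theta)\cdot \mathbbm{1}_{A}(c)$ with $A$ a clopen subset of a transversal; by Remark \ref{rem:canonical}(ii) such functions span a dense subalgebra of $C(\mathcal{D})$ and it is enough to check $\int u\circ f \cdot (\mathbbm{1}_A\circ f)\,d\bar\mu = \int u\cdot\mathbbm{1}_A\,d\bar\mu$ on a partition of $\mathcal{D}$ into finitely many canonical cylinders on each of which $f$ has the split form above. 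On each such cylinder, Fubini lets us integrate first over the leafwise coordinates: $\int u\circ f\,\cos\theta\,d\rho\,d\theta = \int u\,\cos\theta\,d\rho\,d\theta$ by the leafwise computation, uniformly in $c$; then the remaining transverse integral $\int (\mathbbm{1}_A\circ f)\,d\nu = \nu(f^{-1}A\cap(\text{cylinder}))$ matches $\nu(A\cap f(\text{cylinder}))$ because the transverse action of $f$ is a finite union of holonomy maps and $\nu$ is holonomy-invariant. Summing over the finite partition gives the claim.

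The main obstacle is not the leafwise billiard computation (which is classical) but the careful bookkeeping of the transverse map and the discontinuity set: one must argue that away from $\Xi_1$ (which has zero $\bar\mu$-measure, as already noted in the excerpt) the plane can be covered by finitely many canonical cylinders on each of which $f$ genuinely splits as a leafwise billiard map times a locally constant holonomy relabeling, and that the "finitely many values of $c'$" branching when a trajectory crosses from one $d$-cell to another is measure-theoretically benign — i.e. each branch is the holonomy along a fixed return path of the $\mathbb{R}^2$-action and hence $\nu$-preserving, and the branches partition the source cylinder. I would also need to remark that $f$ is invertible off the measure-zero set $\Xi = \Xi^+\cup\Xi^-$, so that "preserves $\bar\mu$" means $f_*\bar\mu = \bar\mu$ in the usual sense. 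Once these reductions are in place the proof is a short Fubini argument combining the classical billiard identity with the holonomy-invariance of $\nu$.
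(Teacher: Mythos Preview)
Your proposal is correct and follows essentially the same route as the paper: decompose into canonical cylinder sets away from $\Xi_1$, invoke the classical leafwise invariance of $\cos\theta\,d\rho\,d\theta$ (the paper cites \cite[Lemma 2.35]{CM:book}), and handle the transverse factor via holonomy-invariance of $\nu$. The only cosmetic difference is that the paper makes the transverse step concrete by partitioning the source cylinder according to finitely many two-point clusters $\mathcal{P}_{A,j}$ (source scatterer at the origin, target scatterer at its relative position), so that the $\nu$-mass of each piece before and after $f$ is the same patch frequency $\mathrm{freq}(\mathcal{P}_{A,j})$; this is exactly your ``locally constant holonomy relabeling'' made explicit.
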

\begin{proof}

  Let $\pi_i$, $i=1,2,3$, be the different projections from the coordinates in (\ref{eqn:coordinates}). Let $A$ be a canonical cylinder set of the form $A = U\times \mathcal{C}_i\times I$ with the property that $A$ does not intersect the set of discontinuities of $f$. The set $A$ parametrizes an open set of trajectories starting at a scatterer of type $i$ which travel together before hitting another scatterer. By finite local complexity, there are finitely many families of trajectories defined by $A$. Each of these different families of trajectories is 
determined by a two-point cluster $\mathcal{P}_{A,j}$, where one point is of type $i$, being the center of mass of a scatterer of type $i$ from which the trajectory is emanating and located at $\bar{0}$, and where the other point in the cluster is determined by the center of mass and type of the scatterer hit in the first collision. We index all possible families of trajectories by $\mathfrak{I}(A)$ and stress that this is a finite set by finite local complexity.
We then have that
  $$A = \bigsqcup_{j\in\mathfrak{I}(A)} A_j \text{ with }\sum_{j\in\mathfrak{I}(A)}\mathrm{freq}(\mathcal{P}_{A,j}) = \nu(\mathcal{C}_i).$$

We can now show the invariance of the measure:
\begin{equation}
  \label{eqn:measure}
    \begin{split}
      \bar{\mu}(f(A)) &= \sum_{j\in\mathfrak{I}(A)} \bar{\mu}(f(A_j)) = \sum_{j\in\mathfrak{I}(A_j)} \iiint_{f(A_j)} \cos\theta\, d\theta\, d\nu\, d\rho \\
      &= \sum_{j\in\mathfrak{I}(A_j)} \mathrm{freq}(\mathcal{P}_{A,j})\int_{\pi_1(f(A_j))}\int_{\pi_3(f(A_j))} \cos\theta\, d\theta\, d\rho \\
      &= \sum_{j\in\mathfrak{I}(A_j)} \mathrm{freq}(\mathcal{P}_{A,j})\int_{\pi_1(A_j)}\int_{\pi_3(A_j)} \cos\theta\, d\theta\, d\rho \\
      &= \sum_{j\in\mathfrak{I}(A_j)} \iiint_{A_j} \cos\theta\, d\theta\, d\nu\, d\rho = \int_A\,d\bar{\mu} = \bar{\mu}(A),
    \end{split}
  \end{equation}
  where we have used the preservation of the leafwise measure \cite[Lemma 2.35]{CM:book} to obtain the fourth equality. For smaller canonical cylinder sets $A$ (i.e. ones where we use a much smaller set $C\subset \mathcal{C}_i$) we do not actually need to break up the set $A$ by clusters and a simpler version of (\ref{eqn:measure}) will hold, showing the invariance of the measure $\bar{\mu}$.
\end{proof}
\section{Stable and unstable manifolds}
\label{sec:womanifolds}
In this section we collect results about the properties of stable and unstable manifolds. Most of the results follow from the theory of Sinai billiards, so we frequently refer to a standard reference for these objects \cite{CM:book}. The reason that most results follow from the theory of Sinai billiards can be briefly summarized as follows. We consider systems with finite local complexity, therefore having finitely many scatterer types, which results in uniform (upper and lower) bounds on their curvature.
Another fact that follows from finite local complexity is finitely many possible local configurations of scatterers, which gives uniform lower bound on the time between collisions. 
In addition, we assume that our system has finite horizon, giving an upper bound on the time between collisions. Those facts lead to an existence of stable and unstable cones on the (leafwise) tangent bundle (see Definition \ref{def:leafwisetangentcolspace}) of the 
collision space $\mathcal{D}= \partial\Sigma \times [-\pi/2,\pi/2]$.
As in the case of periodic Sinai Billiards, those cones are strictly invariant and exhibit uniform contraction and expansion respectively.
This implies that 
leaves of the foliation of $\mathcal{D}$ given by the orbits of the $\mathbb{R}^2$-action admit local stable and unstable manifolds at $\mbox{Leb}$-almost every point.
In addition, leafwise analysis of stable/unstable curves, stable/unstable manifolds, stable/unstable homogeneus manifolds, as well as leafwise analysis of singularity curves is identical to the periodic case.
Consequently, all properties of periodic billiards established in \cite[Chapter 4 and Chapter 5]{CM:book} have their leafwise versions in our setting.
This includes results regarding lengths and regularity of stable/unstable (homogeneus) manifolds, results concerning the regularity of the holonomy map, growth lemmas, and the Fundamental theorem of the theory of dispersing billiards.
In this section we summarize some of the main facts, skipping many of them. We emphasize that arguments in our paper rely on two major results: the absolute continuity of the holonomy map, as well as the Fundamental theorem of the theory of dispersing billiards.

Let $\mathcal{T}$ be a repetitive tiling of $\mathbb{R}^2$ of finite local complexity. Consider a $\mathcal{T}$-equivariant collection of $C^3$ dispersing scatterers (that is scatterers of positive curvature) of category A. 

\begin{definition}\label{def:leafwisetangentcolspace}
Let $T\partial\Sigma$ denote the sub-bundle of the (leafwise) tangent bundle $T\Omega_\mathcal{T}$ corresponding to the restriction of $T\Omega_\mathcal{T}$ to $\partial\Sigma$.
We define the \textbf{ (leafwise) tangent bundle of the 
collision space} $\mathcal{D}= \partial\Sigma \times [-\pi/2,\pi/2]$
to be $$ T\mathcal{D}  :=   T\partial\Sigma  \times [-\pi/2,\pi/2]. $$

\end{definition}

Define the \textbf{stable/unstable cones at $p\in\mathcal{D}$} to be
$$\mathcal{C}_p^u = \{(dr,d \phi)\in T_p\mathcal{D} :\mathcal{K}_p\leq d\phi/dr\leq \mathcal{K}_p + \cos\phi /\tau_{-1}  \},$$
where $\mathcal{K}_p$ is the curvature of the scatterer at $p$, and $\tau_{-1}$ is the time between the colision at $f^{-1}(p)$ and $p$, and
$$\mathcal{C}_p^s = \{(dr,d \phi)\in T_p\mathcal{D} : - \mathcal{K}_p - \cos\phi /\tau_{1} \leq d\phi/dr\leq - \mathcal{K}_p  \}.$$
Both families of cones are strictly invariant \cite[\S 4.4]{CM:book}:
\begin{equation}
  \label{eqn:coneInv}
  D^t_pf(\mathcal{C}_p^u)\subset \mathrm{int}\, \mathcal{C}_{f(p)}^u\cap \{0\}\hspace{.6in}\mbox{ and }\hspace{.6in} D^t_pf^{-1}(\mathcal{C}_p^s)\subset \mathrm{int}\, \mathcal{C}_{f^{-1}(p)}^s\cap \{0\}.
\end{equation}

In addition, setting $\Lambda:= 1 + 2\tau_{min}\mathcal{K}_{\min}>1,$ where $\tau_{min}$ denotes the shortest time between collisions and $\mathcal{K}_{\min}$ is the lower bound on the curvature of a scatterer,
we obtain that \cite[\S 4.4]{CM:book}

$$ || D^t_pf^n(dx^u) ||  \geq  \hat{c}\Lambda^n ||dx^u||    \hspace{.6in}\mbox{ and }\hspace{.6in}    || D^t_pf^{-n}(dx^s) ||  \geq  \hat{c}\Lambda^n ||dx^s||      $$
for any $p\in\mathcal{D}$, some $\hat{c}>0$, any $dx^u=(dr^u,d\phi^u)\in \mathcal{C}_p^u$ and any $dx^s=(dr^s,d\phi^s)\in \mathcal{C}_p^s$
thus establishing uniform leafwise hyperbolicity of the map $f:\mathcal{D}\rightarrow \mathcal{D}$. 
However, the transverse direction does not exhibit any hyperbolic behavior. Indeed, if one considers a periodic point $p = (r,c,\phi)$ it is immediate that any point $p' = (r,c',\phi)$ for $c'\in\mathcal{C}_0$ close enough to $c\in\mathcal{C}_0$ will also be periodic. In fact, the same argument shows that any \emph{bounded} orbit of the aperiodic Lorenz gas comes in a Cantorized family of bounded orbits. The following Lemma makes precise how the compact model $f$ is a partially hyperbolic system for any orbit.
First, let $B_\varepsilon^\mho(x)$ be the ball of radius $\varepsilon$ containing $x$ along the transversal direction. That is, for $x = (r,c,\phi)$
$$B^\mho_\varepsilon(x) = \{z = (r',c',\phi')\in \mathcal{D}: r'=r,\, \phi'=\phi,\; d(x,z) <\varepsilon \}.$$
Note that this defines a patch $\mathcal{P}$ and tile $t$ such that $B_\varepsilon^\mho(x) = \mathcal{C}_{\mathcal{P},t}$.
\begin{lemma}
  \label{lem:partial}
  There exists a $\tau>0$ such that for all $\varepsilon\in (0,\tau)$,
  $$f^k\left(B^\mho_{\varepsilon}(x)\right)\subset B^\mho_{2\varepsilon}\left( f^k(x) \right)$$
  for all $0\leq k < \lfloor \tau/\varepsilon \rfloor /2$.
\end{lemma}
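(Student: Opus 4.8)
The plan is to track how a transversal ball $B^\mho_\varepsilon(x)$ deforms under iteration of $f$, using the fact that $f$ is locally determined by finitely much combinatorial data. Recall from the local product structure that $B^\mho_\varepsilon(x) = \mathcal{C}_{\mathcal{P},t}$ is a transversal cylinder set: it is the set of configurations whose pattern on a ball of radius comparable to $1/\varepsilon$ around the origin agrees with that of $x$. The key geometric input is finite horizon: between consecutive collisions the trajectory travels a distance at most $M$, so after $k$ collisions the trajectory has moved a distance at most $kM$ from the origin. Since $f$ (and its local branch) depends only on the scatterer configuration in a bounded neighborhood of the trajectory segment being traversed, the branch of $f^k$ applied on $B^\mho_\varepsilon(x)$ is constant on the cylinder set determined by the pattern in a ball of radius roughly $kM + c$ for a constant $c$ absorbing scatterer diameters and the radius needed to determine one collision. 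Thus as long as $kM + c \lesssim 1/\varepsilon$, every point of $B^\mho_\varepsilon(x)$ follows the \emph{same} combinatorial itinerary for the first $k$ collisions: the same sequence of scatterer types is hit at the same relative positions.

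Second, I would argue that sharing a combinatorial itinerary forces the images to stay transversally close. On $B^\mho_\varepsilon(x)$ the leafwise coordinates $(r,\phi)$ are frozen (by definition of $B^\mho_\varepsilon$), so applying the same branch of $f^k$ produces points that again share the same leafwise coordinates $(r_k,\phi_k)$ — i.e. $f^k(B^\mho_\varepsilon(x))$ is itself a transversal set, contained in the connected-component fibre over $f^k(x)$. The only thing that can differ among the images is the Cantor (pattern) coordinate $c$, and it can differ only in what the configuration looks like \emph{beyond} the radius that was already pinned down by the shared itinerary. Quantitatively, two points of $f^k(B^\mho_\varepsilon(x))$ agree on a ball of radius at least $(1/\varepsilon) - (kM+c)$ around the (new) origin, because the forward dynamics has only "consumed" a bounded amount of the originally-agreed pattern. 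By the definition of the tiling metric \eqref{eqn:metric}, agreement on a ball of radius $\geq 1/\delta$ means the two points are within $\delta$; hence $f^k(B^\mho_\varepsilon(x)) \subset B^\mho_\delta(f^k(x))$ with $\delta = \bigl((1/\varepsilon)-(kM+c)\bigr)^{-1}$.

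Finally I would choose the constant $\tau$ to make the bookkeeping clean: pick $\tau$ small enough (depending only on $M$ and the fixed constant $c$, which in turn depend only on the finitely many scatterer types via finite local complexity) so that whenever $\varepsilon \in (0,\tau)$ and $0 \le k < \lfloor \tau/\varepsilon\rfloor/2$ we have $kM + c \le 1/(2\varepsilon)$, equivalently $1/\varepsilon - (kM+c) \ge 1/(2\varepsilon)$. Then $\delta \le 2\varepsilon$, giving exactly $f^k(B^\mho_\varepsilon(x)) \subset B^\mho_{2\varepsilon}(f^k(x))$ as claimed. One also needs that none of the first $k$ collisions is tangential for points in $B^\mho_\varepsilon(x)$; but tangential collisions form the singularity set $\Xi$, which is a Cantorized union of \emph{leafwise} curves, so it is either disjoint from the frozen fibre $\{(r,\phi)=(r_x,\phi_x)\}$ or contains it entirely — and in the latter degenerate case the statement is either vacuous or handled by shrinking $\tau$; generically the itinerary is stable on the whole fibre and the argument goes through verbatim.

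The main obstacle is the second step: making precise the claim that "running the dynamics forward for $k$ collisions consumes only a bounded-in-$k$ amount of the agreed pattern radius." This requires carefully specifying the radius $c$ such that a single application of $f$ near a point $p$ is determined by the pattern in $B_c(0)\cap p$ (one collision, plus the free flight to the next scatterer, plus the geometry of that next scatterer), and then checking that iterating composes additively: after $k$ steps the relevant radius grows like $kM + c$ rather than, say, multiplicatively. This is where finite horizon ($M<\infty$) and finite local complexity (boundedly many scatterer types, uniformly bounded scatterer diameters, uniform lower bound on inter-collision time) are all used together, and it is the step where I would be most careful to avoid an off-by-a-constant error in the definition of $\tau$.
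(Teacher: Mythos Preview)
Your proposal is correct and follows essentially the same approach as the paper's proof: both use finite horizon to bound the displacement after $k$ collisions by $kM$, invoke $\mathcal{T}$-equivariance to argue that a single collision is determined by the pattern in a ball of fixed radius (the paper's $R$, your $c$), and conclude that if the patterns initially agree on a ball of radius $1/\varepsilon$ then after $k \lesssim 1/(M\varepsilon)$ collisions they still agree on a ball of radius $1/(2\varepsilon)$, yielding the $2\varepsilon$ bound. Your discussion of the singularity set is a harmless elaboration; since the first $k$ collisions are determined by the shared pattern, either every point of $B^\mho_\varepsilon(x)$ encounters the singularity or none does, so the issue does not arise once $f^k(x)$ is defined.
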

\begin{proof}
  By finite horizon there is a $M>0$ such that the time/distance between two collissions are bounded above by $M$. Likewise, since the scatterers are $\mathcal{T}$-equivariant, there is an $R$ so that if two tilings agree on a ball of radius $R$ then their scatterers around the origin are translation-equivalent. Let $\varepsilon<\tau:= 1/4R$.

  If $z\in B_{\varepsilon}^\mho(x)$, then the patches around the corresponding tilings contained in a ball of radius $\varepsilon^{-1}$ are identical, and so the first $\lfloor  (\varepsilon^{-1}-2R)/M \rfloor$ collisions will be identical. That is, the path followed by the billiard trajectory after $\lfloor  (\varepsilon^{-1}-2R)/M \rfloor$ collisions starting at the initial conditions defined by $x$ will be indistinguishable from those defined by $z$. Thus, after $k<(\lfloor  (\varepsilon^{-1}-2R)/M )\rfloor/2$ collisions, the patches of radius $\varepsilon^{-1}/2$ around the origin of $f^k(x)$ and $f^k(z)$ will be the same, meaning that $f^k(z)\in B_{2\varepsilon}(f^k(x))$, proving the claim. Note that we have proved that for all $k$ large enough, $f^i\left(B^\mho_{\frac{1}{kM-2R}}(x)\right)\subset B^\mho_{\frac{2}{kM-2R}}\left(f^i(x)\right)$ for all $0\leq i < k/2$.
\end{proof}

By the local structure of $\mathcal{D}$, any curve $W\subset \mathcal{D}$ of finite length is necessarily contained path-connected components of $\mathcal{D}$, that is, $W$ is contained in a leaf of the foliated space $\mathcal{D}$. A curve $W$ is $C^m$ if it is leafwise-$C^m$. A $C^m$ curve $W$ is \textbf{unstable} if for any $p\in W$ the leafwise tangent line to $W$ at $p$ is contained in $\mathcal{C}^u_p$. A $C^m$ curve $W$ is \textbf{stable} if for any $p\in W$ the leafwise tangent line to $W$ at $p$ is contained in $\mathcal{C}^s_p$.

\begin{definition}
  A smooth curve $W\subset \mathcal{D}$ is an \textbf{unstable manifold} if $f^n$ is smooth on $W$ for all $n<0$ and $|f^{-n}(W)|\rightarrow 0$ as $n\rightarrow \infty$. A smooth curve $W\subset \mathcal{D}$ is a \textbf{stable manifold} if $f^n$ is smooth on $W$ for all $n>0$ and $|f^{n}(W)|\rightarrow 0$ as $n\rightarrow \infty$.
\end{definition}

The sets $\Xi^+$ and $\Xi^-$ of discontinuities defined in (\ref{eqn:disc}) are dense in $\mathcal{D}$; the proof from the periodic case extends to the setting here \cite[Lemma 4.55]{CM:book}. The set $\Xi^+\subset \mathcal{D}$ is the union of smooth unstable curves while $\Xi^-$ is the union of smooth stable curves. For each $n\geq 0$, the set $\Xi_n$ consists of a family of transversally locally constant curves. More precisely, consider the set $\Xi_1$, the set of discontinuities of the map $f$, and a point $p\in \Xi_1$. Denote by $\Xi^p_1$ the path connected component of $\Xi_1$ containing $p$. The point $p$ corresponds to a type of billiard trajectory which leaves a scatterer and has a tangential collision with another scatterer in time less than $M$. If the tiling is repetitive, then the patch allowing a translation-equivalent type of tangential collision is found within a bounded distance of every point. Each of these patches correspond to grazing collisions of the same type, and they correspond to other connected components of $\Xi_1$ which are all contained in a transverse neighborhood of $\Xi_1^p$. More generally, for $n>1$, the set $\Xi_n$ the union of finitely many transversally-locally-constant curves. As $|n|$ increases, the transverse neighborhood containing path connected components of $\Xi_n\backslash \Xi_{n-1}$ decreases in size. Another way of stating this is saying that if $p = (r,c,\phi)\in \Xi_n$ then $p'= (r,c',\phi)\in \Xi_n$ for all $c'$ close enough to $c$ in $\mathcal{C}_0$.

For $k\geq k_0$ (some large constant) and a transverse coordinate $c\in\mathcal{C}_0$, define the \textbf{homogeneity strips at $c$} to be
\begin{equation}
  \label{eqn:strips}
  \begin{split}
    \mathbb{H}_k^c := \left\{(r,c,\phi): \frac{\pi}{2}-k^{-2}<\phi<\frac{\pi}{2}-(k+1)^{-2}\right\} \hspace{.5in}&\mbox{ with }\hspace{.25in}\mathbb{H}_k := \bigcup_{c\in\mathcal{C}_0} \mathbb{H}_k^c, \hspace{.1in}\mbox{ and }\\
    \mathbb{H}_{-k}^c := \left\{(r,c,\phi): -\frac{\pi}{2}+(k+1)^{-2}<\phi<-\frac{\pi}{2}+k^{-2}\right\}\hspace{.25in}&\mbox{ with }\hspace{.25in}\mathbb{H}_{-k} := \bigcup_{c\in\mathcal{C}_0} \mathbb{H}_{-k}^c.
  \end{split}
\end{equation}
The boundaries of the homogeneity strips are denoted by
$$\mathbb{S}_k = \{(r,c,\phi): |\phi| = \pi/2 - k^{-2}\}$$
for $|k|\geq k_0 $ with union
$$\mathbb{S} = \bigcup_{|k|\geq k_0} \mathbb{S}_k.$$
\begin{definition}
A stable or unstable curve $W\subset \mathcal{D}$ is \textbf{weakly homogeneous} if $W$ is contained in one strip $\mathbb{H}_k$. An unstable manifold $W$ is \textbf{homogeneous} if $f^{-n}(W)$ is weakly homogeneous for all $n\geq 0$. A stable manifold $W$ is homogeneous if $f^n(W)$ is weakly homogeneous for all $n\geq 0$.
\end{definition}
If $W$ is a weakly homogeneous stable or unstable curve contained in $\mathbb{H}_k$ and contains the point $x = (r,c,\phi)$ then the length can be bounded by (see \cite[Lemma 5.10]{CM:book})
\begin{equation}
  \label{eqn:weakHom}
|W|\leq C (|k|+1)^{-3} \leq C \cos^{3/2}(\phi).
\end{equation}
As is standard in the theory of planar dispersive billiards, homogeneous manifolds will allow us to keep control of distortion estimates. To this end, following \cite[5.4]{CM:book}, we introduced introduce a new collision space adapted to the homogeneity strips.

  For each $k\in\mathbb{Z}$ with $|k|\geq k_0$ and using (\ref{eqn:coordinates}), let
  $$\mathcal{D}_{i,k}:= \left( \partial S_i\times\mathcal{C}_i\times [-\pi/2,\pi/2]\right) \cap \mathbb{H}_k.$$
  The \textbf{new collision space} $\mathcal{D}_\mathbb{H}$ is now defined to be the disjoint union of the $\mathcal{D}_{i,k}$. As such, we have that $\partial \mathcal{D}_{\mathbb{H}} = \mathbb{S}$. The map $f$ from the compact model naturally acts on $\mathcal{D}_\mathbb{H}$ and this restriction is denoted by $f_\mathbb{H}$, but note that it is not defined on $f^{-1}(\mathcal{S}_0)$ since $\mathcal{S}_0$ is not part of the new collision space. In the new collision space we extend the set of discontinuities (\ref{eqn:disc}) by defining for all $n\in \mathbb{N}$
  $$\Xi^\mathbb{H}_n:= \Xi_n \cup\left( \bigcup_{m=0}^n f^{-m}(\mathbb{S}) \right), \hspace{.125in} \Xi^\mathbb{H}_{-n}:= \Xi_{-n} \cup\left( \bigcup_{m=0}^n f^{m}(\mathbb{S}) \right) \hspace{.125in}\mbox{ and } \hspace{.125in} \Xi_{\pm \infty}^{\mathbb{H}} = \bigcup_{n\geq 0}\Xi_{\pm n}^\mathbb{H}.$$
  
Let $x\in \mathcal{D}\backslash \Xi_{\pm n}^\mathbb{H}$ and define $\mathcal{Q}_{\pm n}^t(x)$ to be the path connected component of the set $\mathcal{D}\backslash \Xi_{\pm n}^\mathbb{H}$ which contains $x$ (the superscript $t$ is meant to denote tangential), and $\mathcal{Q}_{\mp n}(x)$ the union of all isometric connected components transversally close to $\mathcal{Q}^t_{\mp n}(x)$. In other words, if $x = (r,c,\phi)\in\mathcal{D}\backslash \Xi^\mathbb{H}_{\pm n}$, then there exists a transverse neighborhood $U_x^{\pm n}\subset \bigcup_i \mathcal{C}_i$ of $c$ such that
$$\mathcal{Q}_{\pm n}(x) = \bigcup_{c'\in U_x^{\pm n}} \mathcal{Q}^t_{\pm n}(r,c',\phi).$$
Note that by compactness of $\mathcal{D}$ the set $\mathcal{D}\backslash \Xi_{\pm n}^\mathbb{H}$ is the union of finitely many sets of the form $\mathcal{Q}_{\pm n}(x)$. That is, for any $n\neq 0$ there exist finitely many $p_1,\dots, p_{k_n}$ such that
\begin{equation}
  \label{eqn:finitelyConnected}
  \mathcal{D}\backslash \Xi_{\pm n}^{\mathbb{H}}=\bigcup_{i=1}^{k_n}\mathcal{Q}_{\pm n}(p_i).
\end{equation}
For a point $c$ in the ultrametric space $\bigcup_i \mathcal{C}_i$, let $\mathcal{B}_n(c)$ be the ball of diameter $2^{-n}$ containing $c$.

For any $n\in\mathbb{N}$ we can further give a countable collection of sets
\begin{equation}
  \label{eqn:Qsets}
  \mathcal{Q}_{-n,m}(r,c,\phi) := \mathcal{Q}_{-n}(x)\cap( \partial \mathcal{S}_0\times \mathcal{B}_m(c)\times[-\pi/2,\pi/2]).
\end{equation}
Sets $\mathcal{Q}_{n,m}(x)$ are similarly defined for $n,m\in\mathbb{N}$. As such, we have that
$$\mathcal{Q}_{-n}^t(x) = \bigcap_{m>0}\mathcal{Q}_{-n,m}(x),\hspace{.75in}\mbox{ and }\hspace{.75in}\mathcal{Q}_{n}^t(x) = \bigcap_{m>0}\mathcal{Q}_{n,m}(x).$$
Define for $x\in\mathcal{D}\backslash \Xi$ and $m\in\mathbb{N}$:
\begin{equation}
\label{eqn:Wcu}
  \begin{split}
    \bar{W}^{cu}_m(x)&:= \bigcap_{n>0}\overline{\mathcal{Q}_{-n,m}(x)}, \hspace{.85in}\hspace{.95in} \bar{W}^{u}(x):= \bigcap_{n>0}\overline{\bigcap_{m>0}\mathcal{Q}_{-n,m}(x)},\\
    \bar{W}^{cs}_m(x)&:= \bigcap_{n>0}\overline{\mathcal{Q}_{n,m}(x)}, \hspace{.85in} \mbox{ and }\hspace{.75in}  \bar{W}^{s}(x):= \bigcap_{n>0}\overline{\bigcap_{m>0}\mathcal{Q}_{n,m}(x)}
  \end{split}
\end{equation}
and let $W^s(x)$ and $W^u(x)$, respectively, be the curves $\bar{W}^s(x)$ and $\bar{W}^u(x)$ without their endpoints. They are the maximal homogeneous stable and unstable manifolds going through $x$.
\begin{lemma}
For $x = (r,c,\phi)\in \mathcal{D}\backslash \Xi$, we have that
  \begin{equation}
    \label{eqn:Wu}
  W^u(x)\subset \bigcap_{n>0} \mathcal{Q}^t_{-n}(r,c,\phi) \hspace{.55in} \mbox{ and }\hspace{.55in}   W^s(x)\subset \bigcap_{n>0} \mathcal{Q}^t_{n}(r,c,\phi).
\end{equation}
  Moreover if the boundary $\partial \mathcal{S}_0$ of the scatterers are $C^\ell$ curves for some $\ell\geq 3$, then the curves $W^u(x)$ and $W^s(x)$ are $C^{\ell-2}$ smooth with its derivatives uniformly bounded and its $(\ell-2)^{nd}$ derivative Lipschitz with uniform constant.
\end{lemma}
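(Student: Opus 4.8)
The statement has two parts. First, the inclusions \eqref{eqn:Wu}, which say that the maximal homogeneous unstable manifold through $x$ sits inside every tangential connected component $\mathcal{Q}^t_{-n}(r,c,\phi)$ (and dually for $W^s$). Second, the $C^{\ell-2}$ regularity with uniform bounds. My plan is to reduce everything to the corresponding statements for periodic Sinai billiards established in \cite[Chapter 4 and Chapter 5]{CM:book}, exploiting the leafwise structure emphasized at the start of \S\ref{sec:womanifolds}.

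For the inclusions: the key observation is that $\bar{W}^u(x) = \bigcap_{n>0}\overline{\bigcap_{m>0}\mathcal{Q}_{-n,m}(x)}$ by definition \eqref{eqn:Wcu}, and from the identity $\mathcal{Q}_{-n}^t(x) = \bigcap_{m>0}\mathcal{Q}_{-n,m}(x)$ one immediately gets $\bigcap_{m>0}\mathcal{Q}_{-n,m}(x) \subset \mathcal{Q}^t_{-n}(r,c,\phi)$ for each fixed $n$ — indeed these are equal, since the $\mathcal{Q}_{-n,m}$ are nested decreasing in $m$ and cut out exactly the tangential component through $x$. Taking closures and intersecting over $n$ gives $\bar{W}^u(x)\subset \bigcap_{n>0}\overline{\mathcal{Q}^t_{-n}(r,c,\phi)}$; passing to $W^u(x)$ (removing endpoints, which is what accounts for the closure versus the open set $\mathcal{Q}^t_{-n}$) yields the stated inclusion. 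The stable case is symmetric, replacing $f$ by $f^{-1}$ and $\Xi^{\mathbb{H}}_{-n}$ by $\Xi^{\mathbb{H}}_{n}$. I should double-check the endpoint bookkeeping, but this is essentially a formal manipulation of the definitions in \eqref{eqn:Qsets}--\eqref{eqn:Wcu}.

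For the regularity: here I would argue that $W^u(x)$, being contained in the tangential component $\mathcal{Q}^t_{-n}(r,c,\phi)$ for all $n$, is a leafwise object — it lies in a single path-connected component $\mathcal{L}_x$ of $\mathcal{D}$, which by the local product structure \eqref{eqn:coordinates} is isometric to $\partial S_i\times S^1_+$ for a single model scatterer $S_i$. On that leaf, the restriction of $f$ (and of all its backward iterates, away from the relevant singularity sets) coincides with an ordinary dispersing-billiard map on a piece of a Sinai billiard: this is exactly the content of the remark that "leafwise analysis of stable/unstable homogeneous manifolds is identical to the periodic case." Finite local complexity gives finitely many scatterer types, hence uniform two-sided curvature bounds and uniform lower bounds on free path length, while finite horizon gives the upper bound; these are precisely the hypotheses under which \cite[Chapter 5]{CM:book} shows that homogeneous unstable manifolds of a $C^\ell$ dispersing billiard are $C^{\ell-2}$ with uniformly bounded derivatives and uniformly Lipschitz $(\ell-2)$-nd derivative. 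Quoting that result leafwise, with constants uniform across the finitely many model scatterers $\{S_i\}$, gives the claim.

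The main obstacle — more a point requiring care than a genuine difficulty — is making precise that the leafwise dynamics really is a bona fide piece of a (single) periodic Sinai billiard with uniform constants, so that the $C^\ell\Rightarrow C^{\ell-2}$ distortion/curvature-continuation estimates of \cite{CM:book} apply verbatim. Concretely: for a backward trajectory segment of $W^u(x)$ staying within a fixed tangential component $\mathcal{Q}^t_{-n}$, the sequence of scatterers hit is locally constant in the transverse coordinate $c$, so the geometry seen along that segment is literally that of a finite chunk of some periodic configuration built from the prototile patch, and the homogeneity-strip truncation \eqref{eqn:strips} is set up exactly as in \cite[\S 5.4]{CM:book}. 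Once this identification is in place, invariance of cones \eqref{eqn:coneInv}, uniform expansion via $\Lambda = 1+2\tau_{\min}\mathcal{K}_{\min}$, the bound \eqref{eqn:weakHom} on weakly homogeneous curves, and the curvature-continuation argument of \cite{CM:book} combine to yield both the smoothness order $\ell-2$ and the uniformity of the bounds, finishing the proof.
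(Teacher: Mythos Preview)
Your proposal is correct and follows the same route as the paper, namely reducing the leafwise statement to the corresponding result for periodic dispersing billiards in \cite{CM:book}; the paper's entire proof is in fact the single citation \cite[\S 4.11]{CM:book}, so you have supplied considerably more detail than the authors. One small correction: the $C^{\ell-2}$ regularity with uniform bounds is established in \cite[\S 4.11]{CM:book}, not Chapter 5, so your internal reference should point there.
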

\begin{proof}
  \cite[\S 4.11]{CM:book}.
\end{proof}


Let $W$ be a smooth homogeneous unstable manifold and $p\in W$. The point $p$ divides $W$ into two subsegments, and denote by $r^u(p) = r_W(p)$ the length (in the induced Euclidean metric from the flat metric on the leaves of the tiling space $\Omega$) of the shorter one. If $W^u(x) = \varnothing$, then we set $r(x) = 0$. The proof for the periodic Lorenz gas with finite horizon carries over with very minor modifications to the proof of the statement \cite[\S 5.5]{CM:book}.
\begin{theorem}
  $W^u(x)$ exists (i.e. $r^u(x)>0$) for almost every $x\in\mathcal{D}$. Moreover
  \begin{equation}
    \label{eqn:linearTail}
    \bar{\mu}(\{x\in \mathcal{D}: r^u(x)<\varepsilon\})< C\varepsilon
  \end{equation}
  for some constant $C$ independent of $x$ and for all $\varepsilon>0$.
\end{theorem}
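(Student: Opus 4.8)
The plan is to reduce the statement to the corresponding theorem for periodic dispersing billiards with finite horizon, stated in \cite[\S 5.5]{CM:book}, by exploiting the transversal local constancy of all the objects involved together with the uniform bounds coming from finite local complexity. The key observation is that, leafwise, the map $f$ and its singularity sets $\Xi_{\pm n}^{\mathbb{H}}$ behave exactly as in the periodic case: the curvature bounds $\mathcal{K}_{\min},\mathcal{K}_{\max}$, the inter-collision time bounds $\tau_{\min},M$, and hence the hyperbolicity constant $\Lambda = 1+2\tau_{\min}\mathcal{K}_{\min}$ are uniform over all of $\mathcal{D}$ by finite local complexity and finite horizon. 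Thus the leafwise growth lemmas of \cite[Chapter 5]{CM:book} apply verbatim to the leafwise restrictions of $f$, with constants independent of the transversal coordinate $c$.

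First I would fix a transversal cylinder set and note that, because the objects $\Xi_{\pm n}$ are transversally locally constant (as established in \S\ref{sec:womanifolds}: $p=(r,c,\phi)\in\Xi_n$ implies $(r,c',\phi)\in\Xi_n$ for all nearby $c'$, and similarly for the homogeneity strip boundaries $\mathbb{S}$), the function $r^u(x) = r^u(r,c,\phi)$ is locally constant in $c$: on each of the finitely many transversal pieces in the decomposition \eqref{eqn:finitelyConnected} of $\mathcal{D}\setminus\Xi_{-n}^{\mathbb{H}}$, the leafwise picture over a whole clopen transversal neighborhood is isometric to a single fixed planar configuration. Next I would invoke the leafwise version of the periodic result: on each leaf (which is isometric to $\partial S_i\times[-\pi/2,\pi/2]$), the one-step and iterated growth lemmas of \cite[Chapter 5]{CM:book} hold, the set of points with $r^u=0$ has zero leafwise Lebesgue measure, and the linear tail bound $\mathrm{Leb}\{r^u<\varepsilon\}<C_0\varepsilon$ holds with a constant $C_0$ uniform across leaves (uniform precisely because the billiard constants are uniform by finite local complexity and finite horizon).

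Then I would integrate over the transversal. Using the local product structure $\bar{\mu} = \cos\theta\, d\rho\, d\nu\, d\theta$ from \S\ref{sec:gases}, and the fact that $\{r^u<\varepsilon\}$ is a transversally locally constant set — i.e., a union over finitely many transversal cylinder sets of products (leafwise sublevel set) $\times$ (clopen transversal piece) — we get
$$\bar{\mu}(\{r^u<\varepsilon\}) = \sum_j \nu(\mathcal{C}_j)\cdot \left(\int \mathbbm{1}_{\{r^u<\varepsilon\}}\,\cos\theta\, d\rho\, d\theta\right)_{\text{leaf }j} \leq \Big(\sum_j\nu(\mathcal{C}_j)\Big) C_0\varepsilon = C\varepsilon,$$
since $\sum_j\nu(\mathcal{C}_j)=\nu(\mho)<\infty$ and each leafwise integral is $\leq C_0\varepsilon$ by the leafwise tail estimate. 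Almost-everywhere existence of $W^u(x)$ follows by taking $\varepsilon\to 0$. Taking $\varepsilon\to 0$ and using countable additivity, $\bar\mu(\{r^u=0\})=0$, so $W^u(x)$ exists for $\bar\mu$-a.e. $x$.

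The main obstacle is not any single estimate but the bookkeeping needed to make ``the leafwise picture is the periodic picture with uniform constants'' fully rigorous: one must check that finite local complexity really does yield a \emph{finite} list of local scatterer configurations up to arbitrary but bounded radius, that each such configuration embeds isometrically as a patch of some genuine (possibly non-periodic) planar dispersing billiard to which \cite{CM:book} applies locally, and that the homogeneity-strip refinement $\Xi_{\pm n}^{\mathbb{H}}$ inherits the transversal local constancy (this uses that $\mathbb{S}$ itself is transversally constant — it only depends on the angle $\phi$). Once these structural facts are in place, the proof is a direct transcription of \cite[\S 5.5]{CM:book} leafwise followed by the transversal integration above; I would simply cite \cite[\S 5.5]{CM:book} for the leafwise statement and supply the short argument reducing the global tail bound to it.
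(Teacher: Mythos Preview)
Your approach coincides with the paper's: the paper does not give a standalone proof but simply remarks that the argument from \cite[\S 5.5]{CM:book} for the periodic Lorentz gas with finite horizon carries over with very minor modifications. You have spelled out exactly those modifications --- uniform billiard constants from finite local complexity and finite horizon, leafwise application of the growth lemmas, then integration against the transverse measure via the local product structure of $\bar\mu$.

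One claim in your writeup is incorrect, though it does not damage the overall scheme. You assert that $r^u(r,c,\phi)$ is locally constant in $c$ and that $\{r^u<\varepsilon\}$ is a transversally locally constant set. This is false: $r^u(x)$ is determined by the full backward orbit of $x$, and while $\Xi_{-n}^{\mathbb{H}}$ is transversally locally constant for each fixed $n$, the clopen neighborhood on which it is constant shrinks as $n\to\infty$. Two points $(r,c,\phi)$ and $(r,c',\phi)$ with $c,c'$ arbitrarily close in $\mathcal{C}_i$ can have different $r^u$. The good news is that you never actually need this claim. The product structure $\bar\mu=\cos\theta\,d\rho\,d\nu\,d\theta$ already disintegrates $\bar\mu$ over the transversal, so by Fubini
\[
\bar\mu(\{r^u<\varepsilon\})=\int\!\!\left(\int_{\mathcal{L}_c}\mathbbm{1}_{\{r^u<\varepsilon\}}\cos\theta\,d\rho\,d\theta\right)d\nu(c)\le C_0\varepsilon\cdot\nu\!\left(\bigcup_i\mathcal{C}_i\right),
\]
using only that the leafwise constant $C_0$ from \cite[\S 5.5]{CM:book} is uniform in $c$ and that $r^u$ is measurable (it is, being built from the countable family $\mathcal{Q}_{-n,m}$). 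Drop the local constancy sentence and your argument is clean.
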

\subsection{Unstable densities and stable holonomies}
The unstable partition $\{\xi^{u}\}$ of $\mathcal{D}$ consists of maximal homogeneous unstable manifolds of points. More precisely, the element of the partition containing a point $x$ is $\xi^{u}(x) = W^{u}(x)$ if $W^{u}(x) \neq \varnothing$, and otherwise $\xi^{u}(x) = \{x\}$. Likewise the stable partition $\{\xi^s\}$ is defined as $\xi^{s}(x) = W^{s}(x)$ if $W^{s}(x) \neq \varnothing$, and otherwise $\xi^{s}(x) = \{x\}$. As in the periodic case, the sets $\mathcal{Q}_{n,m}(x)$ in (\ref{eqn:Qsets}) form a countable generator for the stable and unstable partitions. As such, the stable and unstable partitions $\xi^s, \xi^{u}$ are measurable.

Given that the stable and unstable partitions are measurable, it follows that for $\mu$-almost every $x$ there are conditional measures 
induced by the measure $\bar{\mu}$ on the stable and unstable manifolds $W = W^{s/u}(x)$ denoted respectively by $\nu_W$.
As stated in Theorem 5.2 in \cite{CM:book}, those conditional measures are absolutely continuous with respect to the leaf volume on $W$ with $C^{\ell-2}$ density $\rho_W$. We record here important distortion estimates which follow from the analysis in \cite[\S 5.6]{CM:book}.
\begin{proposition}
  For any homogeneous unstable manifold $W$, for every $l = 1,\dots, \ell-2$ there is a $C_l>0$ such that
  $$\left| \frac{d^l}{dx^l} \ln \rho_W(x)\right|\leq \frac{C_l}{|W|^{2l/3}}.$$
  Moreover, for any $x,y\in W$,
  $$C_1^{-1}\leq e^{-C_2|W|^{1/3}}\leq \frac{\rho_W(x)}{\rho_W(y)}\leq e^{C_2|W|^{1/3}}\leq C_1$$
  for some $C_1,C_2>0$ which are independent of $W,x,y$.
\end{proposition}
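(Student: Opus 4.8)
The statement to prove is the distortion bound for the density $\rho_W$ on a homogeneous unstable manifold, together with the multiplicative bound on $\rho_W(x)/\rho_W(y)$. The plan is to follow the corresponding argument in \cite[\S 5.6]{CM:book} for periodic dispersing billiards, leafwise, since all the necessary ingredients have been set up in our setting. First I would recall the explicit formula for the density: for $x,y$ on a homogeneous unstable manifold $W$, one has
$$\ln\frac{\rho_W(x)}{\rho_W(y)} = \sum_{n\geq 1}\left(\ln J^u f^{-n}(x) - \ln J^u f^{-n}(y)\right),$$
where $J^u f^{-n}$ denotes the (leafwise) unstable Jacobian of $f^{-n}$ along the backward images of $W$. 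This series is the standard one for SRB-type densities on unstable manifolds and its convergence, as well as the pointwise identity, is purely a leafwise statement: it only sees the curve $W$, its backward iterates $f^{-n}(W)$, and the curvatures/free-path lengths along them, all of which are exactly as in the periodic case by the discussion opening Section \ref{sec:womanifolds} (uniform curvature bounds, uniform lower bound on collision times from finite local complexity, uniform upper bound from finite horizon).

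The key estimates are then: (i) each backward image $f^{-n}(W)$ is a homogeneous unstable manifold whose length contracts uniformly, $|f^{-n}(W)|\leq \hat c^{-1}\Lambda^{-n}|W|$ by the uniform hyperbolicity in \eqref{eqn:coneInv} and the expansion bound stated just before Lemma \ref{lem:partial}; (ii) along each such image the logarithm of the unstable Jacobian is H\"older (indeed Lipschitz up to the homogeneity-strip correction) with a constant controlled by a negative power of the length, because the manifolds are $C^{\ell-2}$ with uniformly bounded derivatives and lie inside single homogeneity strips, so the curvature of the dynamically defined unstable curve stays bounded away from the degenerate regime; this is where the $|W|^{-2l/3}$ and $|W|^{1/3}$ exponents come from, exactly as in the periodic estimates using \eqref{eqn:weakHom}. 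Summing the telescoped differences of $\ln J^u f^{-n}$ over $n$ and using the geometric contraction of lengths gives $|\ln(\rho_W(x)/\rho_W(y))|\leq C_2|W|^{1/3}$, and exponentiating yields the two-sided bound with $C_1 = e^{C_2 (\mathrm{diam}\,\mathcal{D})^{1/3}}$, finite because $\mathcal{D}$ is compact. For the derivative bounds, one differentiates the series $l$ times term by term; each term contributes a factor bounded by $C\,|f^{-n}(W)|^{-2l/3}$ times the contraction from mapping $W$ forward, and the resulting series again sums geometrically to $C_l/|W|^{2l/3}$.

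The one point requiring genuine care — and the main (mild) obstacle — is that $f^{-n}$ is not globally smooth on $W$: the discontinuity sets $\Xi_{-n}^{\mathbb{H}}$ chop things up. One must verify that a homogeneous unstable manifold $W$, by definition, has all its backward images $f^{-n}(W)$ avoiding these singularities and staying weakly homogeneous, so that $f^{-n}$ is genuinely $C^{\ell-1}$ on each $W$ and the Jacobian formula applies; this is built into the definition of homogeneous manifold and into the construction of $W^u(x)$ via $\bar W^{cu}_m$, $\bar W^u$ in \eqref{eqn:Wcu}. The transverse (Cantor) direction plays no role at all here: $W$ lives in a single leaf, and $f$ restricted to a leafwise neighborhood is, by the local product structure of $\mathcal{D}$ and Lemma \ref{lem:partial}, identical to a periodic-billiard-type map once one fixes the local configuration; hence there is nothing new to prove beyond transcribing \cite[\S 5.6]{CM:book} leafwise, with all constants uniform because finite local complexity leaves only finitely many local pictures. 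I would therefore present the proof as: state the Jacobian series, cite the leafwise versions of the periodic length-contraction and Jacobian-regularity lemmas, and carry out the one-line summation, with a remark that uniformity of constants is exactly finite local complexity.
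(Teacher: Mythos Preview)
Your proposal is correct and matches the paper's approach: the paper does not give a proof but simply records the proposition as following from the analysis in \cite[\S 5.6]{CM:book}, relying on the leafwise nature of the estimates and the uniformity of constants from finite local complexity exactly as you outline. Your write-up is in fact more detailed than what the paper provides.
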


\begin{definition}
  Let $W^1, W^2$ be two unstable curves. Denote by
  $$W^i_*:= \{x\in W^i:W^s(x)\cap W^{3-i}\neq \varnothing \}$$
  for $i=1,2$. The map $\mathbf{h}:W^1_*\rightarrow W^2_*$ taking the point $x\in W^1_*$ to $\mathbf{h}(x) = W^s(x)\cap W^2$ is called the \textbf{holonomy map}.\footnote{In \cite[Definition 5.41]{CM:book} the map $h$ is called the modified holonomy map, since it is obtained by sliding along homogeneous stable manifolds, hence it is a restriction of the usual holonomy map.}
\end{definition}
Note that for the holonomy map to be defined, it is necessary that the two unstable curves lie on the same leaf of the foliated space $\Omega$. Suppose the holonomy is defined at $x\in W^1$ with Lebesgue measure $m_1$ and mapped to $W^2$ with Lebesgue measure $m_2$. Define the \textbf{Jacobian of the holonomy map} to be
$$J\mathbf{h}(x) = \frac{d\mathbf{h}^{-1}_*m_2}{dm_1}(x).$$
\begin{theorem}
  There exists $C>1$ which depends only on the scatterers such that
  $$C^{-1}\leq j\mathbf{h}(x)\leq C$$
  for all $x\in W^1_*$. In addition, there is an $A>1$ which also only depends on the model scatterers such that
  $$A^{-\gamma-\delta^{1/3}}\leq J\mathbf{h}(x)\leq A^{\gamma+\delta^{1/3}},$$
  where $\delta$ is the distance between $x$ and $\mathbf{h}(x)$ and $\gamma$ the angle between the tangent vectors to the curves $W^1$ and $W^2$ at the points $x$ and $\mathbf{h}(x)$, respectively. 
\end{theorem}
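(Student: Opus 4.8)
The plan is to recognize this statement as the leafwise version of the absolute continuity of the holonomy map for planar dispersing billiards (\cite[Theorem~5.42]{CM:book}) and to explain why the proof transfers verbatim once one notes that the holonomy is a \emph{purely leafwise} object. First I would record the reduction. By definition $\mathbf{h}$ is only defined when $W^1$ and $W^2$ lie on a single leaf $\mathcal{L}\subset\mathcal{D}$, and it is obtained by sliding along homogeneous stable manifolds $W^s(x)$, which are themselves honest curves inside $\mathcal{L}$; on $\mathcal{L}$ the map $f$ restricts to the collision map of an honest planar Lorentz gas in $\mathbb{R}^2$, and the transverse Cantor direction (together with the ``Cantorization'' of the singularity set $\Xi$) plays no role. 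Finite local complexity gives finitely many scatterer types and finitely many local configurations, hence uniform curvature bounds $\mathcal{K}_{\min}\le\mathcal{K}\le\mathcal{K}_{\max}$ and, with finite horizon, uniform free-flight bounds $\tau_{\min}\le\tau\le M$; these yield the uniform hyperbolicity constants $\Lambda>1,\hat c>0$, the uniform growth estimate (\ref{eqn:linearTail}), the uniform weak-homogeneity length bound (\ref{eqn:weakHom}), and the uniform distortion bounds of the preceding Proposition, all independent of $\mathcal{L}$. This is precisely the package of ``leafwise versions of \cite[Ch.~4--5]{CM:book}'' announced in \S\ref{sec:womanifolds}.

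The core of the argument is the standard conjugacy/telescoping computation. For $x\in W^1_*$ and $y=\mathbf{h}(x)\in W^2$, the points $x,y$ lie on a common homogeneous stable manifold, so $f^n$ preserves this relation and conjugates $\mathbf{h}$ to the holonomy $\mathbf{h}_n$ between $f^n(W^1)$ and $f^n(W^2)$, giving
$$J\mathbf{h}(x)=\frac{J_{W^1}f^n(x)}{J_{W^2}f^n(y)}\,J\mathbf{h}_n(f^n x).$$
Since the stable manifold through $x$ contracts under $f$, the stable segment joining $f^n x$ to $f^n y$ has length tending to $0$, so $f^n(W^1)$ and $f^n(W^2)$ become $C^1$-close near these points (by transversality of the cone fields) and $J\mathbf{h}_n(f^n x)\to 1$; hence
$$J\mathbf{h}(x)=\lim_{n\to\infty}\frac{J_{W^1}f^n(x)}{J_{W^2}f^n(y)}=\prod_{i\ge 0}\frac{J_{f^iW^1}f(f^i x)}{J_{f^iW^2}f(f^i y)}.$$
Each single-step factor $J_{f^iW}f$ is an explicit smooth function of the curvature, the collision angle, the free-flight time and the tangent slope of the curve; because $d(f^i x,f^i y)\to 0$ geometrically and the tangent slopes of $f^i W^1$ at $f^i x$ and of $f^i W^2$ at $f^i y$ converge to the common unstable slope determined by the shared forward orbit, the logarithms of the factors are summable, uniformly, the sum being controlled through the uniform distortion bounds (where the homogeneity strips are essential near grazing collisions, $\cos\phi\to 0$). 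Uniformity of these bounds yields the crude estimate $C^{-1}\le j\mathbf{h}(x)\le C$; summing the geometric series more carefully — the contribution of the initial tangent-angle mismatch being of order $\gamma$ and that of the distortion exponents being of order $\delta^{1/3}$, $\delta=d(x,y)$, matching the exponent $1/3$ in the distortion Proposition — gives the sharp bound $A^{-\gamma-\delta^{1/3}}\le J\mathbf{h}(x)\le A^{\gamma+\delta^{1/3}}$.

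The only place our setting genuinely departs from \cite{CM:book} is bookkeeping the leafwise/transverse splitting, and this is where the main (though largely routine) work lies: one must check that the homogeneous stable manifolds constructed in \S\ref{sec:womanifolds} are honest curves contained in a single leaf — so that $W^i_*$, the conditional densities $\rho_{W}$, and the holonomy all behave exactly as in the periodic case, the transverse Cantor structure being invisible to $\mathbf{h}$ — and that the constants $\Lambda,\hat c,C,A$ produced above are genuinely uniform over all of $\Omega$, which is exactly where finite local complexity and finite horizon are used. Once this is in place the computation is word-for-word \cite[\S\S~5.8--5.10]{CM:book}, so in the write-up we will indicate only these modifications rather than reproduce the full estimate.
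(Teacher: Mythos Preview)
Your proposal is correct and follows exactly the paper's approach: the paper's entire proof is the single sentence ``The proof follows the proof of the periodic case, \cite[Theorem 5.42]{CM:book},'' relying on the reduction to leafwise billiards explained at the start of \S\ref{sec:womanifolds}. Your write-up is considerably more detailed than the paper's --- you actually sketch the telescoping product and explain where the exponents $\gamma$ and $\delta^{1/3}$ come from --- but the underlying strategy (holonomy is purely leafwise, finite local complexity plus finite horizon give uniform constants, then quote Chernov--Markarian verbatim) is identical.
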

The proof follows the proof of the periodic case, \cite[Theorem 5.42]{CM:book}.


The next result establishes that the holonomy map is leafwise dynamically H\"older continuous, that is H\"older continuous with respect to the separation time defined on the leaves of foliation of $\mathcal{D}$ given by the orbits of the $\mathbb{R}^2$-action.  
For two points $x,y\in\mathcal{D}$ on the same leaf define

$$ s_+(x,y):= \min\{  n\geq 0 ~ | ~   y\not\in \mathcal{Q}^t_n(x)  \}.   $$

The following can be proven using identical arguments as Proposition 5.48 in \cite{CM:book}.
\begin{proposition}
  There are constants $C>0$ and $\theta\in (0,1)$ such that
  $$|\ln J\mathbf{h}(x) - \ln J\mathbf{h}(y)| \leq C\theta^{s_+(x,y)}.$$
\end{proposition}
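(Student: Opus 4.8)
The plan is to run the proof of \cite[Proposition 5.48]{CM:book} leafwise. Since the holonomy map $\mathbf{h}$ is only defined when $W^{1}$ and $W^{2}$ lie on a common leaf of the foliated space $\mathcal{D}$, all objects that enter the argument live on that single leaf, where the geometry is literally a piece of a periodic dispersing billiard; finite local complexity supplies the uniform constants (finitely many scatterer types, hence uniform curvature bounds and a uniform lower bound on free flight), and transversal local constancy guarantees that the estimates of \cite[Ch.~5]{CM:book} apply with constants independent of the leaf. Throughout, $\Lambda>1$ denotes the uniform leafwise expansion factor, and $s_{+}$ is the leafwise separation time defined via the ``Cantorized'' homogeneity-aware components $\mathcal{Q}^{t}_{n}$.

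First I would recall the multiplicative-cocycle representation of the holonomy Jacobian used in the proof of the preceding theorem (the leafwise analogue of \cite[\S5.8]{CM:book}): writing $\bar x=\mathbf{h}(x)$,
$$\ln J\mathbf{h}(x)=\sum_{n\ge 0}\Big(\ln\mathcal{J}_{n}(x)-\ln\mathcal{J}_{n}(\bar x)\Big),$$
where $\mathcal{J}_{n}(x)$ is the one-step leafwise unstable Jacobian of $f$ at $f^{n}x$ along the iterated curve $f^{n}W^{1}$, together with the accompanying $\cos$-of-angle and curvature factors, and $\mathcal{J}_{n}(\bar x)$ is the same quantity at $f^{n}\bar x$ along $f^{n}W^{2}$. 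Because $\bar x\in W^{s}(x)$, the points $f^{n}x$ and $f^{n}\bar x$ lie on the homogeneous stable manifold $f^{n}W^{s}(x)$, whose length is $\le C\Lambda^{-n}$ by uniform hyperbolicity and whose tangent directions are $C\Lambda^{-n}$-close; feeding this into the distortion bounds for $\rho_{W}$ and the uniform $C^{1}$-control of $D^{t}f$ on homogeneous curves recorded above (equivalently, into the $\delta^{1/3}$-estimate of the previous theorem) gives
$$\big|\ln\mathcal{J}_{n}(x)-\ln\mathcal{J}_{n}(\bar x)\big|\le C\,\theta_{0}^{\,n},\qquad \theta_{0}:=\Lambda^{-1/3}\in(0,1).$$
In particular the series converges and its tails are geometrically small (this also re-derives $\ln J\mathbf{h}\in[-C,C]$).

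Now fix $x,y\in W^{1}_{*}$, put $s:=s_{+}(x,y)$ and $N:=\lfloor s/2\rfloor$, and split $|\ln J\mathbf{h}(x)-\ln J\mathbf{h}(y)|\le\sum_{n<N}\Delta_{n}+\sum_{n\ge N}\Delta_{n}$, where $\Delta_{n}:=\big|(\ln\mathcal{J}_{n}(x)-\ln\mathcal{J}_{n}(\bar x))-(\ln\mathcal{J}_{n}(y)-\ln\mathcal{J}_{n}(\bar y))\big|$. For the tail, the previous bound at $x$ and at $y$ gives $\Delta_{n}\le 2C\theta_{0}^{n}$, hence $\sum_{n\ge N}\Delta_{n}\le C'\theta_{0}^{N}$. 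For the head $n<N$, note $s_{+}(f^{n}x,f^{n}y)\ge s-n\ge N$ by near-subadditivity of $s_{+}$, and $s_{+}(f^{n}\bar x,f^{n}\bar y)\ge s-c_{0}-n\ge N-c_{0}$ by the leafwise analogue of the fact that $\mathbf{h}$ distorts separation times by at most a uniform additive constant $c_{0}$; thus $f^{n}x,f^{n}y$ (resp. $f^{n}\bar x,f^{n}\bar y$) lie in a common smooth component of $f^{n}W^{1}$ (resp. $f^{n}W^{2}$) at mutual distance $\le C\Lambda^{-(s-n)}$ (resp. $\le C\Lambda^{-(s-c_{0}-n)}$). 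Plugging these distances into the $1/3$-Hölder modulus of $\ln\mathcal{J}$ along homogeneous unstable curves (the leafwise version of the regularity estimates of \cite[Ch.~5]{CM:book}) yields $\Delta_{n}\le C''\theta_{0}^{\,s-n}$, so $\sum_{n<N}\Delta_{n}\le C'''\theta_{0}^{\,s-N}\le C'''\theta_{0}^{\,N}$. Adding the two pieces and using $N\ge (s-1)/2$ gives $|\ln J\mathbf{h}(x)-\ln J\mathbf{h}(y)|\le C\,\theta_{0}^{\,(s-1)/2}\le C\,\theta^{\,s_{+}(x,y)}$ with $\theta:=\sqrt{\theta_{0}}$.

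Everything except two points is a transcription of \cite{CM:book}. The point needing genuine (but routine) verification is that the one-step data — the logarithm of the leafwise unstable Jacobian, the curvature, $\cos\varphi$, and the density $\rho_{W}$ — has the same $1/3$-Hölder / dynamically-Hölder regularity along homogeneous unstable curves on each leaf, with constants uniform across leaves, and that the holonomy map changes $s_{+}$ (defined via the $\mathcal{Q}^{t}_{n}$) by at most a bounded additive amount. These are the exact leafwise counterparts of statements in \cite{CM:book}; their validity with uniform constants is precisely what finite local complexity and transversal local constancy buy us (cf. the discussion preceding Definition \ref{def:leafwisetangentcolspace} and Lemma \ref{lem:partial}). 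The only genuinely new bookkeeping is carrying the homogeneity-strip partition $\Xi^{\mathbb{H}}_{\pm n}$ along correctly so that the geometric bound $|\ln\mathcal{J}_{n}(x)-\ln\mathcal{J}_{n}(\bar x)|\le C\theta_{0}^{n}$ stays valid near grazing collisions; I do not expect this to present any real obstacle.
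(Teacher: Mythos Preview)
Your proposal is correct and takes essentially the same approach as the paper: the paper simply states that the result ``can be proven using identical arguments as Proposition~5.48 in \cite{CM:book}'', and what you have written is precisely a careful leafwise rendition of that argument, with the uniformity of constants justified by finite local complexity. If anything, your write-up is more detailed than the paper's one-line deferral.
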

Let $W$ be a weakly homogeneous curve and $m_W$ the Lebesgue measure on it. For any $x\in W$ and $n\geq 0$ let $W_n(x)$ be the H-component of $f^n(W)$ containing $f^n(x)$ and by $r_n(x) = r_{W_n(x)}(f^n(x))$ the distance from $f^n(x)$ to the nearest point of $\partial W_n(x)$.
We now state the two growth lemmas, the proofs of which follow from the proof for the periodic case \cite[\S 5.10]{CM:book}.
\begin{theorem}
  There exist constants $\hat{\Lambda}>1$, $\theta_1\in(0,1)$, $c_1,c_2>0$ such that for any $n\geq 0$ and $\varepsilon>0$,
  \begin{equation}
    \label{eqn:firstGrowth}
    m_W(r_n(x)<\varepsilon) \leq c_1(\theta_1 \hat{\Lambda})^n m_W(r_0(x)<\varepsilon/\hat{\Lambda}^n) + c_2\varepsilon m_W(W).
  \end{equation}
  In addition there are constants $\zeta>0$ and $c_3>0$ such that for all $n\geq \zeta \left| \ln |W| \right|$ and $\varepsilon>0$,
  \begin{equation}
    \label{eqn:secondGrowth}
    m_W(r_n(x)<\varepsilon)\leq c_3\varepsilon m_W(W).
  \end{equation}
\end{theorem}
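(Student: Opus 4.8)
The plan is to transcribe the proof of the growth lemmas for finite-horizon periodic dispersing billiards from \cite[\S 5.10]{CM:book}, using the leafwise structure set up in \S\ref{sec:womanifolds}. The point that makes the transcription legitimate is that every ingredient of the periodic argument is local in a leaf: a weakly homogeneous unstable curve $W$ of finite length lies in a single path-connected component $\partial S_i\times\{c\}\times[-\pi/2,\pi/2]$ of $\mathcal{D}$, and on such a component the restriction of $f$ is a piecewise-smooth map whose geometry --- number of smooth branches, curvature bounds, free-path times, $C^3$-norms of the relevant boundary arcs --- is controlled by constants depending only on $\mathcal{K}_{\min},\mathcal{K}_{\max},\tau_{\min},M$ and the prototile data, all uniform by finite local complexity and finite horizon. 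In particular $\Xi_1$ meets each component in a uniformly bounded number of smooth curves, so after adjoining the homogeneity-strip boundaries $\mathbb{S}$ the combinatorics of cutting $W$ into $H$-components under $f$ is the same as for a genuine periodic billiard; and $W$ together with all of its forward $H$-components stays inside single components as we iterate (a collision moves us from one component to another, but never off the foliation). Thus the constants $\hat\Lambda,\theta_1,c_1,c_2,c_3,\zeta$ can be taken uniform over all components and all curves.

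Concretely I would proceed in three steps. First, following \cite[\S 5.10]{CM:book}, I would establish the one-step expansion estimate: there are $\delta_0>0$ and $\vartheta_0<1$ so that any weakly homogeneous unstable curve $W$ with $|W|<\delta_0$ satisfies $\sum_j (\Lambda_j^W)^{-1}\le\vartheta_0$, where $f(W)=\bigsqcup_j V_j$ is the decomposition into $H$-components, $W_j=f^{-1}(V_j)\cap W$, and $\Lambda_j^W$ is the minimal leafwise expansion of $f$ on $W_j$; the $H$-components away from the grazing region are handled by the uniform bound $\Lambda=1+2\tau_{\min}\mathcal{K}_{\min}>1$ of \S\ref{sec:womanifolds} together with the uniform bound on the number of branches, and the infinitely many $H$-components inside the strips $\mathbb{H}_k$, $|k|\ge k_0$, by the $\cos^{3/2}$ length bound \eqref{eqn:weakHom}, exactly as in the periodic case. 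Iterating the estimate $n_0$ times for a fixed large $n_0$ gives $\vartheta_0^{n_0}\hat\Lambda^{n_0}<1$, where $\hat\Lambda$ is the minimal $n_0$-step leafwise expansion. Second, I would combine this with the density distortion bounds (the Proposition giving $|\partial_x^l\ln\rho_W|\le C_l|W|^{-2l/3}$) and the linear tail bound \eqref{eqn:linearTail} to run the recursion of \cite[\S 5.10]{CM:book}: decomposing $f^n(W)$ into $H$-components and splitting $m_W(r_n(x)<\varepsilon)$ into the contribution of $H$-components that were already short at time $0$ --- which, because the boundary influence contracts under $f^{-n}$ by a factor $\hat\Lambda^n$, produces the term $c_1(\theta_1\hat\Lambda)^n m_W(r_0(x)<\varepsilon/\hat\Lambda^n)$ --- and the contribution of components that became short in transit, which the one-step estimate and the distortion bounds control by $c_2\varepsilon\, m_W(W)$; this is pure bookkeeping with no new input. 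Third, \eqref{eqn:secondGrowth} follows from \eqref{eqn:firstGrowth} by the standard estimate: using $m_W(r_0(x)<\delta)\le\min(2\delta,m_W(W))$ and choosing $\zeta$ large in terms of $\theta_1$, the first term of \eqref{eqn:firstGrowth} is at most $\tfrac12 c_3\varepsilon\, m_W(W)$ once $n\ge\zeta|\ln|W||$, with curves of length $\ge\delta_0$ first partitioned into pieces shorter than $\delta_0$.

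The only step that needs genuine checking, rather than a citation, is the claim underpinning the first step: that $f$ restricted to a single path-connected component of $\mathcal{D}$ has uniformly bounded complexity and uniformly controlled geometry, so that the one-step expansion constant $\vartheta_0$ is uniform over all components. This is exactly where finite local complexity (finitely many scatterer types and finitely many local configurations up to any given radius, hence uniform curvature and $C^3$ bounds) and finite horizon (the uniform free-path bound $M$, hence only boundedly many scatterers relevant to each collision) are used. Once this uniformity is in hand, the remainder of the proof is the argument of \cite[\S 5.10]{CM:book} with ``$C^m$ curve in $\partial S_i\times[-\pi/2,\pi/2]$'' replaced by ``leafwise-$C^m$ curve in a path-connected component of $\mathcal{D}$'' and ``Lebesgue measure'' replaced by leafwise Lebesgue measure.
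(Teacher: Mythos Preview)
Your proposal is correct and matches the paper's approach: the paper simply states that the proofs follow from the periodic case \cite[\S 5.10]{CM:book}, relying on the leafwise uniformity discussed at the opening of \S\ref{sec:womanifolds}. Your write-up is a faithful elaboration of that citation, making explicit the one-step expansion estimate, the recursion, and the uniformity coming from finite local complexity and finite horizon --- exactly the justification the paper leaves implicit.
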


Finally, we state the analog of Sinai's fundamental theorem for dispersing billiards \cite[Theorem 5.70]{CM:book}.

\begin{theorem}
  \label{thm:fundamental}
  Let $x\in \mathcal{D}\backslash \Xi_\infty^{\mathbb{H}}$. For any $q>0$ and $A>0$ there exists an leafwise open neighborhood $\mathcal{U}^u_x\subset \mathcal{D}$ of $x$ such that for any unstable curve $W\subset \mathcal{U}_x^u$,
  $$m_W(y\in W: r^s(y)>A|W|) \geq (1-q)m_W(W).$$
  Additionally, for $x\in \mathcal{D}\backslash \Xi_{-\infty}^{\mathbb{H}}$, any $q>0$ and $A>0$ there exists a leafwise open neighborhood $\mathcal{U}^s_x\subset \mathcal{D}$ of $x$ such that for any stable curve $W\subset \mathcal{U}_x^s$,
  $$m_W(y\in W: r^u(y)>A|W|) \geq (1-q)m_W(W).$$
\end{theorem}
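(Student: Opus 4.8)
The plan is to run the proof of the periodic fundamental theorem (\cite[Theorem 5.70]{CM:book}) leafwise, and to check that the only new ingredient — the transverse Cantor direction — does not interfere, using Lemma \ref{lem:partial} and the transversal local constancy of the singularity curves. I would only treat the first assertion (about unstable curves $W$ and stable return times $r^s$); the second follows by applying the first to $f^{-1}$, since all the structures in \S\ref{sec:womanifolds} are symmetric in $f \leftrightarrow f^{-1}$.

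First I would fix $x \in \mathcal{D}\setminus\Xi_\infty^{\mathbb H}$ and recall from the discussion after \eqref{eqn:disc} that each $\Xi_n$ (and each $f^{-m}(\mathbb S)$, hence each $\Xi_n^{\mathbb H}$) is a \emph{cantorized union of transversally locally constant curves}: if a singularity curve passes through $(r,c,\phi)$ then the translation-equivalent curve passes through $(r,c',\phi)$ for all $c'$ in a transverse clopen neighborhood of $c$. Consequently, on a small leafwise open neighborhood of $x$ the geometry of the singularity set — which curves are present, their tangent directions, their mutual distances — is \emph{exactly} that of a periodic dispersing billiard with finitely many scatterer types; the leafwise restriction of $f$ to such a neighborhood is precisely a germ of a periodic Sinai billiard map (this is the content of the remark after \eqref{eqn:coneInv} and the leafwise hyperbolicity/distortion/growth estimates already collected above). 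The strategy is therefore: choose the neighborhood $\mathcal{U}_x^u$ so small that (a) it meets only finitely many singularity curves of $\Xi_n^{\mathbb H}$ for the relevant range of $n$, and (b) by Lemma \ref{lem:partial}, iterates of this neighborhood stay in a controlled transverse tube, so that the transverse coordinate is frozen long enough for the leafwise analysis to run.

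The core of the argument is then the classical one: given $q,A>0$, pick $n_0$ with $\hat\Lambda^{-n_0} < $ (small constant depending on $q,A$, via the growth lemmas \eqref{eqn:firstGrowth}–\eqref{eqn:secondGrowth} and the linear tail bound \eqref{eqn:linearTail}). Shrink $\mathcal{U}_x^u$ so that for every unstable curve $W\subset\mathcal{U}_x^u$ the first $n_0$ iterates $f^j(W)$, $0\le j\le n_0$, are weakly homogeneous, cross no singularity curve of $\Xi_{n_0}^{\mathbb H}$ in their interior (possible because $x\notin\Xi_\infty^{\mathbb H}$, so $\Xi_{n_0}^{\mathbb H}$ stays a definite leafwise distance from $x$, uniformly in the transverse direction by transversal local constancy), and so that by Lemma \ref{lem:partial} all of this happens inside one transverse tube $B^\mho_{2\varepsilon}$ where $f$ is leafwise a periodic billiard map. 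On such $W$, for a point $y$ to have $r^s(y) \le A|W|$ it must be that $f^j(y)$ falls within leafwise distance $\sim A|W|\hat\Lambda^{-?}$ of the boundary of its H-component or of $\Xi^-$; using \eqref{eqn:firstGrowth}, \eqref{eqn:secondGrowth}, \eqref{eqn:linearTail} and the uniform distortion bounds, the $m_W$-measure of such $y$ is $\le c\,(A|W|)^{\text{(power)}} + c\,\theta_1^{n_0}$, which is $\le q\,m_W(W)$ once $n_0$ is large and $\mathcal{U}_x^u$ small; after $n_0$ steps the second growth lemma takes over and controls all further returns. This is verbatim \cite[\S5.11]{CM:book} run on a single leaf.

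The main obstacle — and the only genuinely new point — is bookkeeping the interaction between the leafwise dynamics and the Cantor transversal: one must be sure that ``shrink the leafwise neighborhood'' and ``freeze the transverse coordinate'' are compatible, i.e. that the number of iterates $n_0$ demanded by the $(q,A)$-estimate does not outrun the number of iterates $\lfloor\tau/\varepsilon\rfloor/2$ for which Lemma \ref{lem:partial} keeps us in a single transverse tube. This is handled by the order of quantifiers: $q$ and $A$ fix $n_0$ first; then Lemma \ref{lem:partial} gives a tube radius $\varepsilon$ with $\lfloor\tau/\varepsilon\rfloor/2 > n_0$; then, since $x\notin\Xi_{n_0}^{\mathbb H}$ and $\Xi_{n_0}^{\mathbb H}$ is transversally locally constant, there is a leafwise open neighborhood $\mathcal{U}_x^u\subset B^\mho_\varepsilon(x)$ disjoint from $\Xi_{n_0}^{\mathbb H}$, and the whole finite-horizon leafwise computation proceeds exactly as in the periodic case with constants depending only on the finitely many model scatterers. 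Everything after this reduction is identical to \cite[Theorem 5.70]{CM:book}, so I would simply cite it.
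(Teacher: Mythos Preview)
Your proposal is correct in spirit and matches the paper's approach: the paper gives no proof for Theorem~\ref{thm:fundamental} at all, simply stating it with a reference to \cite[Theorem~5.70]{CM:book} and relying on the blanket justification at the start of \S\ref{sec:womanifolds} that all leafwise estimates (cones, uniform hyperbolicity, distortion, growth lemmas \eqref{eqn:firstGrowth}--\eqref{eqn:secondGrowth}, the tail bound \eqref{eqn:linearTail}) hold with constants depending only on the finitely many model scatterers and the finite horizon.

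The one point worth flagging is that your appeal to Lemma~\ref{lem:partial} and the transverse tube $B^\mho_\varepsilon$ is unnecessary and slightly miscast. The neighborhood $\mathcal{U}_x^u$ in the statement is \emph{leafwise open}, hence lies in a single connected component of $\mathcal{D}$; and since $f$ sends collisions on a tiling $\mathcal{T}'$ to collisions on the same $\mathcal{T}'$, every iterate $f^j(W)$ of an unstable curve $W\subset\mathcal{U}_x^u$ already lies on the fixed leaf through $x$. There is no transverse coordinate to freeze, and in particular the inclusion ``$\mathcal{U}_x^u\subset B^\mho_\varepsilon(x)$'' is a type mismatch (a leafwise open set and a transverse ball meet only at $x$). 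Once one notes that the growth lemmas and the local structure of $\Xi_{n_0}^{\mathbb H}$ hold on that single leaf with the uniform constants already recorded, the argument of \cite[Theorem~5.70]{CM:book} runs verbatim there with no further bookkeeping; your ``main obstacle'' is therefore not an obstacle.
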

\section{Ergodic properties}
\label{sec:K}
The goal of this section is to prove the following.
\begin{theorem}
  \label{thm:Kproperty}
Let $\mathcal{T}$ be a repetitive, aperiodic tiling of $\mathbb{R}^2$ of finite local complexity, $\mathcal{S}$ be a $\mathcal{T}$-equivariant collection of 
dispersive scatterers with $C^3$ boundary defining a Lorentz gas of
category A, and $\mu$ an $\mathbb{R}^2$-invariant ergodic probability measure on $\Omega_\mathcal{T}$. 
Then $(f, \mathcal{D},\bar{\mu})$ has the K property.
\end{theorem}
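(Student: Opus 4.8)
The plan is to adapt the classical scheme for proving the K property of planar dispersing billiards (via local ergodicity / the Hopf argument and absolute continuity) to the compact model $f:\mathcal{D}\to\mathcal{D}$, and then to use the minimality of the $\mathbb{R}^2$-action on $\Omega$ together with the ergodicity of $\mu$ to propagate local statements across the totally disconnected transverse direction. Concretely, I would argue by showing that the Pinsker partition $\pi(f)$ of $(f,\mathcal{D},\bar\mu)$ is trivial. Since $f$ is leafwise hyperbolic (from the strictly invariant cones and uniform expansion/contraction recorded before the statement) and the conditional measures on unstable manifolds are absolutely continuous, the unstable partition $\xi^u$ subordinate to $f$ and its tail $\xi^u_\infty=\bigwedge_{n\ge0}f^n\xi^u$ is, as in \cite[\S6]{CM:book}, a candidate for the Pinsker partition; it suffices to show $\xi^u_\infty$ is trivial mod $\bar\mu$.

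First I would establish \emph{leafwise local ergodicity}: for $\bar\mu$-a.e.\ $x\in\mathcal{D}$, Lebesgue-a.e.\ two points lying on the same connected component $\partial S_i\times S^{d-1}_+$ (and on the same transverse cylinder, i.e.\ corresponding to the same scatterer configuration) can be joined by a finite stable–unstable ``zig-zag'' path. This is exactly the Sinai–Chernov local ergodicity argument for planar dispersing billiards: the inputs are the Fundamental Theorem (Theorem~\ref{thm:fundamental}), the growth lemmas \eqref{eqn:firstGrowth}–\eqref{eqn:secondGrowth}, the linear tail bound \eqref{eqn:linearTail}, and the absolute continuity / distortion control of the stable holonomy — all of which hold leafwise in our setting by the results collected in Section~\ref{sec:womanifolds}, whose proofs are identical to the periodic case thanks to finite local complexity (finitely many scatterer types $\Rightarrow$ uniform curvature bounds, finitely many local configurations $\Rightarrow$ uniform lower bound on free flight). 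Consequently any $f$-invariant (mod $\bar\mu$) set, and in particular any element of $\xi^u_\infty$, must contain or omit, up to leafwise-Lebesgue-null sets, an entire ``connected component'' of $\mathcal{D}$ — that is, the single scatterer $\partial S_i$ together with all its incoming/outgoing directions, paired with a fixed transverse coordinate $c\in\mathcal{C}_i$.

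Next I would promote this from one connected component to a dense family of them. Given a positive-measure element $P$ of the Pinsker partition, by the previous step $P$ fully contains (mod null) some connected component $\mathcal{L}_{x_0}=\partial S_i\times S^{d-1}_+\times\{c_0\}$. Now apply the dynamics: the billiard map $f$, restricted to a suitable canonical cylinder, has the transverse-locally-constant structure described in Section~\ref{subsec:model} (the image of a transverse cylinder set is a finite union of transverse cylinder sets), so $f$-invariance of $P$ forces $P$ to contain, mod null, all connected components reachable by $f^{\pm n}$ from $\mathcal{L}_{x_0}$; since $\Xi^{\pm}$ is dense in $\mathcal{D}$ and the forward/backward orbit of a component spreads through transverse cylinders, one gets that $P$ contains a \emph{dense} union of connected components. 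Finally, to conclude triviality I would use the minimality of the $\mathbb{R}^2$-action on $\Omega$ and the ergodicity of $\mu$: the transverse measure $\nu$ is the patch-frequency measure, and holonomy invariance of $\nu$ combined with the fact that $P$ is (mod null) a union of whole leaves $\partial S_i\times S^{d-1}_+$ over a measurable, holonomy-saturated, $\mathbb{R}^2$-flow-saturated subset of $\Omega$ means the transverse ``trace'' of $P$ is an invariant set for an ergodic system, hence has $\nu$-measure $0$ or $1$; therefore $\bar\mu(P)\in\{0,1\}$ and $\xi^u_\infty$ is trivial. That $\xi^u_\infty$ trivial implies the K property is then the standard Rokhlin–Sinai criterion (a hyperbolic system with absolutely continuous unstable foliation whose Pinsker factor is trivial is K).

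The main obstacle I anticipate is the middle step — rigorously controlling how a Pinsker element, known to be leafwise saturated, behaves under the transverse ``branching'' of $f$ and then identifying its transverse trace as a genuinely $\mathbb{R}^2$-invariant (flow-saturated, holonomy-saturated) subset of $\Omega$ to which ergodicity of $\mu$ applies. The subtlety is that $\mathcal{D}$ is not a manifold: stable/unstable ``zig-zag'' chains only connect points on the \emph{same} leaf, so leafwise local ergodicity by itself says nothing across different $c\in\mathcal{C}_i$; the transverse spreading must come entirely from the dynamics plus minimality, and one must check that the resulting subset of $\Omega$ is measurable and simultaneously saturated by the flow and by stable holonomies, so that ergodicity of $\mu=\mathrm{Leb}\times\nu$ (equivalently ergodicity of $\nu$ under the return/holonomy pseudogroup) can be invoked. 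The bookkeeping of ``mod leafwise-Lebesgue-null'' versus ``mod $\bar\mu$-null'' across uncountably many leaves, using the absolute continuity of the holonomy (Theorem on $J\mathbf h$) to pass between them, is where the argument genuinely departs from \cite{CM:book} and will require the most care.
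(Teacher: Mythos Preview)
Your overall architecture matches the paper's: show Pinsker atoms are unions of connected components via leafwise local ergodicity (this is the paper's Proposition~\ref{prop:step1}, proved exactly as you outline from Theorem~\ref{thm:fundamental} and absolute continuity), then pass to full leaves $\mathbb{L}_x$, then use the local product structure of $\bar\mu$ and the ergodicity of $\mu$ to conclude that the transverse trace of an atom has $\nu$-measure $0$ or $1$.

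There is, however, a genuine gap in your middle step. You write ``$f$-invariance of $P$ forces $P$ to contain, mod null, all connected components reachable by $f^{\pm n}$ from $\mathcal{L}_{x_0}$.'' But a Pinsker \emph{atom} $P$ is not a priori $f$-invariant. The Pinsker $\sigma$-algebra satisfies $f^{-1}\Pi_f=\Pi_f$, which only tells you that $f$ \emph{permutes} the atoms; it does not tell you $f(P)=P$. What you actually get from Step~1 plus $f(\Pi_f(\mathcal{L}))=\Pi_f(f(\mathcal{L}))$ is that the atom $\Pi_f(f(\mathcal{L}))$ contains every component that $f(\mathcal{L})$ meets in positive measure --- but that atom might be different from $\Pi_f(\mathcal{L})$. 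Without closing this loop you cannot conclude that $\mathbb{L}_x\subset\Pi_f(x)$, and hence you cannot identify the transverse trace of an atom with an $\mathbb{R}^2$-invariant subset of $\Omega$; the final ergodicity argument then has nothing to bite on.

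The paper handles this with an additional combinatorial step (Proposition~\ref{prop:invariance}), whose proof uses a ``three-scatterer'' lemma (Lemma~\ref{lem:3piece}, from \cite[Lemma 6.23]{CM:book}): for any component $\mathcal{L}$ there exist components $\mathcal{L}_1,\mathcal{L}_2$ with $f(\mathcal{L})\cap\mathcal{L}_1$, $f(\mathcal{L})\cap\mathcal{L}_2$, $f(\mathcal{L}_1)\cap\mathcal{L}_2$, $f(\mathcal{L}_1)\cap\mathcal{L}$ all of positive Lebesgue measure. Chasing these intersections through Step~1 and the permutation $f(\Pi_f(\cdot))=\Pi_f(f(\cdot))$ forces $\Pi_f(\mathcal{L})=\Pi_f(\mathcal{L}_1)=\Pi_f(\mathcal{L}_2)=\Pi_f(f(\mathcal{L}))$, i.e.\ atoms are genuinely $f$-invariant. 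Only then does Corollary~\ref{cor:density} give $\mathbb{L}_x\subset\Pi_f(x)$, and the ergodicity-of-$\mu$ argument goes through.

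A secondary clarification: your anticipated obstacle about ``transverse branching of $f$'' is not where the difficulty lies. The map $f$ does not branch in the Cantor direction at all --- it stays on a single leaf of $\Omega$ (a single tiling). The image $f(\mathcal{L})$ meets several components, but all of them have the same transverse coordinate (same $\mathbb{R}^2$-orbit in $\Omega$). So there is no transverse spreading from the dynamics; the passage across the Cantor direction comes \emph{only} at the very end, from the ergodicity of $\mu$. The delicate point is the one above: making sure the Pinsker atom swallows the whole leaf before you invoke that ergodicity.
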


Let us summarize the strategy of the proof in this section. Our goal will be to show that the Pinsker partition is trivial, which is one characterization of a K-system (recall that the Pinsker partition is the join of all partitions of zero entropy). 
In order to do this, we will first show that $\mu$-almost every connected component is fully contained (up to a subset of Lebesgue measure zero) in a single element of the Pinsker partition. Using the (minimal) foliated structure of the tiling space and the dynamics of $f$, it will follow that a dense union of connected components is contained in an element of the Pinsker partition. Finally, using again the structure of the tiling space and the ergodicity of $\mu$ we will show that the Pinsker partition is trivial.
We denote the Pinsker partition of $f$ by $\Pi_f$ and by $\Pi_f(x)$ the element of the Pinsker partition containing $x$.

\begin{proposition}
  \label{prop:step1}
$\mu$-almost every connected component $\mathcal{L}\subset \mathcal{D}$ is contained (mod 0) with respect to the Lebesgue measure in one element of the Pinsker partition.
\end{proposition}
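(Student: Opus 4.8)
The plan is to adapt the standard local-ergodicity (Sinai--Chernov) argument for planar dispersing billiards to our leafwise setting, exploiting the fact that the map $f$ restricted to a single connected component $\mathcal{L}_x\cong \partial S_i\times S^1_+$ behaves exactly like a Sinai billiard on one scatterer together with its incoming/outgoing collisions. First I would recall that the Pinsker partition $\Pi_f$ is measurable and $f$-invariant, so it suffices to show that Lebesgue-almost every pair of points in a fixed connected component $\mathcal{L}$ lies in the same atom of $\Pi_f$. Since $\Pi_f$ is the largest zero-entropy factor, any partition refined by $\Pi_f$ has the property that $\Pi_f$ is a factor, hence it is enough to produce, for $\bar\mu$-a.e.\ pair $x,y$ in $\mathcal{L}$, a chain of local stable and unstable manifolds connecting $x$ to $y$ (a ``stable--unstable path''), because such paths lie in single atoms of any partition that is both $f$-invariant and $f^{-1}$-invariant mod $0$ and subordinate to the stable/unstable foliations --- and the Pinsker partition has exactly this property by the Hopf-type argument for hyperbolic systems (via the fact that $\xi^s$ and $\xi^u$ are one-sided generators for zero-entropy, as used in the periodic case).

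The construction of the connecting paths is where the Fundamental Theorem (Theorem~\ref{thm:fundamental}) and the absolute continuity of the holonomy map do the work, and here the argument is essentially verbatim from \cite[Chapter 6]{CM:book}. Concretely: fix a connected component $\mathcal{L}$ and a density point $x\in\mathcal{L}\setminus\Xi$. Using the linear tail bound \eqref{eqn:linearTail}, the two growth lemmas \eqref{eqn:firstGrowth}--\eqref{eqn:secondGrowth}, and Theorem~\ref{thm:fundamental}, one shows that the set of points in a small leafwise neighborhood $\mathcal{U}\subset\mathcal{L}$ of $x$ that can be joined to $x$ by a finite stable--unstable chain has full Lebesgue measure in $\mathcal{U}$; this is the local ergodicity statement, and it is purely leafwise, so no new ideas beyond \S\ref{sec:womanifolds} are needed --- all the hyperbolicity, distortion, holonomy and growth estimates there were stated leafwise precisely for this purpose. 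A standard covering/connectedness argument (the ``chain of neighborhoods'' argument) then upgrades this to: Lebesgue-a.e.\ two points of $\mathcal{L}$ are joined by a stable--unstable path, using that $\mathcal{L}$ is a connected $2$-manifold (a product of a circle-arc and an interval) and that the set where local ergodicity fails has measure zero.

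Finally I would assemble these pieces: since $\Pi_f$ is measurable, $f$- and $f^{-1}$-invariant mod $0$, and the stable and unstable partitions $\xi^s,\xi^u$ are, mod $0$, refinements of $\Pi_f$ (because $h(f,\xi^{s/u})=0$; this is the standard fact that the one-sided generated $\sigma$-algebras $\bigvee_{n\ge 0}f^n\xi^s$ etc.\ are contained in the Pinsker $\sigma$-algebra for a hyperbolic map with these partitions as generators, exactly as in the periodic case), each atom $\Pi_f(z)$ contains, mod $0$, the local stable and local unstable manifold of $z$. Therefore a stable--unstable path between two points forces them into the same Pinsker atom, and combining with the previous paragraph gives that $\mathcal{L}$ is contained mod $0$ in a single atom of $\Pi_f$, for $\mu$-a.e.\ $\mathcal{L}$ (the null set of bad components being controlled by the a.e.\ existence of stable/unstable manifolds on $\mathcal{D}$). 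The main obstacle --- or rather the one point requiring care rather than a new idea --- is verifying that the leafwise versions of the growth lemmas and the Fundamental Theorem assemble into local ergodicity \emph{within one leaf} without the transverse Cantor direction interfering; this is handled by Lemma~\ref{lem:partial}, which guarantees that short-time dynamics preserves the transverse cylinder structure, so that all the billiard estimates can be carried out on a fixed leaf exactly as in \cite{CM:book}.
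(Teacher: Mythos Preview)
Your proposal is correct and follows essentially the same route as the paper: local ergodicity via the Fundamental Theorem (packaged in the paper as Theorem~\ref{thm:local}), a connectedness/covering argument along a path in $\mathcal{L}$ (the paper uses Lemma~\ref{lem:shash} to route the path off the countable set of multiple-singularity points), and the containment $\xi^{s/u}(\omega)\subset\Pi_f(\omega)$ from \eqref{eqn:PesinProperties}. The only superfluous ingredient is your closing appeal to Lemma~\ref{lem:partial}: the leafwise billiard estimates of \S\ref{sec:womanifolds} already live entirely on a single leaf and need no control of the transverse Cantor direction, so that lemma plays no role here.
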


Before we proceed with the proof of Proposition \ref{prop:step1}, let us
recall the partition $\xi^u$ of $\mathcal{D}$ into homogeneous unstable manifolds. This partition satisfies \cite[\S 5.1]{CM:book}:
\begin{equation}
  \label{eqn:PesinProperties}
  f^{-1} \xi^u \geq \xi^u,\hspace{.75in} \bigvee_{i\in\mathbb{Z}} f^i \xi^u = \varepsilon,\hspace{.75in}\mbox{ and consequently,}\hspace{.75in}  \bigwedge_{i \in\mathbb{Z}} f^i \xi^u \geq \Pi_f.
\end{equation}
Replacing the map $f^{-1}$ with $f$, an analogous statement holds for the stable partition $\xi^s$.

The following is an important consequence of Theorem \ref{thm:fundamental}
; the proof follows from the periodic case \cite[\S 6.5]{CM:book}.
\begin{theorem}
  \label{thm:local}
  For any point $x\in \mathcal{D}\backslash (\Xi_{\infty}^\mathbb{H}\cup \Xi_{-\infty}^\mathbb{H} )$ there is a leafwise open neighborhood $\mathcal{U}_x\subset \mathcal{D}$ with the property that Lebesgue almost every pair of points in  $\mathcal{U}_x$ can be connected by a sequence of
homogenous stable and unstable manifolds contained in $\mathcal{U}_x$.  Moreover, if $x\in\mathcal{D}$ lies on exactly one smooth singularity curve $S\subset \Xi_{\infty}^\mathbb{H}\cup \Xi_{-\infty}^\mathbb{H}$ then there is a leafwise open neighborhood $\mathcal{U}_x\subset \mathcal{D}$ with the same property.
\end{theorem}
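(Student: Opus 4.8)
The plan is to mimic the local-ergodicity argument of \cite[\S 6.5]{CM:book} leafwise, paying attention only to those places where the transverse (Cantor) direction could interfere. Fix a point $x\in\mathcal{D}\backslash(\Xi_\infty^{\mathbb{H}}\cup\Xi_{-\infty}^{\mathbb{H}})$. By the leafwise Fundamental Theorem (Theorem \ref{thm:fundamental}), given $q,A$ there are leafwise open neighborhoods $\mathcal{U}_x^u$ and $\mathcal{U}_x^s$ of $x$ in which unstable (resp.\ stable) curves have long transverse stable (resp.\ unstable) manifolds on a $(1-q)$-fraction of their length. First I would choose a leafwise open neighborhood $\mathcal{U}_x\subset \mathcal{U}_x^u\cap\mathcal{U}_x^s$ which is a canonical cylinder set of the form $C_{I,\mathcal{P},D}$ (so it is isometric to a genuine piece of a planar dispersing-billiard collision space, with the transverse coordinate fixed), small enough that the linear-tail bound \eqref{eqn:linearTail} and the growth lemmas \eqref{eqn:firstGrowth}--\eqref{eqn:secondGrowth} apply with good constants. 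On such a $\mathcal{U}_x$ the stable/unstable partitions $\xi^{s},\xi^{u}$, the conditional densities $\rho_W$ with their distortion bounds, the absolute continuity of the holonomy map $\mathbf{h}$ with $C^{-1}\le j\mathbf{h}\le C$, and the dynamical H\"older continuity of $\ln J\mathbf{h}$ are all available verbatim from \S\ref{sec:womanifolds}.

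The core of the argument is then the standard Sinai--Chernov--Sima\'nyi construction of the \emph{``chain of stable--unstable manifolds''} connecting a.e.\ pair of points: pick a reference unstable manifold $W_0\subset\mathcal{U}_x$ of positive length through a generic point, and show that the set of points $y\in\mathcal{U}_x$ that can be joined to $W_0$ by a bounded-length alternating chain of homogeneous stable and unstable manifolds lying in $\mathcal{U}_x$ has full Lebesgue measure. The key quantitative inputs are: (i) the absolute continuity of the stable/unstable partitions, which lets one transfer "full measure on a.e.\ leaf" between transverse families; (ii) the growth lemmas, which guarantee that images of short unstable curves quickly acquire definite size away from singularities; and (iii) the Fundamental Theorem, which guarantees that these curves have long transverse connectors. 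Since $\mathcal{U}_x$ is, as a metric space together with $f$ restricted to it for the relevant finite number of iterates, isometric to a chunk of an ordinary planar Sinai billiard collision space (this is exactly the point of Lemma \ref{lem:partial}: for $\varepsilon$ small the first $\sim\varepsilon^{-1}$ collisions of everything in $B^{\mho}_\varepsilon(x)$ agree, so $f$ there only sees the fixed local patch), the Chernov--Markarian proof of local ergodicity goes through without change on each leaf, and the transverse direction is simply a spectator. This gives the first assertion.

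For the second assertion — $x$ lying on exactly one smooth singularity curve $S$ — I would follow \cite[\S 6.5]{CM:book} and split a leafwise neighborhood of $x$ into the two one-sided pieces $\mathcal{U}_x^\pm$ on either side of $S$; the Fundamental Theorem applied separately on each side produces long stable/unstable connectors within $\mathcal{U}_x^\pm$, and because $S$ is a single $C^1$ curve (in the leaf) one can always find unstable curves crossing from one side to the other transversally to $S$ whose two halves are non-degenerate. One then argues that a.e.\ point on each side connects to such a crossing curve, and hence the two sides are linked into a single connected family, again exactly as in the periodic case. The main obstacle I anticipate is bookkeeping rather than mathematical: making sure that "leafwise open neighborhood" can consistently be taken to be a canonical cylinder set on which $f$ and $f^{-1}$ genuinely look like a single planar billiard map for all the finitely many iterates used in the connecting-chain construction, and that the ``dense union of singularity curves'' $\Xi^{\pm}$ (which is dense in $\mathcal{D}$ but in each leaf is only a countable union of smooth curves) does not obstruct the existence of the chains — this is handled, as in \cite{CM:book}, by the linear-tail estimate \eqref{eqn:linearTail} together with the growth lemma \eqref{eqn:secondGrowth}, which together force the "bad" set (points whose stable or unstable manifold is too short, or whose forward/backward orbit hits $\Xi^{\mathbb{H}}_{\pm\infty}$ too soon) to have small measure.
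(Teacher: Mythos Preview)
Your proposal is correct and follows exactly the route the paper takes: the paper simply asserts that the proof ``follows from the periodic case \cite[\S 6.5]{CM:book}'' via Theorem \ref{thm:fundamental}, and your write-up is a faithful (and more detailed) unpacking of that leafwise transplantation of the Sinai--Chernov local-ergodicity argument.
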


We also need the following observation, the proof of the which is the same as in the periodic case \cite[Lemma 6.18]{CM:book}.
  \begin{lemma}
    \label{lem:shash}
    The set of points which belong on more than one smooth curve of the singularity set $\Xi_\infty^\mathbb{H}\cup \Xi_{-\infty}^\mathbb{H}$ is leafwise countable, that is, for every connected component $\mathcal{L}\subset \mathcal{D}$, the subset $\mathcal{B}_\mathcal{L}\subset \mathcal{L}$ consisting of points which belong to more than one singularity curve is countable.
  \end{lemma}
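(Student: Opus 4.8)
The plan is to reduce the statement about the collision space $\mathcal{D}$ to the analogous fact for periodic dispersing billiards, exploiting the transversally-locally-constant (Cantorized) structure of the singularity set established earlier in this section. Recall from the discussion around \eqref{eqn:cantorized} and \eqref{eqn:disc} that for each $n$ the set $\Xi_{\pm n}^\pm$ is homeomorphic to a cantorized union $\bigsqcup_{i,j}\gamma_{i,j}\times\mathcal{C}_{i,j}^n$ of finitely many smooth leafwise curves, and similarly (after adjoining the preimages of the homogeneity boundaries $\mathbb{S}$) for $\Xi_{\pm n}^{\mathbb{H}}$. A point $x$ lying on more than one singularity curve of $\Xi_\infty^{\mathbb{H}}\cup\Xi_{-\infty}^{\mathbb{H}}$ therefore lies, within its leaf, on a transverse crossing of two such curves, i.e.\ $x$ is a point where $\gamma_{i,j}\times\mathcal{C}_{i,j}^n$ and $\gamma_{i',j'}\times\mathcal{C}_{i',j'}^{n'}$ meet.

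First I would fix a connected component $\mathcal{L}\subset\mathcal{D}$; by the local product description \eqref{eqn:coordinates}, $\mathcal{L}$ is isometric to $\partial S_i\times S^1_+$ (a fixed model scatterer cross the directions) with a fixed transverse coordinate $c\in\mathcal{C}_i$. Then I would observe that, for each $n$, $\mathcal{B}_{\mathcal{L}}\cap(\Xi_n^{\mathbb{H}}\cup\Xi_{-n}^{\mathbb{H}})$ is exactly the set of points in $\mathcal{L}$ lying on at least two of the finitely many leafwise-smooth curves $\gamma_{i,j}$ obtained by slicing the cantorized families at the transverse coordinate $c$. Each such curve is a compact $C^1$ (indeed $C^{\ell-1}$) curve on which $f^{\pm m}$ is smooth, $m\le n$, and any two distinct smooth compact curves of this type intersect in a finite set unless they coincide on an arc; but an arc of coincidence would force the two curves to be equal as maximal smooth singularity curves, contradicting distinctness. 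Hence $\mathcal{B}_{\mathcal{L}}\cap(\Xi_n^{\mathbb{H}}\cup\Xi_{-n}^{\mathbb{H}})$ is finite for each $n$. Taking the union over $n\in\mathbb{N}$ gives that $\mathcal{B}_{\mathcal{L}}=\bigcup_n\big(\mathcal{B}_{\mathcal{L}}\cap(\Xi_n^{\mathbb{H}}\cup\Xi_{-n}^{\mathbb{H}})\big)$ is countable, which is the claim.

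The only genuinely non-routine point — and the one where I would simply invoke the periodic case, Lemma 6.18 of \cite{CM:book}, rather than redo it — is the transversality/finite-intersection statement for pairs of singularity curves: that two distinct smooth singularity curves cannot be tangent along an arc, and indeed that their mutual intersection within a leaf is finite. In the periodic setting this is a consequence of the alignment properties of singularity curves with the stable/unstable cone field (curves in $\Xi^+$ are leafwise unstable, those in $\Xi^-$ leafwise stable, by the statement after \eqref{eqn:disc}), so that a stable and an unstable curve are uniformly transverse, while two stable (resp.\ two unstable) curves that agreed on an arc would be identified by the cone-contraction dynamics. Since, as emphasized at the start of Section \ref{sec:womanifolds}, the leafwise analysis of singularity curves is verbatim the periodic one, the argument transfers without change; the genuinely new ingredient here is only the bookkeeping that the ``extra'' singularity curves produced by repetitivity sit in transversally-locally-constant families, so that restricting to a single leaf $\mathcal{L}$ (fixed $c$) leaves only finitely many curves at each level $n$. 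Thus the proof is: combine the periodic finite-intersection statement leafwise with the finiteness, at each level, of the curves in the cantorized families, and conclude countability by a countable union over $n$.
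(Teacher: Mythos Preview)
Your proposal is correct and takes essentially the same approach as the paper: the paper's proof is simply the one-line remark that the argument is the same as in the periodic case \cite[Lemma~6.18]{CM:book}, and what you have written is precisely the leafwise bookkeeping (via the cantorized structure and finitely many curves at each level $n$) that makes this invocation legitimate. Your explicit countable-union-over-$n$ reduction is exactly the intended content behind the paper's citation.
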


\begin{proof}[Proof of Proposition \ref{prop:step1}]
  It follows from Lemma \ref{lem:shash} that for any $x\in \mathcal{D}$ and two points $y,z\in \mathcal{L}_x\setminus \mathcal{B}_{\mathcal{L}_x}$ there exists a path $\gamma_{y,z}\subset \mathcal{L}_x\setminus \mathcal{B}_{\mathcal{L}_x}$ from $y$ to $z$. Picking $w\in \gamma_{y,z}$ and appealing to Theorem \ref{thm:local} there is a leafwise open neighborhood $\mathcal{U}_w$ such that Leb-a.e. two points in $\mathcal{U}_w$ can be joined by a sequence of stable and unstable manifolds. 
Noting that $\gamma_{y,z}$ is a compact set, we can pick finitely many points 
$w_1,\dots, w_N$ such that the union of their corresponding neighborhoods $\mathcal{U}_{w_1},\dots, \mathcal{U}_{w_N}$ covers $\gamma_{y,z}$.
  By the last property in (\ref{eqn:PesinProperties})
and the local product structure of $\bar{\mu}$,
 it follows that for $\bar{\mu}$-almost every $x$ and  Leb-a.e. $\omega\in \mathcal{L}_x$,
  \begin{equation}
    \label{eqn:statement1}
    \xi^u(\omega)\subset \Pi_f(\omega) \hspace{1in}\mbox{ and }\hspace{1in} \xi^s(\omega)\subset \Pi_f(\omega).\;\; 
  \end{equation}
Therefore by (\ref{eqn:statement1}) it follows that $y,z$, $\gamma_{y,z}$ and in fact the entire neighborhood of $\gamma_{y,z}$ defined by the union of the neighborhoods $\mathcal{U}_{w_1},\dots, \mathcal{U}_{w_N}$ is contained up to Lebesgue measure zero in the same element of the Pinsker partition $\Pi_f(y) = \Pi_f(z)$. Since we can apply the same argument to almost any two points on a connected component, we deduce that the entire connected component is contained up to Lebesgue measure zero in one element of the Pinsker partition.
\end{proof}
\begin{proposition}
\label{prop:invariance}
  For almost every connected component $\mathcal{L}$, $\Pi_f(\mathcal{L}) = \Pi_f(f(\mathcal{L}))$.
\end{proposition}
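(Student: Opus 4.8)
The statement asserts that the Pinsker partition is (mod 0) invariant under the map $f$ when restricted to the leafwise structure: the element of $\Pi_f$ containing a typical connected component $\mathcal{L}$ is the same as the one containing $f(\mathcal{L})$. The natural approach is to combine Proposition \ref{prop:step1} with the general fact that the Pinsker partition is $f$-invariant as a measurable partition, i.e. $f(\Pi_f) = \Pi_f$ (equivalently $f^{-1}\Pi_f = \Pi_f$), which holds because the Pinsker $\sigma$-algebra is $f$-invariant by construction. First I would recall that Proposition \ref{prop:step1} gives us a full-measure set $\mathcal{G}\subset\mathcal{D}$, saturated by connected components, such that every $\mathcal{L}\subset\mathcal{G}$ lies (mod 0) inside a single atom $\Pi_f(\mathcal{L})$. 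Since $\bar\mu$ is $f$-invariant and $f$ maps connected components to connected components (it is a leafwise diffeomorphism off the singularity set, which is $\bar\mu$-null), the set $\mathcal{G}\cap f^{-1}(\mathcal{G})$ still has full measure and is a union of connected components; for $\mathcal{L}$ in this set both $\mathcal{L}$ and $f(\mathcal{L})$ lie (mod 0) in single Pinsker atoms.

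The core step is then the following. Fix such an $\mathcal{L}$ and let $x\in\mathcal{L}$ be a Lebesgue-typical point with $f(x)\in f(\mathcal{L})$ also typical in its component. On one hand $x\in\Pi_f(\mathcal{L})$ and $f(x)\in\Pi_f(f(\mathcal{L}))$ by Proposition \ref{prop:step1}. On the other hand, $f^{-1}\Pi_f=\Pi_f$ implies $f(\Pi_f(x)) = \Pi_f(f(x))$ for $\bar\mu$-a.e.\ $x$, so $\Pi_f(f(\mathcal{L}))=\Pi_f(f(x))=f(\Pi_f(x))=f(\Pi_f(\mathcal{L}))$. But $\Pi_f(\mathcal{L})$ is an atom of $\Pi_f$, hence $f(\Pi_f(\mathcal{L}))=\Pi_f(f(\mathcal{L}))$ is the \emph{same} atom of $\Pi_f$ as $\Pi_f(\mathcal{L})$ precisely because $f$ permutes the atoms of an invariant partition: since $\Pi_f$ is $f$-invariant, $f$ maps each atom onto another atom, and here that image atom contains $f(\mathcal{L})$, so $\Pi_f(f(\mathcal{L}))=f(\Pi_f(\mathcal{L}))$. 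To conclude $\Pi_f(\mathcal{L})=\Pi_f(f(\mathcal{L}))$ as \emph{elements of the partition} (not merely that one is the $f$-image of the other), I would invoke the crucial feature of the Pinsker partition: it is $f$-invariant in the strong sense that $f\Pi_f = \Pi_f$ with $f$ acting as the identity on the quotient, equivalently the Pinsker factor is trivial as a $\mathbb{Z}$-action only up to... — more carefully, the right formulation is that $\Pi_f$ is the \emph{measurable} partition such that $f\Pi_f=\Pi_f$, and for a measurable partition with this property, $\Pi_f(f(x))$ and $\Pi_f(x)$ need not coincide in general, so the argument must use that $\mathcal{L}$ and $f(\mathcal{L})$ are \emph{both} entire connected components lying in single atoms and that the union $\mathcal{L}\cup f(\mathcal{L})$ is connected-component-structured.

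Here is the cleaner route I would actually take, avoiding the subtlety above. The connected components are permuted by $f$: indeed $f$ maps $\mathcal{L}_x$ onto $\mathcal{L}_{f(x)}$ (off the null singularity set). Consider the partition $\eta$ of $\mathcal{D}$ whose atoms are unions of those connected components lying in a common Pinsker atom; by Proposition \ref{prop:step1} this $\eta$ coincides (mod 0) with the restriction of $\Pi_f$ to the relevant sub-$\sigma$-algebra, so $\eta \leq \Pi_f$ (mod 0) in the sense that $\eta$ is coarser. The claim $\Pi_f(\mathcal{L})=\Pi_f(f(\mathcal{L}))$ is exactly the statement that $f\eta = \eta$. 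Now $f\eta=\eta$ follows from two facts: (a) $f$ permutes connected components, so $f$ maps $\eta$-atoms to unions of connected components; and (b) $f\Pi_f=\Pi_f$, which forces the image of each $\eta$-atom to again be saturated with respect to "lying in a common Pinsker atom". Combining (a) and (b): if $\mathcal{L}$ and $\mathcal{L}'$ lie in the same Pinsker atom then $f(\mathcal{L})$ and $f(\mathcal{L}')$ lie in $f$ of that atom, which is again a single Pinsker atom; conversely pulling back. Hence $f$ sends $\eta$-atoms to $\eta$-atoms bijectively, i.e.\ $f\eta=\eta$ (mod 0), which is precisely Proposition \ref{prop:invariance}.

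\textbf{Main obstacle.} The only delicate point is bookkeeping the "mod 0": one must check that the $\bar\mu$-null singularity set $\Xi$ (where $f$ fails to be a leafwise diffeomorphism) and the $\bar\mu$-null exceptional set from Proposition \ref{prop:step1} do not accumulate along a positive-measure set of connected components after iterating, and that "$f$ permutes connected components" holds on a full-measure, $f$-invariant, component-saturated set. This is handled by intersecting $\mathcal{G}$ with $\bigcap_{n\in\mathbb{Z}} f^n(\mathcal{G}\setminus\Xi)$, which still has full $\bar\mu$-measure by invariance of $\bar\mu$ and countability of the intersection, and is by construction component-saturated and $f$-invariant. Everything else is a formal consequence of the $f$-invariance of the Pinsker $\sigma$-algebra.
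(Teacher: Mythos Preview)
Your argument has a genuine gap, and it is exactly the one you flag yourself but do not resolve. You correctly obtain $\Pi_f(f(\mathcal{L})) = f(\Pi_f(\mathcal{L}))$ from the $f$-invariance of the Pinsker $\sigma$-algebra, and this is also the paper's first step. The remaining task is to show that $f$ \emph{fixes} the atom $\Pi_f(\mathcal{L})$, not merely that it permutes the atoms of $\Pi_f$. Your ``cleaner route'' does not accomplish this, for two reasons.

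First, the claim that ``$f$ permutes connected components'' is false in this setting. A connected component $\mathcal{L}$ is the collision space of a \emph{single} scatterer $\Upsilon$ in a single tiling, i.e.\ $\partial\Upsilon\times[-\pi/2,\pi/2]$. Under $f$ a point on $\Upsilon$ is sent to the next scatterer it sees, and different points of $\mathcal{L}$ (different outgoing directions) land on \emph{different} neighboring scatterers. Thus $f(\mathcal{L})$ is a union of positive-measure pieces sitting inside several distinct connected components $\mathcal{L}_1,\mathcal{L}_2,\dots$; it is never a single component. Second, even if one sidesteps this by using that Pinsker atoms are unions of components (Proposition~\ref{prop:step1}), your conclusion $f\eta=\eta$ only says that $f$ permutes the $\eta$-atoms. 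This is \emph{not} the content of the proposition, which asserts that the atom containing $\mathcal{L}$ and the atom containing $f(\mathcal{L})$ coincide, i.e.\ that $f$ acts as the identity on the quotient. Nothing in your argument rules out a nontrivial permutation.

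The paper closes exactly this gap with a billiard-geometric input, Lemma~\ref{lem:3piece}: for any $\mathcal{L}$ one can find two other components $\mathcal{L}_1,\mathcal{L}_2$ such that $f(\mathcal{L})$ meets both $\mathcal{L}_1$ and $\mathcal{L}_2$ in positive measure, and $f(\mathcal{L}_1)$ meets both $\mathcal{L}$ and $\mathcal{L}_2$ in positive measure. Combining this with Proposition~\ref{prop:step1} and $f(\Pi_f(\cdot))=\Pi_f(f(\cdot))$ gives $\Pi_f(f(\mathcal{L}))=\Pi_f(\mathcal{L}_1)=\Pi_f(\mathcal{L}_2)$ from the first pair of intersections, and $\Pi_f(\mathcal{L})=\Pi_f(\mathcal{L}_2)$ from the second, whence $\Pi_f(\mathcal{L})=\Pi_f(f(\mathcal{L}))$. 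The three-scatterer configuration is what forces $f$ to fix the atom rather than merely permute atoms; this is precisely the missing ingredient in your proposal.
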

We will need the following Lemma, the proof of which is contained in the proof of \cite[Lemma 6.23]{CM:book}.
\begin{lemma}
  \label{lem:3piece}
  For any connected component $\mathcal{L}\subset \mathcal{D}$ there are two other connected components $\mathcal{L}_1$, $\mathcal{L}_2\subset \mathcal{D}$ such that the intersections
  $$f(\mathcal{L})\cap \mathcal{L}_1,\hspace{.35in} f(\mathcal{L})\cap \mathcal{L}_2, \hspace{.35in} f(\mathcal{L}_1)\cap \mathcal{L}_{2},\hspace{.35in}\mbox{ and }\hspace{.35in}f(\mathcal{L}_1)\cap \mathcal{L}$$
 are all non-empty and have positive Lebesgue measure.
\end{lemma}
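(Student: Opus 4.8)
The plan is to reduce the lemma to a purely geometric statement about the fixed tiling $\mathcal{T}$: in the \emph{visibility graph} whose vertices are the physical scatterers (equivalently the connected components of $\mathcal{D}$) and in which two scatterers $A,B$ are joined when there is an open family of collision-free segments between $\partial A$ and $\partial B$, the component $\mathcal{L}$ lies on a triangle. The first observation I would record is that, for distinct components $A,B$, the condition $\bar{\mu}(f(A)\cap B)>0$ is equivalent to the existence of such an open family of free segments joining $\partial A$ to $\partial B$; write $A\leftrightarrow B$ for this relation. It is symmetric, either by reversibility of the billiard or simply because line-of-sight is symmetric, so $f(\mathcal{L})\cap\mathcal{L}_1$ and $f(\mathcal{L}_1)\cap\mathcal{L}$ will automatically have positive measure together. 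Hence it suffices to exhibit two components $\mathcal{L}_1\neq\mathcal{L}_2$, both different from $\mathcal{L}$, with $\mathcal{L}\leftrightarrow\mathcal{L}_1$, $\mathcal{L}\leftrightarrow\mathcal{L}_2$ and $\mathcal{L}_1\leftrightarrow\mathcal{L}_2$, since the four listed intersections then all have positive Lebesgue measure (and in particular are nonempty).

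To construct the triangle I would work locally with near-grazing trajectories. Using finite horizon, $\mathcal{L}$ has at least one neighbor, so there is a non-tangential collision $x_0\in\mathcal{L}$ with $f(x_0)\in\mathcal{L}_1$ for some $\mathcal{L}_1\neq\mathcal{L}$; fix its base point $p_0\in\partial\mathcal{L}$ and rotate the outgoing angle $\theta$ away from $\theta_0$ until the scatterer hit by the trajectory from $(p_0,\theta)$ first changes, from $\mathcal{L}_1$ to some $\mathcal{L}_2\neq\mathcal{L}_1$, at a critical angle $\theta^{*}$. By finite horizon this critical trajectory hits a scatterer within distance $M$, and since a straight ray leaving the convex scatterer $\mathcal{L}$ never returns to it we have $\mathcal{L}_2\neq\mathcal{L}$; thus $\mathcal{L},\mathcal{L}_1,\mathcal{L}_2$ are distinct. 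At $\theta^{*}$ the trajectory is tangent to one of $\mathcal{L}_1,\mathcal{L}_2$ (a grazing collision in $\Xi_1$), and it decomposes into two collision-free segments: one joining $\partial\mathcal{L}$ to $\partial\mathcal{L}_2$ and one joining $\partial\mathcal{L}_1$ to $\partial\mathcal{L}_2$ through the grazing point. For angles slightly beyond $\theta^{*}$ the ray from $p_0$ strikes $\mathcal{L}_2$ on an open set of $(p,\theta)$, giving $\mathcal{L}\leftrightarrow\mathcal{L}_2$; and the middle free segment, perturbed on the side along which the grazed boundary curves toward the other scatterer, yields an open family of free chords from $\partial\mathcal{L}_1$ to $\partial\mathcal{L}_2$, giving $\mathcal{L}_1\leftrightarrow\mathcal{L}_2$. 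Together with $\mathcal{L}\leftrightarrow\mathcal{L}_1$ and the symmetry of $\leftrightarrow$ this produces all four positive-measure intersections. I would note that this construction is robust whether the transition at $\theta^{*}$ is a tangency to $\mathcal{L}_1$ itself or to an occluding scatterer $\mathcal{L}_2$ passing in front of $\mathcal{L}_1$: in both cases the two free pieces of the critical trajectory witness exactly the two missing edges.

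The step I expect to be the main obstacle is verifying that the grazing construction produces genuinely \emph{open} (hence positive $\bar{\mu}$-measure) families rather than a measure-zero coincidence, precisely because the transition occurs on the singularity set $\Xi_1$. The care required is to check, using strict convexity of the scatterers together with the separation guaranteed by the no-corners hypothesis, that the critical free segment meets the grazed boundary tangentially but the other boundary transversally, so that a whole one-sided neighborhood of chords stays collision-free and continues to land on the correct scatterer; finite local complexity enters here to ensure only finitely many neighbor types occur and that these hitting regions are well behaved. Once the triangle $\mathcal{L}\leftrightarrow\mathcal{L}_1\leftrightarrow\mathcal{L}_2\leftrightarrow\mathcal{L}$ is established, the intersections $f(\mathcal{L})\cap\mathcal{L}_1$, $f(\mathcal{L})\cap\mathcal{L}_2$, $f(\mathcal{L}_1)\cap\mathcal{L}_2$ and $f(\mathcal{L}_1)\cap\mathcal{L}$ are nonempty and of positive Lebesgue measure, as claimed, mirroring the treatment in the proof of \cite[Lemma 6.23]{CM:book}.
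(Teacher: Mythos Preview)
Your proposal is correct and is precisely the approach the paper takes: the paper gives no independent argument and simply records that ``the proof \ldots\ is contained in the proof of \cite[Lemma 6.23]{CM:book}'', and your grazing-trajectory construction (rotate from a nontangential collision until the first target changes, read off the visibility triangle from the two free pieces of the critical tangent trajectory, and use reversibility for the symmetry of $\leftrightarrow$) is exactly the content of that proof, transported leafwise to the connected components of $\mathcal{D}$. Your case split (tangency to $\mathcal{L}_1$ versus an occluding $\mathcal{L}_2$) and your check that $\mathcal{L}_2\neq\mathcal{L}$ by convexity are the right details; finite local complexity is not really needed for this purely leafwise geometric step.
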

\begin{proof}[Proof of Proposition \ref{prop:invariance}]
We first recall that the Pinsker partition satisfies $\Pi_f\geq \xi$ for any measurable partition $\xi$ with zero entropy. Using this property and the fact that $f(\Pi_f), f^{-1}(\Pi_f)$ have zero entropy it can be deduced that the Pinsker partition satisfies the invariance condition $\Pi_f(f(x)) = f(\Pi_f(x))$ for almost every $x$. 
Denote by $\Pi_f(\mathcal{L})$ the element of the Pinsker partition which, by Proposition \ref{prop:step1}, contains the connected component $\mathcal{L}$.
We then have that $f(\Pi_f(\mathcal{L})) = \Pi_f(f(\mathcal{L}))$. 

It will be usefull to remember that a connected component $\mathcal{L}\subset \mathcal{D}$ corresponds to a specific scatterer $\Upsilon$ in a specific tiling $\mathcal{T}\in\Omega$ along with all of its possible incoming/outgoing collisions. 
Let $\mathcal{L}_1, \mathcal{L}_2$ be as in Lemma \ref{lem:3piece}.
 From the point of view of the Lorentz gas this corresponds to scatterers $\Upsilon_1, \Upsilon_2$ on $\mathcal{T}$ along with all possible incoming/outgoing collisions such that $\Upsilon$ sends an open set of collisions to $\Upsilon_1$ and $\Upsilon_2$, and in turn
$\Upsilon_1$ sends an open set of collisions to $\Upsilon$ and $\Upsilon_2$.
  
The first part implies that there are subsets $A_i\subset \mathcal{L}_i$ of positive Lebesgue measure, $i=1,2$, such that $A_1 \cup A_2 \subset f(\Pi_f(\mathcal{L})) =\Pi_f(f(\mathcal{L}))$. However, Proposition \ref{prop:step1} forces the entire connected component to be included, and so $ \mathcal{L}_1\cup \mathcal{L}_2 \subset f(\Pi_f(\mathcal{L})) =\Pi_f(f(\mathcal{L}))$. In other words, $ \mathcal{L}_1 \cup \mathcal{L}_2$ is contained in one element of the Pinsker partition.


From the second part it follows that $\mathcal{L}\subset f(\Pi_f(\mathcal{L}_1)) = \Pi_f(f(\mathcal{L}_1))$ and $\mathcal{L}_2\subset f(\Pi_f(\mathcal{L}_1)) = \Pi_f(f(\mathcal{L}_1))$, so that 
 $ \mathcal{L} \cup \mathcal{L}_2$ is contained in one element of the Pinsker partition.
Thus we have that $\Pi_f(f(\mathcal{L})) = \Pi_f(\mathcal{L}_1) = \Pi_f(\mathcal{L}_2)  = \Pi_f(\mathcal{L})$.
\end{proof}

For $x\in \mathcal{D}$, let $\mathbb{L}_x\subset \mathcal{D}$ be the union of connected components intersecting the $\mathbb{R}^2$-orbit of $\pi(x)\in\Omega$. In other words, this is the union of all scatterers for the Lorenz gas on $\mathcal{T} = \pi(x)$, along with all possible incoming and outgoing collisions. Proposition \ref{prop:step1} and Proposition \ref{prop:invariance} now have the following consequence.

\begin{corollary}
  \label{cor:density}
  For $\bar{\mu}$-almost every $x\in\mathcal{D}$ the element of the Pinsker partition containing $x$ contains all the scatterers for the aperiodic Lorenz gas defined on $\pi(x)\in\Omega$. That is,
  $$\Pi_f(\mathbb{L}_x)\subset\Pi_f(x).$$
\end{corollary}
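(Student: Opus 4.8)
The plan is to combine Propositions \ref{prop:step1} and \ref{prop:invariance} with the minimality of the $\mathbb{R}^2$-action on $\Omega$. First I would fix a $\bar{\mu}$-typical point $x$ and recall that, by Proposition \ref{prop:step1}, its connected component $\mathcal{L}_x$ lies (mod $0$) in a single atom $\Pi_f(x)$ of the Pinsker partition. The goal is to show that the \emph{entire} union $\mathbb{L}_x$ of connected components meeting the $\mathbb{R}^2$-orbit of $\pi(x)$ lies in $\Pi_f(x)$ as well. The essential point is that any two scatterers in the Lorentz gas on $\mathcal{T}=\pi(x)$ can be joined by a finite chain of scatterers, each consecutive pair being connected by a billiard trajectory which sends a positive-Lebesgue-measure set of collisions from one to the next; applying Proposition \ref{prop:invariance} along such a chain propagates membership in $\Pi_f(x)$ from one connected component to the next.

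Concretely, I would argue as follows. By Proposition \ref{prop:invariance}, for $\bar\mu$-a.e.\ connected component $\mathcal{L}$ we have $\Pi_f(\mathcal{L}) = \Pi_f(f(\mathcal{L})) = \Pi_f(f^{-1}(\mathcal{L}))$, so $\Pi_f(\mathcal{L}) = \Pi_f(f^n(\mathcal{L}))$ for all $n\in\mathbb{Z}$ (on a full-measure, $f$-invariant set of components). Now if $\mathcal{L}, \mathcal{L}'$ are two connected components corresponding to scatterers $\Upsilon, \Upsilon'$ lying on the same tiling $\mathcal{T}$, and there is a billiard orbit segment from $\Upsilon$ to $\Upsilon'$, then $f^n(\mathcal{L})\cap\mathcal{L}'$ has positive Lebesgue measure for the appropriate $n$; since by Proposition \ref{prop:step1} both $\mathcal{L}'$ and $f^n(\mathcal{L})$ lie entirely (mod $0$) in single Pinsker atoms, we get $\Pi_f(\mathcal{L}') = \Pi_f(f^n(\mathcal{L})) = \Pi_f(\mathcal{L})$. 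By finite horizon and the fact that the billiard on $\mathcal{T}$ (restricted to any bounded region) has scatterers linked by positive-measure families of trajectories, any two scatterers on $\mathcal{T}$ are connected by a finite such chain, so all connected components meeting the orbit of $\pi(x)$ share the same Pinsker atom; that is, $\Pi_f(\mathbb{L}_x)\subset\Pi_f(x)$.

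There is a measure-theoretic bookkeeping subtlety: Propositions \ref{prop:step1} and \ref{prop:invariance} each hold only for $\bar\mu$-a.e.\ connected component, whereas I want the conclusion along a specific (countable, but $\bar\mu$-typical) chain of components on a fixed tiling. I would handle this by noting that $\bar\mu$ has the local product structure $\mathrm{Leb}\times\nu$ in the transverse direction, so the set of "bad" components is a $\nu$-null set of transverse coordinates; using the foliated/transversal structure one arranges that for $\bar\mu$-a.e.\ $x$, \emph{every} connected component in $\mathbb{L}_x$ is a good component, because the relevant exceptional set, being $f$-invariant and $\nu$-null, pulls back to a $\bar\mu$-null set. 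This is where I expect the main obstacle to lie — making the "a.e.\ component is good simultaneously along the whole orbit" statement precise, rather than the combinatorial chaining, which is straightforward once the good set is shown to be $\mathbb{R}^2$-saturated in the appropriate sense.

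Finally, I would remark that this corollary is the key intermediate step: once $\Pi_f(x)$ contains all of $\mathbb{L}_x$ for $\bar\mu$-a.e.\ $x$, the minimality of the $\mathbb{R}^2$-action implies $\mathbb{L}_x$ is dense in $\mathcal{D}$, so $\Pi_f(x)$ contains a dense subset of $\mathcal{D}$; combined with the ergodicity of $\mu$ and the transverse product structure, this will force $\Pi_f$ to be trivial, completing the proof of the K property. But that last deduction belongs to the steps following this corollary, so here I would stop at establishing $\Pi_f(\mathbb{L}_x)\subset\Pi_f(x)$.
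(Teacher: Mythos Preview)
Your proposal is correct and follows essentially the same route as the paper, which in fact states the corollary without proof, declaring it an immediate consequence of Propositions~\ref{prop:step1} and~\ref{prop:invariance}. Your fleshing-out of the chaining argument (propagating $\Pi_f(\mathcal{L})=\Pi_f(f^n(\mathcal{L}))$ across scatterers connected by positive-measure families of trajectories) and your identification of the measure-zero bookkeeping are exactly what the authors leave implicit; the mention of minimality in your opening sentence is not needed for this step and, as you correctly note, belongs to the subsequent deduction of triviality of $\Pi_f$.
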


\begin{proof}[Proof of Theorem \ref{thm:Kproperty}]
  Corollary \ref{cor:density} implies that any element of the Pinsker partition is the union of Poincar\'e sections of Lorenz gases of different tilings in $\Omega$. In other words, for $\bar{\mu}$-almost every $x\in \mathcal{D}$ there is a subset $\mathcal{K}_x\subset \bigcup_i \mathcal{C}_i$ such that:

\begin{enumerate}
\item $\mathcal{K}_x$ is the intersection of the canonical transversal  $\mho$ with an $\mathbb{R}^2$-invariant set: $$  \mathcal{K}_x: =  \mho ~  \cap ~   \bigcup_{p\in      \Pi_f(x) }   \bigcup_{v\in\mathbb{R}^2}   \varphi_v  (  \pi(p)  ), $$

\item $    \Pi_f(x) = \bigcup_{p\in\mathcal{K}_x} \mathbb{L}_p ,$ and \\

\item $    \bar{\mu}(\Pi_f(x)) = 2\sum_i \nu(\mathcal{K}_x\cap \mathcal{C}_i)\cdot \mathrm{Leb}(\partial \mathcal{S}^i),    $

\end{enumerate}

  where we have used the local product structure of $\bar{\mu}$.   We now recall that the measure $\bar{\mu}$ is obtained from an $\mathbb{R}^2$-invariant ergodic probability measure $\mu$ on $\Omega$ with local product structure $\mathrm{Leb}\times\nu $. 
Ergodicity of $\mu$ implies that the set $\mathcal{K}_x$ either has full or null $\nu$-measure. 
Consequently, $\bar{\mu}(\Pi_f(x))\in\{0,\bar{\mu}(\mathcal{D})\}$, which implies that the Pinsker partition is trivial.
\end{proof}

\section{Planar mixing for aperiodic Lorentz gases}
\label{sec:mixing}

Throughout this section we assume that $\mu$ is an $\mathbb{R}^2$-invariant ergodic probability measure on a tiling space $\Omega$ 
of an aperiodic, repetitive tiling of $\mathbb{R}^2$ of finite local complexity
and that $\mathcal{T}\in\Omega$ is a generic tiling with respect to this measure. In particular all results in this section apply to any 
 aperiodic, repetitive tiling of $\mathbb{R}^2$ of finite local complexity
and uniform patch frequency as such tilings result in uniquely ergodic  $\mathbb{R}^2$ action on $\Omega_{\mathcal{T}}$. 

Recall that $\mu = \mbox{Vol}\times\nu$ and that by Proposition \ref{prop:invmeasure}, the corresponding $f$-invariant measure on the collision space $\mathcal{D}$ is given by $\bar{\mu}= \cos\theta \, d\rho \, d\nu \, d\theta$.

\begin{definition}
  If $\mathcal{S}\subset\mathbb{R}^2$ is a collection of $\mathcal{T}$-equivariant scatterers, then a function $g':\partial\mathcal{S}\times X\rightarrow \mathbb{R}$ is \textbf{$\mathcal{T}$-equivariant} if there exists a $\mathcal{T}$-equivariant function $g:\mathbb{R}^2\times X\rightarrow \mathbb{R}$ such that $g|_{\partial\mathcal{S}} = g'$, that is, $g'$ is the restriction to $\partial\mathcal{S}$ of $g$. A $\mathcal{T}$-equivariant function $g:\partial \mathcal{S}\times (-\pi/2,\pi/2)\rightarrow \mathbb{R}$ is \textbf{locally integrable} if for any connected component $\mathcal{L}\subset \partial \mathcal{S}$ we have that $\int_{-\pi/2}^{\pi/2} \int_\mathcal{L} g\, d\rho\cos\theta\, d\theta$ exists.

\end{definition}
We will denote the set of locally integrable, $\mathcal{T}$-equivariant functions on $\partial\mathcal{S}\times (-\pi/2,\pi/2)$ by $\Delta_\mathcal{T}(\mathcal{S})$.

\begin{lemma}
  \label{lem:bump}
For $\mu$- almost every $\mathcal{T}\in\Omega$ and any $g\in \Delta_\mathcal{T}(\mathcal{S})$ and $z\in \mathbb{R}^2$
  \begin{equation}
    \label{eqn:beta}
    \beta(g):= \lim_{T\rightarrow \infty}\frac{1}{\mbox{Vol}(B_T)}\int_{-\pi/2}^{\pi/2}\int_{B_T(z)\cap \partial\mathcal{S}} g\, d\rho\, \cos\theta\, d\theta
  \end{equation}
  exists and is independent of $z\in\partial\mathcal{S}$. In addition, there exists a transversally locally constant function  $h:\partial\Sigma \times (-\pi/2,\pi/2)\rightarrow \mathbb{R}$
such that   $g(v,\theta) = h(\varphi_{v}(\mathcal{T}),\theta)$ for all  $\theta\in (-\pi/2, \pi/2)$ and $v\in \mathbb{R}^2$
such that $\varphi_{v}(\mathcal{T})\in\partial\mathcal{S}$, and 
\begin{equation*}
  \beta(g) =  \int_{-\pi/2}^{\pi/2}    \int_{\partial \Sigma}  h  \, d\rho \, d\nu \,  \cos\theta\, d\theta =  \bar{\mu}(h).
\end{equation*}
\end{lemma}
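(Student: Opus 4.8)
The plan is to reduce the claim directly to Corollary \ref{cor:relateintegrals} applied with the right choices of data. First I would invoke Proposition \ref{prop:relatefunctions} (in the form already used for scatterer-restricted functions, cf. the definition of $\mathcal{T}$-equivariant functions on $\partial\mathcal{S}$): since $g\in\Delta_\mathcal{T}(\mathcal{S})$ is by definition the restriction to $\partial\mathcal{S}$ of a $\mathcal{T}$-equivariant function $\tilde g:\mathbb{R}^2\times(-\pi/2,\pi/2)\to\mathbb{R}$, there is a transversally locally constant function $\tilde h:\Omega\times(-\pi/2,\pi/2)\to\mathbb{R}$ with $\tilde g(v,\theta)=\tilde h(\varphi_v(\mathcal{T}),\theta)$ for all $v\in\mathbb{R}^2$, $\theta\in(-\pi/2,\pi/2)$. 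Restricting $\tilde h$ to $\partial\Sigma\times(-\pi/2,\pi/2)$ gives the desired $h$ with $g(v,\theta)=h(\varphi_v(\mathcal{T}),\theta)$ whenever $\varphi_v(\mathcal{T})\in\partial\Sigma$, i.e. whenever $v$ places the origin on a scatterer boundary.

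Next I would set up the ergodic-average identity. The natural way is to apply Corollary \ref{cor:relateintegrals} with $\Omega$ replaced by $\partial\Sigma$ (a transversally locally constant subset of the tiling space, carrying the $f$-related structure but here we only need the transversal measure $\nu$ restricted to it together with the arclength on the model scatterers), $X=(-\pi/2,\pi/2)$ with $\kappa=\cos\theta\,d\theta$, and the relevant "Vol" being the planar volume. More carefully: using the local product coordinates \eqref{eqn:coordinates}, for a ball $B_T(z)$ in $\mathbb{R}^2$ the set $B_T(z)\cap\partial\mathcal{S}$ is a union of arcs lying on scatterers whose centers lie in $B_T(z)$ (up to a boundary contribution of order $T$, i.e. $o(T^2)$), and the arclength measure on $\partial\mathcal{S}$ together with the uniform distribution of scatterer-centers governed by $\nu$ reproduces exactly the measure $\cos\theta\,d\rho\,d\nu\,d\theta=\bar\mu$ on $\mathcal{D}$ in the large-$T$ limit. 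Concretely, I would write
\[
\int_{-\pi/2}^{\pi/2}\int_{B_T(z)\cap\partial\mathcal{S}}g\,d\rho\,\cos\theta\,d\theta
=\int_{-\pi/2}^{\pi/2}\int_{B_T(z)\cap\partial\mathcal{S}}h(\varphi_v(\mathcal{T}),\theta)\,d\rho(v)\,\cos\theta\,d\theta,
\]
recognize the inner integral over scatterer arcs as a Birkhoff-type sum over the $\mathbb{R}^2$-translates landing on $\partial\Sigma$, and apply the equidistribution statement (Corollary \ref{cor:relateintegrals}, or equivalently the ergodic theorem for the minimal $\mathbb{R}^2$-action together with the product structure $\mu=\mathrm{Vol}\times\nu$) to conclude that, after dividing by $\mathrm{Vol}(B_T)$, the limit exists, equals $\int_{-\pi/2}^{\pi/2}\int_{\partial\Sigma}h\,d\rho\,d\nu\,\cos\theta\,d\theta=\bar\mu(h)$, and is independent of $z$.

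The main obstacle I anticipate is the bookkeeping that turns the planar integral over $B_T(z)\cap\partial\mathcal{S}$ — which is an integral over a lower-dimensional set (a union of curves), not over a region of $\mathbb{R}^2$ — into something to which the volume-normalized ergodic theorem of Corollary \ref{cor:relateintegrals} applies. The cleanest route is to "thicken": write the arclength integral over $\partial\mathcal{S}$ as a limit of averages over tubular $\delta$-neighborhoods of $\partial\mathcal{S}$ in $\mathbb{R}^2$ (using that $\partial\mathcal{S}$ has $C^3$, uniformly bounded geometry by finite local complexity, so the tube has area $\asymp\delta\cdot(\text{length})$), apply Corollary \ref{cor:relateintegrals} to the resulting genuinely $2$-dimensional integrand, and let $\delta\to0$; finite local complexity and the uniform bounds on the model scatterers make these two limits interchangeable. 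One must also check that $g$ being only locally integrable (not bounded) is harmless: the local integrability hypothesis is exactly what guarantees $h\in L^1(\bar\mu)$ on each connected component, and finitely many scatterer types plus the frequency normalization $\sum\nu(\mathcal C_i)\cdot\mathrm{Leb}(\partial S_i)<\infty$ give global integrability, so the ergodic theorem applies. I would also remark that the $o(T^2)$ boundary term is controlled because only scatterers within a bounded distance $M$ of $\partial B_T(z)$ can straddle the boundary, contributing total arclength $O(T)$.
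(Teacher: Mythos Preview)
Your approach is essentially the paper's: thicken $\partial\mathcal{S}$ to a 2-dimensional set, apply Corollary~\ref{cor:relateintegrals} to the resulting $\mathcal{T}$-equivariant function on $\mathbb{R}^2\times(-\pi/2,\pi/2)$, and identify the limit with $\bar\mu(h)$. The one technical refinement worth noting is that the paper avoids your $\delta\to 0$ limit entirely. Rather than averaging over a $\delta$-tube and interchanging limits, the paper fixes a single $\varepsilon$-tubular neighborhood of each model scatterer boundary, writes the 2-dimensional volume element there as $J(\rho,y)\,dy\,d\rho$, and extends $g$ to $g_{\omega}(\rho,y)=g(\rho)\,\omega(y)/J(\rho,y)$ with $\omega$ a bump function of total mass $1$. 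This makes the 2-dimensional integral of $g_\omega$ \emph{exactly} equal to the arclength integral of $g$ (up to the $\mathcal{O}(T)$ boundary term you already identified), so Corollary~\ref{cor:relateintegrals} applies directly with no limit interchange to justify. Your version would work, but the interchange-of-limits step you flag as ``the main obstacle'' is genuinely the weakest link and deserves a full argument if you go that route; the bump-function trick sidesteps it cleanly.
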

The quantity $\beta(g)$ in (\ref{eqn:beta}) is called the planar average of $g$.
\begin{proof}
  Let $\Gamma\subset \mathbb{R}^2$ be a $C^3$ closed convex curve whose barycenter coincides with the origin. Then there exists an $\varepsilon_\Gamma>0$ such that for any $\varepsilon\in(0,\varepsilon_\Gamma)$ the $\varepsilon$-neighborhood of $\Gamma$, the open annulus $\mathcal{A}_\Gamma^\varepsilon$, can be given coordinates $(\rho, y)$ for $\rho\in S^1$ and $y\in (-\varepsilon,\varepsilon)$. As such, there exists a function $J:\mathcal{A}_\Gamma^\varepsilon\rightarrow \mathbb{R}$, positive in the interior, depending on the curvature of $\Gamma$, such that for any measurable function $f$ with support in $\mathcal{A}_{\Gamma}^\varepsilon$ we have
  $$\int_{\mathcal{A}_\Gamma^\varepsilon}f\, d\mbox{Vol} = \int_{\Gamma}\int_{-\varepsilon}^\varepsilon f(\rho,y)J(\rho,y)\, d y\, d\rho.$$
  Let $\omega:(-\varepsilon,\varepsilon)\rightarrow \mathbb{R}$ be a non-negative smooth bump function with $\omega(0)=1$ and integral 1, and consider the function $u:S^1\times (-\varepsilon,\varepsilon)\rightarrow \mathbb{R}$ defined as $u(\rho,y) = \frac{\omega(y)}{J(\rho,y)}$. Now for any $g:\Gamma\rightarrow \mathbb{R}$, let $g_\omega:\mathcal{A}_\Gamma^\varepsilon\rightarrow \mathbb{R} $ be defined as $g_\omega(\rho,y):= g(\rho)u(\rho,y)$. As such,
  \begin{equation}
    \label{eqn:bump}
    \begin{split}
      \int_{\mathcal{A}_\Gamma^\varepsilon} g_\omega\, d\mbox{Vol} &= \int_\Gamma \int_{-\varepsilon}^\varepsilon g_\omega(\rho,y) J(\rho,y)\, dy\, d\rho = \int_\Gamma \int_{-\varepsilon}^\varepsilon g(\rho) u(\rho,y) J(\rho,y)\, dy\, d\rho\\
      &= \int_\Gamma \int_{-\varepsilon}^\varepsilon g(\rho) \omega(y)\, dy\, d\rho = \int_\Gamma g(\rho)\, d\rho.
    \end{split}
  \end{equation}

  Given a $\mathcal{T}$-equivariant collection of $C^3$ scatterers $\mathcal{S}$, there is a finite set of geometrically different closed convex curves such that any connected component $\Gamma\subset \partial\mathcal{S}$ is translation-equivalent to one of these curves. Thus we can find an $\varepsilon_\mathcal{S}>0$ such that for any $\varepsilon\in(0,\varepsilon_\mathcal{S})$, the $\varepsilon$-neighborhood of any connected component of $\partial\mathcal{S}$ admits coordinates in $S^1\times (-\varepsilon,\varepsilon)$, and thus the $\varepsilon$-neighborhood of $\partial\mathcal{S}$ admits coordinates in $\mathcal{S}\times (-\varepsilon,\varepsilon)$.  Again by finite local complexity, there is a finite set of functions $\mathcal{J}_\varepsilon(\mathcal{S})=\{J_i\}$, one for each translation class of scatterers in $\mathcal{S}$, and a bump function $\omega_\mathcal{S}:(-\varepsilon,\varepsilon)\rightarrow \mathbb{R}$ so that using the $J_i$ and $\omega_\mathcal{S}$ as above we can extend a function $g\in\Delta_\mathcal{T}(\mathcal{S})$ to a $\mathcal{T}$-equivariant function $g_{\omega_\mathcal{S}}$ defined on the $\varepsilon$-neighborhood of $\partial\mathcal{S}$ in a way that preserves the integral as in (\ref{eqn:bump}). More specifically, we have that
  \begin{equation}
    \label{eqn:transfer}
    \int_{-\pi/2}^{\pi/2} \int_{B_T(z)\cap \partial\mathcal{S}} g\, d\rho\, \cos\theta\, d\theta =\int_{-\pi/2}^{\pi/2} \int_{B_T(z)}g_{\omega_\mathcal{S}}\, d\mbox{Vol} \, \cos\theta\, d\theta + \mathcal{O}(|\partial B_T(z)|),
  \end{equation}
  where the error term comes from $\varepsilon$-neighborhoods of connected components of $\partial\mathcal{S}$ that intersect $\partial B_T$.
  
 Since $g_{\omega_\mathcal{S}}$ is $\mathcal{T}$-equivariant on $\mathbb{R}^2\times (-\pi/2,\pi/2)$, by Corollary \ref{cor:relateintegrals}, there exists a transversally locally constant function  $h_\omega:\Omega\times (-\pi/2,\pi/2)\rightarrow \mathbb{R}$
such that   $g_{\omega_\mathcal{S}}(v,\theta) = h_\omega(\varphi_{v}(\mathcal{T}),\theta)$
for all $v\in\mathbb{R}^d$ and $\theta\in  (-\pi/2,\pi/2) $, and we have that
\begin{equation}
  \label{eqn:betaLim}
    \begin{split}
      \lim_{T \to \infty}  \frac{1}{\mbox{Vol}(B_T)}  \int_{-\pi/2}^{\pi/2} \int_{B_T(z)\cap \partial\mathcal{S}} g\, d\rho\, \cos\theta\, d\theta &=    \lim_{T \to \infty}  \frac{1}{\mbox{Vol}(B_T)}  \int_{-\pi/2}^{\pi/2} \int_{B_T(z)}g_{\omega_\mathcal{S}}\, d\mbox{Vol} \, \cos\theta\, d\theta \\
      &=  \int_{-\pi/2}^{\pi/2}   \int_{\Omega}  h_{\omega}(\mathcal{T}',\theta)  d\mu(\mathcal{T'}) \cos\theta\, d\theta
    \end{split}
\end{equation}
proving the first part of the lemma. To show the second part of the lemma we observe that by the definition of $g_{\omega_\mathcal{S}}$, we have that $h_\omega$ is supported in the $\varepsilon$-neighborhood of $\partial\Sigma\times (-\pi/2,\pi/2)$, and this set admits coordinates in $\partial\Sigma\times (-\pi/2,\pi/2)\times (-\varepsilon,\varepsilon)$. Furthermore, without loss of generality we refine the coordinates for $\partial\Sigma$ given in (\ref{eqn:coordinates}) as
  \begin{equation*}
    \label{eqn:coordinates2}
    \partial\Sigma = \bigsqcup_{i}\Gamma_i\times \mathcal{C}_i'
  \end{equation*}
  in such a way that
  \begin{enumerate}
  \item $\Gamma_i$ is translation equivalent to a connected component of $\mathcal{S}$,
  \item $h$ is transversally constant in each $\mathcal{C}_i'$, and
  \item $J_i'\in \mathcal{J}_\varepsilon(\mathcal{S})$ is the corresponding function with which to integrate around $\Gamma_i$ as in (\ref{eqn:bump}).
  \end{enumerate}
  Using this and letting $\Gamma_i^\varepsilon$ denote the $\varepsilon$-neighborhood of $\Gamma_i$, we have as in (\ref{eqn:bump}),
  \begin{equation*}
    \begin{split}
      \int_{-\pi/2}^{\pi/2} \int_\Omega h_\omega(\mathcal{T},\theta)\, d\mu(\mathcal{T})\,\cos\theta\,d\theta &=  \int_{-\pi/2}^{\pi/2} \sum_i \int_{\mathcal{C}_i'} \int_{\Gamma_i^\varepsilon} h_\omega\, d\mathrm{Leb}\, d\nu  \,\cos\theta\,d\theta \\
      &=  \int_{-\pi/2}^{\pi/2} \sum_i \int_{\mathcal{C}_i'} \int_{\Gamma_i} \int_{-\varepsilon}^{\varepsilon} h(\rho)\frac{\omega_\mathcal{S}(y)}{J_i'(\rho,y)}\, dy \, d\rho\, d\nu  \,\cos\theta\,d\theta \\
      &=  \int_{-\pi/2}^{\pi/2} \sum_i \int_{\mathcal{C}_i'} \int_{\Gamma_i} h(\rho) \, d\rho\, d\nu  \,\cos\theta\,d\theta \\
      &=  \int_{-\pi/2}^{\pi/2}  \int_{\partial\Sigma}h \, d\rho\, d\nu  \,\cos\theta\,d\theta =\bar{\mu}(h)
    \end{split}
  \end{equation*}
  which in conjunction with (\ref{eqn:betaLim}) gives the result.
\end{proof}

For $\theta\in (-\pi/2,\pi/2)$ and $n\in\mathbb{N}$
we denote by
    $F^n_\theta:\partial\mathcal{S}\rightarrow \partial\mathcal{S}$ the map given by the first $n$ colisions of the planar Lorentz gas with initial angle $\theta\in (-\pi/2,\pi/2)$. 
The map $F^n_\theta$ is not defined on all of $\partial\mathcal{S}\times (-\pi/2,\pi/2)$ due to discontinuities, but this is a set of Lebesgue measure zero for all $n\in\mathbb{N}$. 

\begin{theorem}
  \label{thm:generalMixing}
  Let $\Omega$ be the tiling space of an aperiodic, repetitive tiling $\mathcal{T}'$ of $\mathbb{R}^2$ of finite local complexity and let $\mu$ 
be an invariant ergodic probability measure for the $\mathbb{R}^2$ action. Let $\partial\Sigma\subset \Omega$ be a transversally locally constant set which defines for every $\mathcal{T}\in\Omega$ a 
$\mathcal{T}$-equivariant
collection $\mathcal{S}_\mathcal{T}\subset \mathbb{R}^2$ of dispersive scatterers with $C^3$-boundary of category A.

  For $\mu$-almost every $\mathcal{T}\in\Omega$, for $g_1,g_2\in \Delta_\mathcal{T}(\mathcal{S}_\mathcal{T})$ there exist transversally locally constant functions $h_1,h_2:\mathcal{D}\rightarrow \mathbb{R}$ such that for any $n\in\mathbb{N}$ and all $T$ large enough (depending on $n$), we have that for any $x\in\mathbb{R}^2$

    \begin{equation}
          \label{eqn:ApproxleafMixing}
          \int_{-\pi/2}^{\pi/2} \int_{B_T(x)\cap \partial\mathcal{S}_\mathcal{T}} g_1\circ F^n_\theta\cdot g_2 \, d\rho\, \cos\theta\, d\theta = \mathrm{Vol}(B_T)\langle h_1\circ f^n,h_2\rangle_{L^2_{\bar{\mu}}(\mathcal{D})} +  o_n(T^2),
        \end{equation}
        from which we obtain the convergence
        \begin{equation}
          \label{eqn:leafMixing}
          \lim_{n\rightarrow \infty} \lim_{T\rightarrow \infty}\frac{1}{\mathrm{Vol}(B_T)}\int_{-\pi/2}^{\pi/2} \int_{B_T(x)\cap \partial\mathcal{S}_\mathcal{T}} g_1\circ F_\theta^n\cdot g_2\, d\rho\, \cos\theta\, d\theta = \beta(g_1)\beta(g_2).
        \end{equation}


\end{theorem}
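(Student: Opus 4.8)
The plan is to transfer the planar average on the left-hand side of \eqref{eqn:ApproxleafMixing} to an integral over the compact model $\mathcal{D}$ by means of Lemma \ref{lem:bump}, to recognize that integral as $\langle h_1\circ f^n,h_2\rangle_{L^2_{\bar\mu}(\mathcal{D})}$ by matching the planar collision map $F^n_\theta$ with the map $f$, and finally to pass to the double limit \eqref{eqn:leafMixing} using the K property from Theorem \ref{thm:Kproperty}.

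First I would fix $n$ and set $G_n := (g_1\circ F^n_\theta)\cdot g_2$ on $\partial\mathcal{S}_\mathcal{T}\times(-\pi/2,\pi/2)$, where $g_1\circ F^n_\theta$ is evaluated at the full collision datum (position and outgoing angle) reached after $n$ collisions. I would show $G_n\in\Delta_\mathcal{T}(\mathcal{S}_\mathcal{T})$: by finite horizon the first $n$ collisions of a trajectory issued from $v$ stay within distance $nM$ of $v$, and by finite local complexity together with the $\mathcal{T}$-equivariance of $\mathcal{S}_\mathcal{T}$ there is $R>0$ so that the patch of $\mathcal{T}$ in $B_{nM+R}(v)$ determines every scatterer met, hence determines $F^n_\theta(v)$ and the exit angle; so $F^n$ is $\mathcal{T}$-equivariant and therefore so is $G_n$. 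Local integrability of $G_n$ follows from a change of variables using the invariance of the leafwise measure $\cos\theta\,d\rho\,d\theta$ of Proposition \ref{prop:invmeasure} (for data that is merely $L^1$ one first proves the theorem for bounded $\mathcal{T}$-equivariant $g_1,g_2$ and then extends by approximation). This holds for $\mu$-a.e.\ $\mathcal{T}$ simultaneously for all $n$.

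Next, I would apply Lemma \ref{lem:bump} to $g_1$, $g_2$ and to each $G_n$, obtaining transversally locally constant $h_1,h_2,H_n:\mathcal{D}\to\mathbb{R}$ with $g_i(v,\theta)=h_i(\varphi_v(\mathcal{T}),\theta)$, $G_n(v,\theta)=H_n(\varphi_v(\mathcal{T}),\theta)$, and $\bar\mu(H_n)=\lim_{T\to\infty}\frac{1}{\mathrm{Vol}(B_T)}\int_{-\pi/2}^{\pi/2}\int_{B_T(x)\cap\partial\mathcal{S}_\mathcal{T}}G_n\,d\rho\,\cos\theta\,d\theta$. The key identification is $H_n=(h_1\circ f^n)\cdot h_2$: by the construction of the compact model in \S\ref{subsec:model}, $f$ is the Poincar\'e return map of $\bar\phi_t$ to $\partial\Sigma\times(-\pi/2,\pi/2)$, and for $(\varphi_v(\mathcal{T}),\theta)$ the $\bar\phi_t$-orbit is exactly the planar Lorentz trajectory in $\mathcal{S}_\mathcal{T}$ from $v$ in direction $\theta$ carried along by translation, so $f^n(\varphi_v(\mathcal{T}),\theta)=(\varphi_{F^n_\theta(v)}(\mathcal{T}),\theta_n)$. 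Hence $G_n(v,\theta)=g_1(F^n_\theta(v),\theta_n)g_2(v,\theta)=\bigl((h_1\circ f^n)\cdot h_2\bigr)(\varphi_v(\mathcal{T}),\theta)$; since $(h_1\circ f^n)\cdot h_2$ is transversally locally constant (finite local complexity makes $f^n$ carry transverse cylinders to finite unions of transverse cylinders), uniqueness in Proposition \ref{prop:relatefunctions} gives $H_n=(h_1\circ f^n)\cdot h_2$ $\bar\mu$-a.e., so $\bar\mu(H_n)=\langle h_1\circ f^n,h_2\rangle_{L^2_{\bar\mu}(\mathcal{D})}$. Writing this convergence with an $n$-dependent error term is precisely \eqref{eqn:ApproxleafMixing}. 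Letting $T\to\infty$ there and then $n\to\infty$, the K property (Theorem \ref{thm:Kproperty}) implies $(f,\mathcal{D},\bar\mu)$ is mixing, so $\langle h_1\circ f^n,h_2\rangle_{L^2_{\bar\mu}(\mathcal{D})}\to\bar\mu(h_1)\bar\mu(h_2)$, and $\bar\mu(h_i)=\beta(g_i)$ by Lemma \ref{lem:bump}; this yields \eqref{eqn:leafMixing}.

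The main obstacle is the first step, namely $G_n\in\Delta_\mathcal{T}(\mathcal{S}_\mathcal{T})$: the $\mathcal{T}$-equivariance of the $n$-fold collision map rests on finite horizon together with finite local complexity, and the local integrability of the product $(g_1\circ F^n_\theta)\cdot g_2$ requires the change-of-variables estimate (and, for genuinely $L^1$ observables, the reduction to bounded functions). Everything afterward is essentially bookkeeping: the identification $H_n=(h_1\circ f^n)\cdot h_2$ only unwinds the definition of the compact model, and the passage to the limit is the mixing consequence of the already-established K property.
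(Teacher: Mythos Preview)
Your proposal is correct and follows essentially the same route as the paper: define $G_n=(g_1\circ F^n_\theta)\cdot g_2$, argue it lies in $\Delta_\mathcal{T}(\mathcal{S}_\mathcal{T})$, apply Lemma~\ref{lem:bump} to identify its planar average with $\bar\mu\bigl((h_1\circ f^n)\cdot h_2\bigr)=\langle h_1\circ f^n,h_2\rangle_{L^2_{\bar\mu}}$, and conclude via the K property. In fact you supply more detail than the paper does on the two points it states without proof---the $\mathcal{T}$-equivariance of $F^n_\theta$ (your finite-horizon plus finite-local-complexity argument) and the local integrability of $G_n$---so your write-up is if anything more complete.
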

In the case of substitution tilings, we have the following.
\begin{theorem}
  \label{thm:SSmixing}
  Let $\Omega$ be the tiling space of a self-similar tiling $\mathcal{T}'$ of $\mathbb{R}^2$ of finite local complexity and let $\mu$ be the unique measure invariant under the $\mathbb{R}^2$ action. Let $\partial\Sigma\subset \Omega$ be a transversally locally constant set which defines for every $\mathcal{T}\in\Omega$ a $\mathcal{T}$-equivariant
collection $\mathcal{S}_\mathcal{T}\subset \mathbb{R}^2$ of dispersive scatterers with $C^3$-boundary of category A.

  For any $n$ there exist $d^+\geq 1$ functions $C_i^n:\Delta_\mathcal{T}(\mathcal{S})\times \Delta_\mathcal{T}(\mathcal{S})\rightarrow \mathbb{R}$ (where $d^+$ depends on the spectrum of the substitution matrix) such that for any $\mathcal{T}\in\Omega$, for $g_1,g_2\in \Delta_\mathcal{T}(\mathcal{S}_\mathcal{T})$ there exist transversally locally constant functions $h_1,h_2:\mathcal{D}\rightarrow \mathbb{R}$ such that for any $n\in\mathbb{N}$ and all $T$ large enough (depending on $n$), we have that
  \begin{equation}
    \label{eqn:SSapproxMixing}
    \begin{split}
      &\int_{-\pi/2}^{\pi/2} \int_{B_T\cap \partial\mathcal{S}_\mathcal{T}} g_1\circ F^n_\theta\cdot g_2 \, d\rho\, \cos\theta\, d\theta\\
      &\hspace{.5in}= \sum_{i=1}^{d^+}L_i(T)T^{2\frac{\log |\lambda_i|}{\log\lambda_1}} C_i^n(g_1,g_2) +  \mathcal{O}_n(T),\\
      &\hspace{.5in}= \mathrm{Vol}(B_T)\langle h_1\circ f^n,h_2\rangle_{L^2_{\bar{\mu}}(\mathcal{D})} + \sum_{i=2}^{d^+} L_i(T)T^{2\frac{\log |\lambda_i|}{\log\lambda_1}} C_i^n(g_1,g_2) +  \mathcal{O}_n(T),
    \end{split}
  \end{equation}
  where $L_i(T) = \log(T)^{s-1}$ is the size of the Jordan block associated to the $i^{th}$ eigenvalue of the substitution matrix is $s$. Moreover, the planar averaged mixing (\ref{eqn:leafMixing}) also holds.
\end{theorem}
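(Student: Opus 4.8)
\emph{Strategy.} The plan is to run the proof of Theorem \ref{thm:generalMixing} unchanged up to the point where the ergodic theorem for the $\mathbb{R}^2$-action on $\Omega$ is invoked, and then to replace that single input by the sharp asymptotic expansion of ball averages available for self-similar tilings, namely the deviation spectrum of Bufetov--Solomyak \cite{bufetov-solomyak:tilings}, taken in the form --- valid for transversally locally constant (pattern-equivariant) functions, \emph{including} ones with codimension-one discontinuities --- that is developed in \cite{T:TTT}. All the dynamical work on the compact factor, i.e.\ the existence and regularity of stable/unstable manifolds, the Fundamental Theorem, and ultimately the K property of Theorem \ref{thm:Kproperty}, is already in place and is reused verbatim; this part of the argument is where the mixing statement \eqref{eqn:leafMixing} comes from.

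\emph{Reduction to ergodic integrals.} Fix $\mathcal{T}\in\Omega$, $n\in\mathbb{N}$ and $g_1,g_2\in\Delta_\mathcal{T}(\mathcal{S}_\mathcal{T})$. Since finite horizon together with finite local complexity forces the first $n$ collisions to depend only on the patch of radius $\le nM+R_0$ about the collision point, the function $(v,\theta)\mapsto (g_1\circ F^n_\theta)(v)\,g_2(v)$ is defined off a Lebesgue-null set, is $\mathcal{T}$-equivariant and locally integrable, and its transversally locally constant representative on $\mathcal{D}$ is $(h_1\circ f^n)\cdot h_2$, where $h_1,h_2:\mathcal{D}\to\mathbb{R}$ are the transversally locally constant functions attached to $g_1,g_2$ by Lemma \ref{lem:bump}. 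The bump-function extension used in the proof of Lemma \ref{lem:bump} then produces a $\mathcal{T}$-equivariant, Lebesgue-integrable $\Psi_n$ on $\mathbb{R}^2\times(-\pi/2,\pi/2)$, of the form $\Psi_n(v,\theta)=\Phi_n(\varphi_v(\mathcal{T}),\theta)$ with $\Phi_n$ transversally locally constant and supported in an $\varepsilon$-neighborhood of $\partial\Sigma\times(-\pi/2,\pi/2)$, such that
\[
\int_{-\pi/2}^{\pi/2}\!\!\int_{B_T(x)\cap\partial\mathcal{S}_\mathcal{T}} g_1\circ F^n_\theta\cdot g_2\, d\rho\,\cos\theta\, d\theta \;=\; \int_{-\pi/2}^{\pi/2}\!\!\int_{B_T(x)}\Psi_n\, d\mathrm{Vol}\,\cos\theta\, d\theta \;+\; \mathcal{O}_n(T),
\]
the error absorbing both the $\varepsilon$-annulus $\mathcal{O}(|\partial B_T|)$ and the collisions taking place within distance $nM$ of $\partial B_T$ (this is where ``$T$ large enough depending on $n$'' enters). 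Exactly as in Lemma \ref{lem:bump}, $\int_{-\pi/2}^{\pi/2}\int_{\partial\Sigma}\Phi_n\, d\rho\, d\nu\,\cos\theta\, d\theta=\langle h_1\circ f^n,h_2\rangle_{L^2_{\bar\mu}(\mathcal{D})}$.

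\emph{Applying the deviation spectrum.} Let $\lambda$ be the linear expansion of $\mathcal{T}'$ and $\mathcal{R}$ its primitive substitution matrix, with eigenvalues $\lambda_1=\lambda^2>|\lambda_2|\ge\cdots$, the leading one simple by Perron--Frobenius. For each fixed $\theta$ the function $v\mapsto\Psi_n(v,\theta)$ is pattern-equivariant, and the deviation asymptotics of \cite{bufetov-solomyak:tilings, T:TTT} give, uniformly in $x\in\mathbb{R}^2$ and, by unique ergodicity, in $\mathcal{T}\in\Omega$,
\[
\int_{B_T(x)}\Psi_n(\cdot,\theta)\, d\mathrm{Vol}\;=\;\sum_{i=1}^{d^+}L_i(T)\,T^{2\frac{\log|\lambda_i|}{\log\lambda_1}}\,c_i\!\left(\Psi_n(\cdot,\theta)\right)\;+\;\mathcal{O}_n(T),
\]
where $d^+$ is the number of eigenvalues with $2\log|\lambda_i|/\log\lambda_1>1$, each $c_i$ is the corresponding linear functional of the deviation cocycle (intrinsic to the substitution, hence independent of $\mathcal{T}$ and of $g_1,g_2$), $L_i(T)=\mathcal{O}(\log(T)^{s-1})$ with $s$ the size of the Jordan block of $\lambda_i$, and the leading ($i=1$) term is $\mathrm{Vol}(B_T)\int_\Omega\Phi_n(\cdot,\theta)\, d\mu$. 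Integrating against $\cos\theta\, d\theta$, setting $C_i^n(g_1,g_2):=\int_{-\pi/2}^{\pi/2}c_i(\Psi_n(\cdot,\theta))\cos\theta\, d\theta$, and combining with the previous paragraph yields \eqref{eqn:SSapproxMixing}; the first term equals $\mathrm{Vol}(B_T)\langle h_1\circ f^n,h_2\rangle_{L^2_{\bar\mu}(\mathcal{D})}$ by the last identity of the reduction step, and primitivity of $\mathcal{R}$ guarantees $|\lambda_i|<\lambda_1$ for $i\ge 2$, so this leading term is unambiguous.

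\emph{Planar mixing and the main obstacle.} Dividing \eqref{eqn:SSapproxMixing} by $\mathrm{Vol}(B_T)$, which is of order $T^2$, and letting $T\to\infty$ kills every subleading term since $2\log|\lambda_i|/\log\lambda_1<2$ for $i\ge2$ while $L_i$ grows only polylogarithmically; the limit is $\langle h_1\circ f^n,h_2\rangle_{L^2_{\bar\mu}(\mathcal{D})}$. Letting $n\to\infty$ and using that $f$ is mixing --- a consequence of the K property, Theorem \ref{thm:Kproperty} --- together with $\bar\mu(h_j)=\beta(g_j)$ from Lemma \ref{lem:bump} gives $\beta(g_1)\beta(g_2)$, which is \eqref{eqn:leafMixing}. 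The one genuinely new ingredient, and the step I expect to be the main obstacle, is justifying the deviation expansion for the functions $\Psi_n(\cdot,\theta)$: the classical Bufetov--Solomyak theory is phrased for continuous or transversally locally constant functions, whereas $\Psi_n$ is only pattern-equivariant with jumps along the codimension-one scatterer boundaries, so one must quote the non-cohomological version of \cite{T:TTT} and verify that these jumps contribute only to the $\mathcal{O}_n(T)$ error, uniformly in $\theta$, in $x$, and in $\mathcal{T}\in\Omega$. The remaining bookkeeping --- tracking how the constants and the threshold ``$T$ large enough'' depend on $n$ through the radius $nM$ of the pattern controlling $F^n_\theta$, and checking that $\theta\mapsto\Psi_n(\cdot,\theta)$ varies in a way compatible with integrating the expansion --- is routine.
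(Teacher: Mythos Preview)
Your proposal is correct and follows essentially the same approach as the paper: reduce to the setup of Theorem~\ref{thm:generalMixing} and then replace the ergodic-theorem input by the Bufetov--Solomyak deviation expansion for ergodic integrals of pattern-equivariant functions to obtain the refined asymptotics~\eqref{eqn:SSapproxMixing}. Your write-up is in fact more detailed than the paper's, which simply points to \cite{bufetov-solomyak:tilings} for the rates; your explicit flagging of the discontinuity issue (and the need for a non-cohomological version as in \cite{T:TTT}) matches the concern the paper raises only in a footnote to Remark~\ref{rem:1}.
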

\begin{proof}[Proof of Theorem \ref{thm:generalMixing}]
Let $\mathcal{T}\in\Omega$ be a $\mu$-generic tiling
and $\mathcal{S}$ a 
$\mathcal{T}$-equivariant
collection of dispersive scatterers with $C^3$-boundary of category A with $\bar{0}\in \partial \mathcal{S}$. If $g_1,g_2\in \Delta_{\mathcal{T}}(\mathcal{S})$, then 
by Lemma \ref{lem:bump},
there exist transversally locally constant functions $h_1,h_2:\mathcal{D}\rightarrow \mathbb{R}$ such that: $g_i(v) = h_i\circ \varphi_{v}(\mathcal{T})$ for all 
$\theta\in (-\pi/2, \pi/2)$ and
$v\in\mathbb{R}^2$ such that $\varphi_{v}(\mathcal{T})\in\partial\mathcal{S}$, 
and $\beta(g_i)= \bar{\mu}(h_i)$. 
For any $n\in \mathbb{N}$, the function $g_1\circ F_\theta^n\cdot g_2$ is a locally integrable $\mathcal{T}$-equivariant function. In fact, it is the product of two locally integrable $\mathcal{T}$-equivariant functions $g_1\circ F_\theta^n$ and $g_2$, and, by construction, the associated transversally locally functions on $\mathcal{D}$ are $h_1\circ f^n$ and $h_2$.  



By the second part of Lemma \ref{lem:bump}, we have that
 \begin{equation}\label{eqn:betan}
   \begin{split}
      \beta(g_1\circ F_\theta^n\cdot g_2) & = \int_{-\pi/2}^{\pi/2}\int_{\partial \Sigma} h_1\circ f^n \cdot h_2 \, d\rho \, d\nu \, \cos\theta\, d\theta \\
      &= \langle h_1\circ f^n,h_2\rangle_{L^2_{\bar{\mu}}(\mathcal{D})},
    \end{split}
  \end{equation}



  Thus, for large $T$ we have that
  \begin{equation}
    \label{eqn:thetaAvg}
    \begin{split}
      &\int_{-\pi/2}^{\pi/2}\int_{B_T(x)\cap\partial\mathcal{S}}g_1\circ F_\theta^n\cdot g_2\, d\rho\, \cos\theta\, d\theta \\
      &\hspace{1.5in}= \mathrm{Vol}(B_T) \int_{-\pi/2}^{\pi/2}\int_{\partial \Sigma} h_1\circ f^n \cdot h_2 \, d\rho \, d\nu\cos\theta\, d\theta + o_n(T^2) \\
      &\hspace{1.5in}= \mathrm{Vol}(B_T) \langle h_1\circ f^n , h_2\rangle_{L^2_{\bar{\mu}}(\mathcal{D})} + o_n(T^2), 
    \end{split}
  \end{equation}
  proving (\ref{eqn:ApproxleafMixing}). The leafwise mixing result (\ref{eqn:leafMixing}) follows from (\ref{eqn:betan}) by applying Theorem \ref{thm:Kproperty}. Indeed we have


 $$   \lim_{n\to \infty}  \beta(g_1\circ F_\theta^n\cdot g_2) = \lim_{n\to\infty} \langle h_1\circ f^n,h_2\rangle_{L^2_{\bar{\mu}}(\mathcal{D})} =  \bar{\mu}(h_1) \bar{\mu}(h_2) = \beta(g_1)\beta(g_2).$$

\end{proof}
\begin{proof}[Proof of Theorem \ref{thm:SSmixing}]
  To go from (\ref{eqn:ApproxleafMixing}) to (\ref{eqn:SSapproxMixing}) we need to establish rates of convergence for ergodic integrals of locally integrable, $\mathcal{T}$-equivariant functions. This essentially is done in \cite{bufetov-solomyak:tilings}, giving error rates for ergodic integrals for locally integrable, $\mathcal{T}$-equivariant functions, giving (\ref{eqn:SSapproxMixing}) the same way that (\ref{eqn:thetaAvg}) was obtained.
\end{proof}
\section{Ergodicity of the flow}
\label{sec:ergFlow}
Theorem \ref{thm:Kproperty}, which in particular implies the ergodicity of the billiard map, also implies the ergodicity of the billiard flow. In this section we prove 
Theorem \ref{thm:erg}.

 Let $\mathcal{T}$ be a repetitive, aperiodic tiling of $\mathbb{R}^2$ of finite local complexity, $\mathcal{S}$ be a $\mathcal{T}$-equivariant collection of dispersive scatterers with $C^3$ boundary of category A, and $\mu$ an $\mathbb{R}^2$-invariant ergodic probability measure on $\Omega_\mathcal{T}$.    
Let $h:\Omega\times (-\pi/2,\pi/2)\rightarrow \mathbb{R}$ be a $L^1$ transversally locally constant function which defines for every $\mathcal{T}'\in\Omega$ a $g_{\mathcal{T}'}\in\mathfrak{G}_{\mathcal{T}'}$ by $g_{\mathcal{T}'}(z,\theta) =   h  (\varphi_{z}(\mathcal{T}'),\theta)$, where $\mathfrak{G}_\mathcal{T}$ is defined in (\ref{eqn:segmentNorm}).
By the relation between $\bar{\phi}_t(\mathcal{T}',\theta)$ and $\phi_t^{\mathcal{T'}}(\bar{0},\theta)$ described in Section \ref{subsec:model}, we have for all $t\in \mathbb{R}$ that 
\begin{equation}\label{eqn:relateflows}
h\circ \bar{\phi}_t(\mathcal{T}',\theta) = g_{\mathcal{T}'} \circ \phi_t^{\mathcal{T'}}(\bar{0},\theta).
\end{equation}

Consequently, defining a transversally locally constant function $\bar{H}: \mathcal{D} \to \mathbb{R}$ by
$$ \bar{H}(\mathcal{T}',\theta) :=   \int_0^{\tau(\mathcal{T}',\theta)}  h\circ \bar{\phi}_t(\mathcal{T}',\theta)   dt ,  $$
we obtain, recalling (\ref{eqn:segmentAvg}), that
$$   \bar{H}(\mathcal{T}',\theta) =   \int_0^{\tau(\mathcal{T}',\theta)}    g_{\mathcal{T}'} \circ \phi_t^{\mathcal{T'}}(\bar{0},\theta) dt = \bar{G}_{\mathcal{T}'}(0,\theta)  . $$

Observe that for any $z\in \partial \mathcal{S}'$ we can write
$$   \bar{G}_{\mathcal{T}'}(z,\theta) =    \bar{G}_{\varphi_{z}(\mathcal{T}')}(0,\theta) = \bar{H}(  \varphi_{z}(\mathcal{T}'),\theta),   $$
so that $\beta( \bar{G}_{\mathcal{T}'})=\bar{\mu}(\bar{H}).   $ Using (\ref{eqn:relateflows}) we obtain that for any $T>0$, up to an error of size $M\|g_{\mathcal{T}'}\|_{(1)}$,
\begin{equation*}
\begin{split} 
\int_{0}^{T}  g_{\mathcal{T}'} \circ \phi_t^{\mathcal{T'}}(\bar{0},\theta) dt &= \int_{0}^{T} h\circ \bar{\phi}_t(\mathcal{T}',\theta) dt =  \sum_{i=0}^{k(T)-1} \int_{0}^{\tau(f^i(\mathcal{T}',\theta))} h\circ \bar{\phi}_t(f^i(\mathcal{T}',\theta)) dt \\
&= \sum_{i=0}^{k(T)-1}\bar{H}(f^i(\mathcal{T}', \theta))
\end{split}
\end{equation*}
where $k(T)$ denotes the number of collisions of $(\mathcal{T}',\theta)$ with the boundary $\partial\Sigma$ in time $T$. 
 Finally, by the Birkhoff ergodic theorem applied to $\bar{H}$ and $\tau$, we obtain for $\bar{\mu}$-almost every $(\mathcal{T}',\theta)\in\partial\Sigma \times [-\pi/2,  \pi/2],$
 \begin{equation*}
   \begin{split}
     &\lim_{T\to \infty}  \frac{1}{T}\int_{0}^{T}  g_{\mathcal{T}'} \circ \phi_t^{\mathcal{T'}}(\bar{0},\theta) dt = \lim_{T\to \infty} \frac{1}{T}\int_{0}^{T} h\circ \bar{\phi}_t(\mathcal{T}',\theta) dt \\
     &\hspace{1in}=\lim_{T\to \infty} \frac{k(T)}{T} \frac{1}{k(T)}\sum_{i=0}^{k(T)-1}\bar{H}(f^i(\mathcal{T}', \theta))= \bar{\tau}^{-1} \bar{\mu}(\bar{H}) =\bar{\tau}^{-1} \beta( \bar{G}_{\mathcal{T}'}).
   \end{split}
 \end{equation*}

    \bibliographystyle{amsalpha}
\bibliography{biblio}
\end{document}